\documentclass[11pt,oneside,reqno,a4paper]{paper}
\usepackage[utf8]{inputenc}
\usepackage{amsmath,amsthm}
\usepackage{lmodern}
\usepackage[T1]{fontenc}        
\usepackage[numbers]{natbib}  

\usepackage{graphicx}
\usepackage{url}
\usepackage{amssymb}
\usepackage{mathtools}
\usepackage{bbold}
\usepackage[all]{xy}
\usepackage{xcolor}
\usepackage{mdframed}
\usepackage{makeidx} 
\usepackage{float}
\usepackage{xy}
\usepackage{placeins}

\usepackage{tabularx}
\newcolumntype{C}{>{\centering\arraybackslash}X}
\usepackage{hyphenat}
\usepackage{booktabs}
\usepackage{pdfpages}
\usepackage{array}
\usepackage{hyperref}
\usepackage{datetime}
\usepackage{enumitem}

\allowdisplaybreaks

\title{On the Convergence of the Mesh-Free Discretization of a Non-Linear Bond-Based Peridynamic Model with a Bond-Breakage Criterion}
\author{Lukas Pflug\thanks{FAU Competence Center Scientific Computing, Friedrich-Alexander University of Erlangen-Nuremberg, Martensstr. 5a, 91058 Erlangen, Germany}\and Michael Stingl\thanks{Department of Continuous Optimization, Friedrich-Alexander University of Erlangen-Nuremberg, Cauerstraße 11, 91058 Erlangen, Germany}\and Max Zetzmann\thanks{Department of Continuous Optimization, Friedrich-Alexander University of Erlangen-Nuremberg, Cauerstraße 11, 91058 Erlangen, Germany, Corresponding author. E-mail: max.zetzmann@fau.de}}
\date{October 2026}

\newcounter{remarkcnt}
\counterwithin{remarkcnt}{section}
\newcommand{\dd}{\ \mathbf{d}}

\newcommand{\qedwhite}{\hfill \ensuremath{\Box}}

\renewcommand{\exp}{\operatorname{exp}}
\newcommand{\diam}[0]{\operatorname{diam}}

\newcommand{\vol}[0]{\operatorname{vol}}

\newcommand{\id}[0]{\mathrm{id}}
\newcommand{\img}[0]{\operatorname{img}}

\newcommand{\scalarlaw}[0]{g}

\newcommand{\dist}[0]{\text{dist}}

\renewcommand{\b}[1]{\mathbf{#1}} 
\newcommand{\ca}[1]{\mathcal{#1}}

\newcommand{\supp}[0]{\text{ supp}}

\makeatletter
\renewcommand\paragraph{\@startsection{paragraph}{4}{\z@}%
{-3.25ex\@plus -1ex \@minus -.2ex} {1.5ex \@plus .2ex} {\normalfont\normalsize\bfseries}}
\makeatother

\newcommand{\Pk}[0]{\ca{P}_\kappa}
\newcommand{\Kk}[0]{\ca{K}_\kappa}
\newcommand{\Pkn}[0]{\ca{P}_{\kappa_n}}
\newcommand{\Kkf}[1]{\ca{K}_{\kappa_{#1}}}
\newcommand{\Map}[0]{\operatorname{Map}}
\newcommand{\Meas}[0]{\operatorname{Meas}}

\newcommand{\theoskip}[0]{\medskip}
\newcommand{\theopreskip}[0]{\medskip}
\newcommand{\IP}[0]{\operatorname{IP}}
\newcommand{\EOC}[0]{\mathrm{EOC}}

\DeclareMathOperator*{\esssup}{ess\,sup}

\newenvironment{theorem}[1][]{\refstepcounter{remarkcnt}\par\theopreskip\noindent\textbf{Theorem \theremarkcnt}
\if\relax\detokenize{#1}\relax
\else
(\textit{#1})
\fi}{\par\theoskip}

\newenvironment{lemma}[1][]{\refstepcounter{remarkcnt}\par\theopreskip\noindent\textbf{Lemma \theremarkcnt}
\if\relax\detokenize{#1}\relax
\else
(\textit{#1})
\fi}{\par\theoskip}

\newenvironment{corollary}[1][]{\refstepcounter{remarkcnt}\par\theopreskip\noindent\textbf{Corollary \theremarkcnt}
\if\relax\detokenize{#1}\relax
\else
(\textit{#1})
\fi}{\par\theoskip}

\newenvironment{proposition}[1][]{\refstepcounter{remarkcnt}\par\theopreskip\noindent\textbf{Proposition \theremarkcnt}
\if\relax\detokenize{#1}\relax
\else
(\textit{#1})
\fi}{\par\theoskip}

\newenvironment{assumptions}[1][]{\refstepcounter{remarkcnt}\par\theopreskip\noindent\textbf{Assumptions \theremarkcnt}
\if\relax\detokenize{#1}\relax
\else
(\textit{#1})
\fi}{\par\theoskip}

\newenvironment{remark}[1][]{\refstepcounter{remarkcnt}\par\theopreskip\noindent\textbf{Remark \theremarkcnt}
\if\relax\detokenize{#1}\relax
\else
(\textit{#1})
\fi}{\par\theoskip}

\newenvironment{definition}[1][]{\refstepcounter{remarkcnt}\par\theopreskip\noindent\textbf{Definition \theremarkcnt}
\if\relax\detokenize{#1}\relax
\else
(\textit{#1})
\fi}{\par\theoskip}

\newcommand{\Linftyptw}[0]{L^\infty_{\text{ptw}}}
\newcommand{\boundkerinteg}[0]{\mathcal{L}}
\newcommand{\boundkerdiscrinteg}[0]{\mathcal{L}^{\text{discr}}}

\begin{document}
\maketitle

\begin{abstract}
We show the convergence of a commonly used mesh-free discretization of a dynamic non-linear bond-based peridynamic model with irreversible damage implemented via a bond-breakage criterion for a fixed horizon $\delta >0$. The model chosen here is a mathematically well-posed regularization of a discontinuous peridynamic operator often found in the literature, which assumes that fracture results from strain in tension only. We explicitly include the one-point quadrature used in this scheme in the analysis. The convergence is with respect to the natural norm of the solution space given by $C^1([0,T],L^\infty(\Omega)^d)$, and therefore $L^\infty(\Omega)^d$ in the spatial coordinates. This result heavily constrains the spatial regularity of the continuous solution which we prove afterwards. More specifically, for locally $L$-Lipschitz continuous input data, local Lipschitz bounds are derived for the displacement, regardless of damage. This is consistent with interpreting the peridynamic model used here as an approximation to a truly discontinuous fracture model with a length-scale parameter given by the horizon $\delta>0$. We note that we do not deal with the limit of the solutions as $\delta \searrow 0$. 
\end{abstract}

\section{Introduction}
\label{sec:introduction}
Peridynamics is a relatively modern approach to modeling continuum mechanical problems that has gained significant traction in the mechanical and fracture mechanics communities, due to its inherent compatibility with discontinuities in the displacement and the ease with which fracture criteria can be implemented. Its formulation circumvents typical problems of the PDE formulation of continuum mechanics for discontinuous fracture problems, since it no longer requires spatial weak differentiability of the displacement. The essential difference to the classical theory is the replacement of the spatial differential operators by a spatially non-local integral operator.\par

First posed by Silling in \cite{SillRef}, adapted to fracture in \cite{SillMeshFree} and further developed in \cite{Silling2007}, peridynamics
has evolved into a family of related models \cite{CPDSOURCE,CPDFRAC,Partmann2023,lipton2015cohesive,lipton2020nonlocal,Lipton2019DynamicBF}, including correspondence formulations \cite{Silling2007, JAVILI2025104563,Javili2025}, which directly incorporate classical constitutive models (see e.g. \cite{Partmann2024} for a comparison of their wave-propagation behavior). These models have been applied extensively to fracture mechanical modeling and simulation, see e.g.\ \cite{Ha2010,KHOSRAVANI2022103899,Bobaru2015Cracks,Willberg2023}. A review of the validation of peridynamics against experimental results can be found in \cite{Diehl2019}.\par

The reformulation of the balance equations in terms of a non-local operator required a new mathematical approach to establishing well-posedness. For the linear case without damage, this has been carried out for both the bond-based and the state-based equations in \cite{Mengesha2014,Meng2014BB,OnWellPsdLinPD}, and it motivated the formulation of a non-local vector calculus that was subsequently used to analyze the equations further \cite{nonlocveccalclong}. For non-linear models without damage, existence and well-posedness results have been established for several model classes in \cite{Emmrich2013WellposednessOT,BellidoWellposed,EmmWeakMeasu,CocliteWellpsdNONLINPD}. Once damage is included, the available results become considerably sparser. Results for non-linear models with {reversible} damage include \cite{Lipton2019DynamicBF,lipton2015cohesive,lipton2020nonlocal}. In this paper we focus on a model with {irreversible} damage that was shown to be well-posed in \cite{WP_FRAC}. Other models with irreversible damage include \cite{bromley2026blendedapproachevolvingphase, Lipton2018}. The bond-based model studied in \cite{WP_FRAC} is derived from the prototypical bond-breakage model of \cite{SillMeshFree} by relaxing the damage factor and the force scalar to be Lipschitz continuous. Similar regularization techniques have been employed in \cite{Emmrich2016} to obtain well-posedness for a more regular class of pairwise force functions $\b{f}(\zeta, \eta)$ continuous in $\eta$, wherein further, the damage variable was an integral of a shifted bond-stretch over time instead of a maximum as used in \cite{WP_FRAC, SillMeshFree}. 
\par

On the numerical side, the popular mesh-free discretization stemming from \cite{SillMeshFree} has been investigated numerically in e.g.\ \cite{ConvStudiesInMeshFree,BobarFlorConv1DPD,ImprovOnePointQuadAlgs}; this list is by no means exhaustive. Rigorous convergence proofs, however, are available only for a comparatively narrow set of models. In \cite{GuanGunzNonlocStabAcc}, the stability and accuracy of a fully discrete approximation of a time-dependent, {one-dimensional} linear non-local wave-equation was analyzed. Piecewise-linear finite elements were used for the spatial discretization, and convergence was shown under stability conditions of the fully discrete Newmark scheme. For linear peridynamic bond-based models without damage, convergence has been established for general Galerkin-Ritz schemes in \cite{DuConv}, deriving rates for specific approximations. The analysis is restricted to simple domains $\Omega$ and further characterizes the regularity of the solutions relative to the regularity of the kernel, defining suitable function spaces using Fourier analysis. A related line of work concerns the joint limit of the discretization parameter $\kappa\searrow 0$ and the horizon $\delta\searrow 0$: \cite{TianDuAnalysisApprox} studies and compares several discretization methods for a linear one-dimensional equation without damage deriving criteria for when different schemes converge and when they do not, both to the continuous non-local solution in the discretization limit $\kappa\searrow 0$ and to the local solution in the joint limit $\kappa = \delta /r \searrow 0$ for some $r>1$. Further,  \cite{XiaoQiangAsymptCompatRobDiscr} develops a general framework for asymptotically compatible
discretizations of parametrized linear nonlocal problems, including state-based peridynamics without damage. For linear models without damage, \cite{CHEN20111237} compares continuous and discontinuous finite element approximations of the static one-dimensional linear model for both smooth and discontinuous displacements. Treatment of the mesh-free discretization of the static equations for a linear state-based model with only weak assumptions on the input data, allowing discontinuities, is given in \cite{pflug2026convergencediscretizedlinearstatic}.\par
For non-linear models, convergence results have been obtained in \cite{FinElemApproxNonLoc,liphabbcohesivenumerical,JhaNumericalCO} for the models of \cite{lipton2015cohesive,lipton2020nonlocal,Lipton2019DynamicBF}. These results come with convergence rates, but the underlying models do not include irreversible damage{, and \cite{FinElemApproxNonLoc} treats a finite element rather than a mesh-free discretization}. Conversely, irreversible damage was incorporated within the family of models given in \cite{lipton2015cohesive,lipton2020nonlocal,Lipton2019DynamicBF}
in \cite{COSKUN2025106302, bromley2026blendedapproachevolvingphase}, which has the further advantages of properly accounting for compression and of modeling additional effects such as hysteresis. A rigorous analysis of the corresponding mesh-free discretization is, however, to the best of our knowledge, still open. \par
Furthermore, with the exception of \cite{pflug2026convergencediscretizedlinearstatic}, none of the results above using a piecewise constant approximation address the one-point quadrature that is actually used in mesh-free implementations. \par

Overall, rigorous work on non-linear models with irreversible damage is still incomplete. The contribution of this paper is a rigorous convergence analysis for the model of \cite{WP_FRAC}, in two and three space dimensions, including a result about the regularity of the continuous solution relative to the regularity of the input data. The latter result is closely related to the numerical convergence. This model was chosen because of its simplicity as well as its proximity to the original formulation in \cite{SillMeshFree} which has inspired many practically used models. Building on the well-posedness theory of \cite{WP_FRAC}, we prove convergence of the mesh-free discretization, including the one-point quadrature used in practice, first for the semi-discrete scheme (Theorem \ref{theo:convsemidiscr}) and then for the fully discrete scheme obtained with a central difference time discretization (Theorem \ref{theo:fullconv}). Convergence is obtained in $C^1([0,T],L^\infty(\Omega)^d)$, the norm naturally inherited from the well-posedness theory. To the best of our knowledge, this has not been shown before for the non-linear model with irreversible damage of \cite{WP_FRAC}. In contrast to \cite{liphabbcohesivenumerical,JhaNumericalCO}, which derive rates in $L^2(\Omega)^d$ under the assumption of H\"older continuous solutions, our convergence result requires no regularity of the solution and consequently provides no rate in general. For suitable (e.g. Lipschitz-continuous) input data, we obtain a rate of $O(\kappa + 1/N)$.\par 

The convergence of a piecewise constant  approximation scheme in $C^0([0,T],L^\infty(\Omega)^d)$ raises the question of how regular the solution of the continuous problem actually is in the presence of significant damage. We address this directly in Proposition \ref{prop:contcontsol}, where we show that the displacement inherits continuity and local Lipschitz continuity from the input data, with local Lipschitz bounds that depend explicitly on $\delta>0$ and diverge as $\delta\searrow 0$. The result follows from the regularity of the underlying non-local operator and has the consequence that, at fixed $\delta>0$, damage cannot create discontinuities that are not already present in the input data. This implies that damage in zones where the input data is regular must manifest through a steep slope instead of a jump discontinuity. This suggests interpreting the peridynamic model at fixed $\delta$ as a regular approximation of a sharp free-fracture problem, an interpretation consistent with more recent peridynamic-type models \cite{COSKUN2025106302,bromley2026blendedapproachevolvingphase} that are viewed as blended, phase-field-like approaches in which the damage field evolves directly through the constitutive equations. We emphasize that this remains an interpretation: identifying a limit of the solutions as $\delta\searrow 0$ is open for the present model.\par

Throughout, we are interested exclusively in the limit $\kappa\searrow 0$ ($m$-convergence in the terms of \cite{BobarFlorConv1DPD}) of the discretization parameter at fixed horizon $\delta>0$. The limit $\delta\searrow 0$ and the joint limit with $\kappa \searrow 0$ are outside the scope of this paper. The limit $\delta\searrow 0$ is of interest in both the mechanical and the mathematical community and has been addressed in \cite{SillConv} for smooth motions. Numerical work has been done in \cite{BobarFlorConv1DPD}, results on the convergence of solutions in an $L^2(\Omega)^d$ setting in \cite{Mengesha2014,lipton2015cohesive}, and work on the $\Gamma$-convergence of the corresponding energies is found in \cite{Bellido2015,Bellido2020BondBasedPD}, where the latter shows that bond-based models do not converge to most  hyperelastic energies under the assumption of frame-indifference and isotropy.

\vspace{0.33cm}

The paper is organized as follows. In Section \ref{sec:ContBondBreakage} we introduce the non-linear model with damage studied here and summarize the well-posedness results of \cite{WP_FRAC}. In Section \ref{sec:SemiDiscrSystem} we apply the mesh-free discretization scheme of \cite{SillMeshFree}, including quadrature, and prove convergence of the semi-discrete scheme in Theorem \ref{theo:convsemidiscr}. Section \ref{sec:ConvFullyDiscr} describes the fully discrete system obtained with a central difference scheme and establishes its convergence in Theorem \ref{theo:fullconv}. For suitable input data (e.g. Lipschitz-continuous) we derive an asymptotic  rate of $O(\kappa + 1/N)$ where $\kappa$ is the spatial discretization parameter and $N$ the number of time steps. Section \ref{sec:PropContSol} is devoted to the regularity of solutions of the continuous problem and contains Proposition \ref{prop:contcontsol}. An example problem is chosen and numerically studied in Section \ref{sec:numerics}, visualizing the results of Sections \ref{sec:ConvFullyDiscr} and \ref{sec:PropContSol}. Section \ref{sec:HomogDirich} extends the analysis to Dirichlet boundary conditions. The proofs are nearly identical to those in the prior sections, and are therefore omitted.

\section{The Continuous Bond-Breakage Model}
\label{sec:ContBondBreakage}
The model studied in this paper was defined in \cite{WP_FRAC}, which proved the well-posedness of the equations on $C^2([0,T],L^\infty(\Omega)^d)$. The model is a Lipschitz-relaxation of the popular bond-breakage criterion model described in \cite{SillMeshFree}. Note that the original operator as defined in \cite{SillMeshFree} is not continuous, making the well-posedness theory and numerical treatment difficult without a suitable relaxation. \par
To state the underlying equations as given by (\ref{eqn:conteq}) below, we first define their constituents. \par 
In this paper we adopt the notational convention to boldface time dependent functions, e.g. $\b{u}\in C^0([0,T], L^\infty(\Omega)^d)$, while leaving functions without time dependency, e.g. $u\in L^\infty(\Omega)^d$, in roman type to distinguish where the non-locality in time will be relevant. Furthermore, we denote the finite dimensional $p$-norm as $\|\cdot\|_p$.

\begin{definition}[Definitions for the Continuous Equations]\label{def:constcont}
\begin{enumerate}
    \item Let $\Omega \subset \mathbb{R}^d$ be an open and bounded domain with ${d}\in\{2,3\}$.
    \item Define the bond stretch function $\ca{S}: \Map(\Omega, \mathbb{R}^d)\times \Omega\times\Omega \rightarrow \mathbb{R}$, defined by
    \begin{align*}
         \ca{S}({u}, x',x) = \frac{\|{u}(x')-{u}(x) + x'-x\|_2}{\|x'-x\|_2} -1.
    \end{align*}
    \item For a given displacement $u\in L^\infty(\Omega)^d$ at a time $t$ and a given $x\in\Omega$, we will call a bond $x'-x$ with $x'\in\Omega$ \textit{sub-critical} if $\ca{S}(u,x',x) < S^-_1$. Note that this includes the degenerate case $\|{u}(x')-{u}(x)+x'-x\|_2 = 0$, corresponding to the motion mapping the points $x',x\in \Omega$ onto the same image.
    \item Define the direction function for the motion $e: \Map(\Omega, \mathbb{R}^d)\times \Omega\times\Omega \rightarrow \mathbb{R}^d$, given by
    \begin{align*}
        e({u}, x',x) = \frac{{u}(x')-{u}(x)+x'-x}{\|{u}(x')-{u}(x)+x'-x\|_2}
    \end{align*}
    if the bond $x'-x$ at $x$ is not degenerate and $0$ otherwise.
    \item A horizon $\delta>0$ and a measurable weighting function $\omega_\delta: (0,\infty) \rightarrow \mathbb{R}_{\geq 0}$, fulfilling
    \begin{equation}
        \label{eqn:weightingfunctionintegcond}
        \supp(\omega_\delta(\|\cdot\|_2)) \subset B_\delta(0), \ \ \ca{L} :=\int_{\mathbb{R}^d} \frac{\omega_\delta(\|\zeta\|_2)}{\|\zeta\|_2}\dd \zeta<\infty.
    \end{equation}
    \item A terminal time $0 <T <\infty$.
    \item Constants $-1<S^-_1<S^-_0<0<S^+_0<S^+_1<\infty$, and the relaxed force scalar and damage function,
    \begin{equation}
    \label{eqn:rfsbbfdef}
        \scalarlaw(s) = \begin{cases}
            0 & s < S^-_1 \\
            S^-_0\tfrac{s-S^-_1}{S^-_0 - S^-_1} & S^-_1 \leq s\leq S^-_0 \\
            s & S^-_0 \leq s \leq S^+_0\\
            S^+_0\tfrac{S^+_1-s}{S^+_1 - S^+_0} & S^+_0 < s < S^+_1\\
            0 & S^+_1 \leq s
        \end{cases},\hspace{0.5cm} \mu(s) = \begin{cases}
            1 & s \leq S^+_0 \\
            \tfrac{S^+_1-s}{S^+_1 - S^+_0} & S^+_0 < s < S^+_1\\
            0 & s \geq S^+_1.
        \end{cases}
    \end{equation}
    Note that both these functions are Lipschitz continuous with constant \\
    $\ell := \max\Big(1, \tfrac{\max(1, S^+_0)}{S^+_1-S^+_0}, \tfrac{|S^-_0|}{S^-_0-S^-_1}\Big)>0$. We further define $\scalarlaw^+:= \max_{s\in \mathbb{R}} |\scalarlaw(s)| = \max (|S^+_0|, |S^-_0|)$
    \item The force density $\b{f}: \Omega \times \Omega \times C^0([-T,0], L^\infty(\Omega)^d) \rightarrow \mathbb{R}^d$ defined for almost all $x$ and almost all $x'$, by
    \begin{equation}\label{eqn:defcontfinteg}
        \b{f}(x', x, \b{u}) = \omega_\delta(\|x'-x\|_2)\scalarlaw(\ca{S}(\b{u}(0), x', x))\mu\Big(\esssup_{-T\leq s\leq 0}\ca{S}([\Gamma\b{u}](s), x', x) \Big)e(\b{u}(0), x',x),
    \end{equation}
    where the essential supremum over $s\in [-T, 0]$ is evaluated over a jointly measurable representative $\Gamma\b{u}\in L^\infty([-T,0]\times \Omega)^d$ of $\b{u} \in C^0([-T,0], L^\infty(\Omega)^d)$ (see Remark \ref{rm:jointmesrepr}). 
    \item The above definitions then culminate in the non-local peridynamic operator\\ $F: C^0([-T, 0], L^\infty(\Omega)^d) \rightarrow L^\infty(\Omega)^d$ given by 
    \begin{equation}\label{eqn:defcontF}
        F(\b{u})(x) = \int_{\Omega} \b{f}(x',x, \b{u}) \dd x'.
    \end{equation}
    Note that by construction, $\scalarlaw(\ca{S}(u,x',x))=0$ for sub-critical bonds $x'-x$. Most specifically, $F(\b{u})$ does not depend on the value of $e$ in degenerate bonds. We will make use of this degree of freedom in many proofs later on.
\end{enumerate}
\end{definition}

\begin{remark}[Jointly Measurable Representatives]\label{rm:jointmesrepr}
    For every $\b{u} \in C^0([-T,0], L^\infty(\Omega)^d)$ we construct a $\Gamma \b{u}\in L^\infty([-T,0]\times\Omega)^d$ such that $[\Gamma \b{u}](t, \cdot) = \b{u}(t)$ a.e. $t\in [-T,0]$ in $L^\infty(\Omega)^d$. This is done such that the following diagram commutes, where unlabeled arrows represent canonical inclusions,
\begin{equation*}
    \xymatrix{
        C^0([-T,0], L^\infty(\Omega)^d) \ar@{_{(}->}[d]\ar@{_{(}->}[r]^{\Gamma} & L^\infty([-T,0] \times \Omega)^d  \ar@{_{(}->}[dd]\\
        C^0([-T,0], L^2(\Omega)^d) \ar@{_{(}->}[d] \\
        L^2([-T,0], L^2(\Omega)^d) \ar@{_{(}->>}[r]^{\Phi} & L^2([-T,0]\times \Omega)^d.
    }
\end{equation*}
The map $\Phi :L^2([-T,0], L^2(\Omega)^d) \simeq L^2([-T,0]\times \Omega)^d$ is constructed in \cite{anainbanachsp}, Proposition 1.2.24, and fulfills $[\Gamma \b{u}](t,\cdot) = \b{u}(t)$ for almost all $t\in [-T,0]$ in $L^2(\Omega)^d$ for any $\b{u}\in L^2([-T,0], L^2(\Omega)^d)$.\par
This in particular implies for a given $\b{u}\in C^0([-T,0], L^\infty(\Omega)^d)$, 
\begin{align*}
    \|\Gamma \b{u}\|_{L^\infty([-T,0]\times \Omega)^d}& = \esssup_{t,x}  \|[\Gamma \b{u}](t,x)\|_\infty\\
    &= \esssup_{x} \esssup_{t} \|[\Gamma \b{u}](t,x)\|_\infty  \\
    &= \esssup_{t} \esssup_{x} \|[\Gamma \b{u}](t,x)\|_\infty  \\
    &= \esssup_{t} \|[\Gamma \b{u}](t,\cdot)\|_{L^\infty(\Omega)^d } \\
    &= \esssup_{t} \|\b{u}(t)\|_{L^\infty(\Omega)^d} = \|\b{u}\|_{C^0([-T,0], L^\infty(\Omega)^d)}.  
\end{align*}
\end{remark}

To state the corresponding equations, we further introduce the notation $\b{a}_t$ for a $t \in [0,T]$ and a function $\b{a}:[0,T] \rightarrow X$ for any Banach space $X$, by defining
\begin{equation*}
    \b{a}_t : [-T, 0] \rightarrow X,\ s\mapsto \begin{cases}
        \b{a}(0) & s \leq -t \\
        \b{a}(s+t) & \text{else}.
    \end{cases}
\end{equation*}
The peridynamic problems studied in this paper are then given for $\b{u} \in C^2([0,T], L^\infty(\Omega)^d)$, by 
\begin{equation}
    \label{eqn:conteq}
    \begin{rcases}
        \rho\ddot{\b{u}}(t) = F(\b{u}_t) + \b{b}(t)\\ 
        \b{u}(0) = u_0 \\
        \dot{\b{u}}(0) = v_0
    \end{rcases}
\end{equation}
for the initial values $u_0, v_0 \in L^\infty(\Omega)^d$. The external body forces $\b{b}$ are assumed to be in $C^0([0,T], L^\infty(\Omega)^d)$. For simplicity, we also assume that the density $\rho$ is constant, without loss of generality, we assume $\rho \equiv 1$ everywhere.

\begin{remark}[About the Relaxation]\label{rm:relaxation} The functions $\scalarlaw$ and $\mu$ as defined in (\ref{eqn:rfsbbfdef}) are Lipschitz relaxations introduced in \cite{WP_FRAC} of the functions 
\begin{equation*}
    \scalarlaw(x) \equiv x \chi_{[-1, S^+_1]}(x), \ \ \mu(x) = \chi_{(-\infty, S^+_1)}(x)
\end{equation*}
as used in \cite{SillMeshFree}. The force scalar $\scalarlaw$ was introduced in \cite{WP_FRAC} to control the interaction of particles $x,x'$ with bond-stretch near $S_1^-$. More specifically, the cutoff of the force scalar $\scalarlaw$ at $S^-_1>-1$ is essential to the well-posedness proof given in \cite{WP_FRAC} as well as to the results stated in this paper. In total, the relaxation changes the original model in \cite{SillMeshFree} in two ways.
\begin{enumerate}
    \item The relaxation of $\mu$ and $\scalarlaw$ at $S_1^+$, $S_0^+$: This can also be interpreted mechanically as allowing the bond to weaken first before breaking, as noted in \cite{WP_FRAC, Emmrich2016}. \newline
    {The concept of allowing a continuous weakening before breaking is not new and has been noted in \cite{NIAZI2021102855} to be desirable for controlling the influence of the horizon on the strength and fracture energy of models without pre-cracks as given in this paper.}
    \item The relaxation of $\scalarlaw$ at $S_1^-$, $S_0^-$: This cutoff introduces physically undesirable behavior, weakening bond interactions if bonds are compressed beyond a stretch of $S^-_0$, making them reversibly fail below $S^-_1$. Note that by definition, $\ca{S}(u,x',x) \geq -1$ for all $x,x'\in\Omega$ and displacements $u$ and that $S^-_1>-1$ can be chosen arbitrarily close to $-1$. \newline
The fact that such a cutoff has to be introduced explicitly in the first place is a consequence of the physically undesirable behavior of the model under compression. The strain is capped at $-1$, limiting the material's ability to resist compression. Furthermore, the equations are such that no damage can occur from it. In practice this is mitigated either by penalization terms in the form of short-range forces counteracting inter-penetration \cite{SillMeshFree, MACEK20071169, PARKS2008777} acting on particles whose distance fall beneath a given threshold, or by reformulations of the bond strain itself \cite{TupekRadovitzky2014} solving the issue on the level of the continuous equations. Both remedies are constitutive rather than analytical in nature, and a consolidated treatment within the present class of models is, to our knowledge, not available. Therefore, we regard the question as outside the scope of this paper. \newline
We note that the necessity of such a modification is already visible from a mathematical view in the well-posedness theory of the model used here in \cite{WP_FRAC}. The cutoff of $\scalarlaw$ below $S^-_1$ is not merely a modeling convenience, rather, since the deformed bond direction degenerates as $\ca{S} \searrow -1$, with derivative of order $\|x'-x+u(x')-u(x)\|^{-1}$, a force scalar that does not vanish in this limit causes the proofs for the peridynamic operator to be Lipschitz continuous in the displacement $\b{u}$ as given here to fail, as the change of the direction of the bond-forces $e$ no longer linearly correlates with the norm of the displacement. Following this interpretation, the mathematical analysis of the bond-breakage model employed here senses that there is an issue with the constitutive behavior in compression from a purely formal analysis.
    
\end{enumerate}
We note that we are not investigating the limit as $S^-_0,S^-_1 \searrow -1$ or $S^+_0 \rightarrow S^+_1$ and instead view the relaxed model not as an approximation but as an alternative to the steep cutoff model used in \cite{SillMeshFree}. Again, use of such models with a continuous cutoff in $\mu$ have been found to be useful in practice by themselves, e.g.  \cite{NIAZI2021102855}. 
\end{remark}

We summarize the well-posedness of the problem (\ref{eqn:conteq}) in the following theorem taken from \cite{WP_FRAC}, Theorem 3. 

\begin{theorem}[Well-Posedness of the Continuous Equations]
    Let $u_0, v_0 \in L^\infty(\Omega)^d$, $\rho>0$, $\b{b} \in C^0([0,T], L^\infty(\Omega)^d)$ and $\omega_\delta: (0,\infty) \rightarrow \mathbb{R}_{\geq 0}$ be a weighting function fulfilling (\ref{eqn:weightingfunctionintegcond}) for some $\delta >0$. Then, the problem as posed in (\ref{eqn:conteq}) has a unique solution $\b{u} \in C^2([0,T], L^\infty(\Omega)^d)$.
\end{theorem}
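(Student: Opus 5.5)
The plan is to recast (\ref{eqn:conteq}) as a fixed-point problem for a history-dependent Volterra-type integral equation and to apply Banach's fixed point theorem in a weighted norm. Integrating twice in time (with $\rho\equiv 1$), a function $\b{u}\in C^2([0,T],L^\infty(\Omega)^d)$ solves (\ref{eqn:conteq}) if and only if $\b{u}\in C^0([0,T],L^\infty(\Omega)^d)$ is a fixed point of
\begin{equation*}
    [\Phi\b{u}](t) = u_0 + t v_0 + \int_0^t (t-s)\big(F(\b{u}_s)+\b{b}(s)\big)\dd s.
\end{equation*}
Two preliminary facts are needed. First, $t\mapsto \b{u}_t$ is continuous from $[0,T]$ into $C^0([-T,0],L^\infty(\Omega)^d)$; this follows from the uniform continuity of $\b{u}$ on $[0,T]$. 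Second, $F$ is well defined and globally Lipschitz with respect to the history in the following causal sense: there is $C>0$ such that
\begin{equation*}
    \|F(\b{u}_t)-F(\b{w}_t)\|_{L^\infty} \le C\sup_{0\le s\le t}\|\b{u}(s)-\b{w}(s)\|_{L^\infty}.
\end{equation*}
Once these hold, $\Phi$ maps $C^0$ into $C^2$. With the Bielecki norm $\sup_t e^{-\lambda t}\|\b{u}(t)\|_{L^\infty}$ and $\lambda$ large, $\Phi$ becomes a contraction. This gives existence and uniqueness on all of $[0,T]$ directly, with no need to worry about blow-up, because the Lipschitz bound is global.

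Measurability and boundedness of $F$ come first. The integrand $\b{f}$ is bounded by $\omega_\delta(\|x'-x\|_2)\,\scalarlaw^+$, so $\|F(\b{u})\|_{L^\infty}\le \scalarlaw^+\int\omega_\delta$, which is finite since $\omega_\delta(r)\le \delta\,\omega_\delta(r)/r$ on the support. The jointly measurable representative $\Gamma\b{u}$ of Remark \ref{rm:jointmesrepr} makes $(x,x')\mapsto\esssup_s\ca{S}([\Gamma\b{u}](s),x',x)$ measurable. The damage term is handled with $\mu$ Lipschitz, using the bound
\begin{equation*}
    |\esssup_s a(s)-\esssup_s b(s)|\le\esssup_s|a(s)-b(s)|.
\end{equation*}
The reverse triangle inequality gives
\begin{equation*}
    |\ca{S}(u,x',x)-\ca{S}(w,x',x)|\le \frac{2\|u-w\|_{L^\infty}}{\|x'-x\|_2}
\end{equation*}
for a.e.\ pair. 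Integrated against $\omega_\delta$, this produces the constant $\ca{L}$ from (\ref{eqn:weightingfunctionintegcond}). This part of the argument covers the $\scalarlaw$ and $\mu$ factors.

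The main obstacle is the direction factor $e$. It is not Lipschitz near degenerate bonds, where the derivative scales like $\|u(x')-u(x)+x'-x\|_2^{-1}$. I will write the product difference telescopically, as differences in $\scalarlaw$, in $\mu$ and in $e$, and then handle the $e$-term $\scalarlaw(\ca{S}(u))\,[e(u)-e(w)]$ by cases.

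If both bonds are sub-critical, $\scalarlaw$ vanishes for both, so the term is zero. If $\ca{S}(u,x',x)\ge S^-_1$, then $\|u(x')-u(x)+x'-x\|_2\ge(1+S^-_1)\|x'-x\|_2$. The elementary estimate
\begin{equation*}
    \Big\|\frac{a}{\|a\|}-\frac{b}{\|b\|}\Big\|\le\frac{2\|a-b\|}{\max(\|a\|,\|b\|)}
\end{equation*}
then yields $\|e(u)-e(w)\|\le 4\|u-w\|_{L^\infty}/((1+S^-_1)\|x'-x\|_2)$. The symmetric case is handled by swapping the roles of $u$ and $w$ (choosing which function carries the $\scalarlaw$-difference), together with the freedom in defining $e$ on degenerate bonds. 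Here the cutoff $S^-_1>-1$ is essential. Integrating again gives a bound of order $\ell\,\ca{L}\,\|u-w\|_{L^\infty}/(1+S^-_1)$. With $C$ in hand, the Bielecki contraction estimate is routine:
\begin{equation*}
    e^{-\lambda t}\|\Phi\b{u}(t)-\Phi\b{w}(t)\|\le \frac{CT}{\lambda}\|\b{u}-\b{w}\|_\lambda.
\end{equation*}
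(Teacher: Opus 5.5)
Your proposal is correct and follows essentially the same route as the paper, which defers to \cite{WP_FRAC}: you prove a global Lipschitz bound for $F$ in the history norm by telescoping the $\scalarlaw$-, $\mu$- and $e$-differences, use the cutoff $S^-_1>-1$ to control the direction term, and then run a fixed-point argument on the integral form. The differences are cosmetic. You use a Bielecki-weighted contraction on the second-order Volterra equation in place of Picard iterates with $t^n/n!$ bounds on the order-reduced system. And since the $e$-term carries $\scalarlaw(\ca{S}(u))$, your estimate $\bigl\|\tfrac{a}{\|a\|}-\tfrac{b}{\|b\|}\bigr\|\le \tfrac{2\|a-b\|}{\|a\|}$ (valid also for $b=0$ with $e=0$) makes both your ``symmetric case'' and the redefinition of $e$ on sub-critical bonds used in Proposition \ref{prop:LScontdiscrrhs} unnecessary.
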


\section{The Semi-Discrete System}
\label{sec:SemiDiscrSystem}
This section will describe the spatial discretization of the continuous model described in Section \ref{sec:ContBondBreakage}. We use a mesh-free approach including one-point quadrature for the semi-discrete system, realized by the following discrete analogs of the terms in Definition \ref{def:constcont}. 

\begin{definition}[Constituents of the Discrete Equation]\label{def:constdisc}
\begin{enumerate}
    \item We construct the points with respect to which we want to formulate the mesh-free discrete problems as well as perform one-point quadrature.
    \begin{enumerate}
        \item We subdivide $\mathbb{R}^d$ for $d\in\{2,3\}$ into cubes/squares $(B_i)_{i\in \mathbb{Z}^d}$ of equal side length $\kappa>0$, with centers $x_i = \kappa i + \vec{o}_\kappa$ with $i \in \mathbb{Z}^d$ and some $\vec{o}_\kappa \in \mathbb{R}^d$. 
        \item We define $\Omega_i := \Omega \cap B_i$ for $i\in \mathbb{Z}^d$ if $\vol(\Omega \cap B_i)> 0$, and define the resulting index set of all such $i$ as $I^\kappa$.
        \item For notational simplicity we define $V_i := \vol(\Omega_i)$.
    \end{enumerate}
    \item The discrete bond stretch function for $i\not=j$, given by $\ca{S}^{ij}: (\mathbb{R}^d)^{I_\kappa}\rightarrow \mathbb{R}$ with $\ca{S}^{ij}(\vec{u}^\kappa) = \tfrac{\|\vec{u}^\kappa_j-\vec{u}^\kappa_i+x_j-x_i\|_2}{\|x_j-x_i\|_2}-1$.
    \item The discrete direction function $e^{ij}: (\mathbb{R}^d)^{I_\kappa}\rightarrow \mathbb{R}^d$ with $e^{ij}(\vec{u}^\kappa) = \tfrac{\vec{u}^\kappa_j-\vec{u}^\kappa_i+x_j-x_i}{\|\vec{u}^\kappa_j-\vec{u}^\kappa_i+x_j-x_i\|_2}$ if $\vec{u}_j^\kappa -\vec{u}_i^\kappa -  x_i + x_j \not=0$ and $0$ otherwise. Again, we call a bond $\zeta^{ij} = x_j-x_i$ at $x_i$ for a given discrete displacement $\vec{u}^\kappa$ and base point $x_i$ sub-critical if $\ca{S}^{ij}(\vec{u}^\kappa)< S^-_1$ and degenerate if $\ca{S}^{ij}(\vec{u}^\kappa) = -1$. 
    \item We introduce the function space $L^{\infty}_{\mathrm{ptw}}(A, B)$ for measurable Banach spaces $A$ and $B$, by defining 
\begin{equation*}
    L^{\infty}_{\mathrm{ptw}}(A, B) := \{ f: A \rightarrow B : f \text{ measurable}, \|f\|_{\text{sup}} := \sup_{x\in A}\|f(x)\|_B < \infty\},
\end{equation*}
with the $\|\cdot\|_{\text{sup}}$ norm.  This norm turns the space $L^\infty_{\mathrm{ptw}}$ into a Banach space.\par

    \item The discrete bond density function $\b{f}^{ij}: \Linftyptw([-T,0], (\mathbb{R}^d)^{I^\kappa})\rightarrow \mathbb{R}^d$ and the corresponding discrete non-local operator $\vec{F}_\kappa: \Linftyptw([-T, 0], (\mathbb{R}^d)^{I_\kappa}) \rightarrow (\mathbb{R}^d)^{I_\kappa}$
    \begin{equation*}
        (\vec{F}_\kappa(\vec{\b{u}}^\kappa))_i = \sum_{i\not=j} V_j\underbrace{\omega_\delta(\|x_j-x_i\|_2)\scalarlaw(\ca{S}^{ij}(\vec{\b{u}}^\kappa(0)))\mu\Big(\sup_{-T\leq s\leq 0}\ca{S}^{ij}(\vec{\b{u}}^\kappa(s))\Big)e^{ij}(\vec{\b{u}}^\kappa(0))}_{=:\vec{\b{f}}^{ij}(\vec{\b{u}}^\kappa)}.
    \end{equation*}
    Again, the value of $\vec{F}_\kappa(\vec{\b{u}}^\kappa)$ does not depend on the value of $e^{ij}$ for sub-critical bonds including degenerate bonds due to $\scalarlaw(\ca{S}^{ij}(\vec{\b{u}}^\kappa(0))) = 0$. We will use this degree of freedom in the proofs below to eliminate the problem of degenerate and sub-critical bonds appearing in the discrete formulation.
    \end{enumerate}
\end{definition}
Combining the above yields the semi-discrete problem, given by
\begin{equation}
    \label{eqn:semdiscrprobl}
    \begin{rcases}
        \tfrac{d^2}{dt^2}\vec{\b{u}}^\kappa(t) = \vec{F}_\kappa((\vec{\b{u}}^\kappa)_t) + \vec{\b{b}}_\kappa(t) \\
        \ \ \vec{\b{u}}^\kappa(0) = \vec{u}_0^\kappa\\
        \tfrac{d}{dt}\vec{\b{u}}^\kappa(0) = \vec{v}_0^\kappa
    \end{rcases}
\end{equation}
with the discrete initial values $\vec{u}_0^\kappa,  \vec{v}_0^\kappa \in (\mathbb{R}^d)^{I_\kappa}$ and body force $\vec{\b{b}}_\kappa \in C^0([0,T], (\mathbb{R}^d)^{I_\kappa})$. \par\medskip
The structure of the remainder of this section is as follows. We will first prove the well-posedness of the discrete systems by following the same strategy as in the continuous case. Then, we will prove the convergence of the semi-discrete solutions $\vec{\b{u}}^\kappa$ to the continuous solution in Theorem \ref{theo:convsemidiscr}. \par \medskip
For the following analysis, we require the operators representing finite dimensional discrete data in the continuum and vice versa.
\begin{definition}[Piecewise Constant Embedding and Projection Operators]
\begin{align*}
    &\Kk: (\mathbb{R}^d)^{I^\kappa} \rightarrow L^\infty(\Omega)^d, \ \ \vec{u} \mapsto (x\mapsto \sum_{i\in I^\kappa} \vec{u}_i \chi_{\Omega_i}(x) ), \\
    &\Pk:  L^\infty(\Omega)^d \rightarrow (\mathbb{R}^d)^{I^\kappa}, {u}\mapsto \Big(\frac{1}{V_i} \int_{\Omega_i} {u}(x) \dd x\Big)_{i\in I^\kappa}
\end{align*}
\end{definition}

To obtain the convergence results, we need the assumptions below.

\begin{assumptions}[Assumptions for the Convergence]\label{ass:numconv}
    \begin{enumerate}
        \item $\omega_\delta: (0,\infty) \rightarrow \mathbb{R}_{\geq 0}$ measurable, non-increasing, vanishing for arguments greater than $\delta>0$, as well as the integrability condition,
        \begin{equation}\label{eqn:boundcond}
            \int_{B_\delta(0)} \frac{\omega_\delta(\|\zeta\|_2)}{\|\zeta\|_2}\dd \zeta=:\boundkerinteg < \infty.
        \end{equation}
        \item $\rho>0$ constant.
        \item For the chosen sequence of $\kappa_n \searrow 0 $, we require
        \begin{enumerate}
            \item $\|\Kkf{n} \vec{u}_0^{\kappa_n} - u_0\|_{L^\infty(\Omega)^d} \xrightarrow{n \rightarrow \infty} 0$, $\|\ca{K}_{\kappa_n} \vec{v}_0^{\kappa_n} - v_0\|_{L^\infty(\Omega)^d} \xrightarrow{n \rightarrow \infty} 0$.
            \item The discrete external body forces $\vec{\b{b}}_{\kappa_n}\in C^0([0,T], (\mathbb{R}^d)^{I^\kappa})$, have to fulfill 
        \begin{equation*}
            \|\Kkf{n}\vec{\b{b}}_{\kappa_n} - \b{b}\|_{C^0([0,T], L^\infty(\Omega)^d)} \xrightarrow[]{\kappa \searrow 0}0.
        \end{equation*}
        \end{enumerate}
    \end{enumerate}
\end{assumptions}

\begin{remark}[About the Assumptions, Part I]
    The $L^\infty(\Omega)^d$ norm inherited from the solution space, in which the convergence is therefore naturally stated in, causes the Assumptions \ref{ass:numconv} to be rather restrictive. Material parameters need to be sufficiently regular and the convergence of the numerical input data is required in the $C^0([0,T], L^\infty(\Omega)^d)$ or $L^\infty(\Omega)^d$ norm, respectively.\par
    In particular, if the input data is not continuous, the discontinuities have to lie within the closure of all boundaries of the discrete elements $\Omega_i$ for all $\kappa_n$ in the chosen sequence $\kappa_n\searrow 0$, imposing significant geometrical restriction on the discontinuity set. The offset $\vec{o}_\kappa$ as introduced in Definition \ref{def:constdisc} can be used to mitigate this issue for simple discontinuity sets in the input data, as we will do in the numerical example shown in Section \ref{sect:numex}. This is nonetheless undesirable, and alternatives trying to evade the $L^\infty(\Omega)^d$ norm altogether are discussed in Remark \ref{rm:abouttheassumptionspt2}.\par
    Continuous input data however does not pose a problem for most choices of deriving the numerical input data $\vec{u}_0, \vec{v}_0, \vec{\b{b}}_{\kappa}$ from the continuous $u_0, v_0, \b{b}$ such as choosing the value at the index $i \in I^\kappa$ to be the value of the continuous input data at $x_i$, i.e. for example $(\vec{u}_0)_i = u_0(x_i)$, or the projection of the continuous input data under $\Pk$. Lipschitz continuity of the input data would then result in a linear convergence of the discrete input data to the continuous one. \par
    This approach for choosing the numeric input data however requires careful handling of elements $\Omega_i$ with $x_i \not\in \Omega$. The solution for this is to continuously extend the input data to $\overline{B_{r}(\Omega)} := \{x \in \mathbb{R}^d: \dist(x, \Omega)\leq r\}$ for a sufficiently chosen $r > \sqrt{d}\kappa/2$.
    \end{remark}

Again, in the following, we will assume without loss of generality, that $\rho \equiv 1$, through rescaling of $\omega_\delta$ and the external body forces.\par
Beginning with the well-posedness proof, the main step will be to show the Lipschitz continuity of the discrete non-local operator $\vec{F}_\kappa$. From now on, let $(\mathbb{R}^d)^{I^\kappa}$ be endowed with the $\|\cdot\|_\infty$ norm, unless specified otherwise.\par\medskip

To this end, we need the following lemma.
\begin{lemma}[Uniform Bounds on Discrete Integrals]\label{lemm:unifbndinteg} For all $i\in I^\kappa$ and $x\in \Omega_i$,
\begin{align*}
        0\leq \sum_{j\not=i} \int_{\Omega_j}\frac{\omega_\delta(\|x_j-x_i\|_2)}{\|x'-x\|_2} \dd x' \leq \boundkerdiscrinteg,\\
        0\leq \sum_{j\not=i} V_j \frac{\omega_\delta(\|x_j-x_i\|_2)}{\|x_j-x_i\|_2}\leq \boundkerdiscrinteg,
\end{align*}
for a constant $\boundkerdiscrinteg>0$ for all $\delta/\sqrt{d}> \kappa>0$.
\end{lemma}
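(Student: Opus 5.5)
Both sums are nonnegative because every term is. The plan is to bound each term by an integral of the continuous kernel over a slightly enlarged region, then invoke (\ref{eqn:boundcond}). The only tools are the monotonicity of $\omega_\delta$ from Assumptions \ref{ass:numconv} and the geometry of the cubes $B_j$.

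\textbf{Geometric comparisons.} Fix $i$, $x\in\Omega_i$ and $j\neq i$, and let $h:=\sqrt{d}\,\kappa$ denote the diameter of a cube. Since $i\neq j$, the centers satisfy $\|x_j-x_i\|_2\geq\kappa$. For $x'\in\Omega_j$ and any $y\in B_j$, the differences $x-x_i$ and $x'-x_j$ lie in cubes of side $\kappa$. This gives
\begin{equation*}
\|x'-x\|_2\geq \|x_j-x_i\|_2-h, \qquad \|y-x_i\|_2\leq \|x_j-x_i\|_2+h/2 .
\end{equation*}
The first bound is useless for neighbouring cells, where $\|x_j-x_i\|_2$ may be comparable to $h$. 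I would therefore split the index set into near cells with $\|x_j-x_i\|_2\leq 2h$ and far cells with $\|x_j-x_i\|_2> 2h$.

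\textbf{Far cells.} On far cells we have $\|x_j-x_i\|_2-h\geq \tfrac13(\|x_j-x_i\|_2+h/2)$ and $\|x_j-x_i\|_2\ge \tfrac{4}{5}(\|x_j-x_i\|_2+h/2)$. Set $r_j:=\|x_j-x_i\|_2+h/2$. Then $\omega_\delta(\|x_j-x_i\|_2)\leq \omega_\delta(\|y-x_i\|_2)$ fails in general, since monotonicity points the wrong way. Instead I compare with a shrunken point. For $y\in B_j$ we have $\|y-x_i\|_2\geq \|x_j-x_i\|_2-h/2$, so monotonicity gives $\omega_\delta(\|x_j-x_i\|_2)\le\omega_\delta(\tfrac{\lambda}{2}\|y-x_i\|_2)$ for a suitable fixed $\lambda$. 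Integrating over $B_j$, using $V_j\leq\kappa^d=\vol(B_j)$, summing over far $j$, and substituting $\zeta=\tfrac{\lambda}{2}(y-x_i)$ bounds the far part of both sums by $C_d\int_{\mathbb{R}^d}\omega_\delta(\|\zeta\|_2)/\|\zeta\|_2\,\dd\zeta=C_d\boundkerinteg$. The scaling works because the kernel $\omega_\delta(\|\zeta\|_2)/\|\zeta\|_2$ is homogeneous of degree $d-1$ in the change of variables, and the cubes are disjoint. The factor that appears is $(2/\lambda)^{d-1}$, which is harmless.

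\textbf{Near cells (main obstacle).} There are at most $C_d$ such $j$, independent of $\kappa$, and each has $\|x_j-x_i\|_2\ge\kappa$. The difficulty is that $\omega_\delta$ is only integrable against $1/\|\zeta\|_2$, so it may blow up at $0$. I expect to get around this with monotonicity: $\omega_\delta(\|x_j-x_i\|_2)\le\omega_\delta(t)$ for all $t\le\kappa$, so averaging gives
\begin{equation*}
\omega_\delta(\|x_j-x_i\|_2)\,\kappa^{d-1}\leq C\int_{B_\kappa(0)}\frac{\omega_\delta(\|\zeta\|_2)}{\|\zeta\|_2}\dd\zeta\leq C\boundkerinteg .
\end{equation*}
For the second sum the near terms are then $V_j\omega_\delta/\|x_j-x_i\|_2\leq \kappa^{d-1}\omega_\delta(\|x_j-x_i\|_2)\leq C\boundkerinteg$. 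For the first sum, $\int_{\Omega_j}\|x'-x\|_2^{-1}\dd x'\le\int_{B_{3h}(x)}\|x'-x\|_2^{-1}\dd x'\leq C\kappa^{d-1}$, which yields the same bound. The hypothesis $\kappa<\delta/\sqrt d$ only ensures that the relevant balls behave uniformly. Adding the near and far contributions gives $\boundkerdiscrinteg=C_d\boundkerinteg$, independent of $i$, $x$ and $\kappa$.
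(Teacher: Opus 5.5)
Your proof is correct, but it takes a longer route than the paper. The paper uses one inequality that holds for \emph{every} $j\neq i$. Distinct centres satisfy $\|x_j-x_i\|_2\ge\kappa$, so $\|x'-x\|_2\le\sqrt{d}\,\kappa+\|x_j-x_i\|_2\le(\sqrt{d}+1)\|x_j-x_i\|_2$ for $x\in\Omega_i$, $x'\in\Omega_j$. This single upper bound does two things. First, it gives $1/\|x_j-x_i\|_2\le(\sqrt{d}+1)/\|x'-x\|_2$, so the second sum is at most $(\sqrt{d}+1)$ times the first. Second, by monotonicity it gives $\omega_\delta(\|x_j-x_i\|_2)\le\omega_\delta\big(\|x'-x\|_2/(\sqrt{d}+1)\big)$, so after scaling the first sum is at most $\int_{\mathbb{R}^d}\omega_\delta\big(\|x'-x\|_2/(\sqrt{d}+1)\big)\|x'-x\|_2^{-1}\,\mathrm{d}x'=(\sqrt{d}+1)^{d-1}\mathcal{L}$. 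This is your ``shrunken point'' idea, except that the singular factor $1/\|x'-x\|_2$ stays centred at $x$ instead of being recentred at $x_i$. As a result, the paper never needs a lower bound on $\|x'-x\|_2$ in terms of $\|x_j-x_i\|_2$, which is the only thing that fails for neighbouring cells. Your recentring at $x_i$ is what forces the near/far split: in the first sum, comparing $1/\|x'-x\|_2$ with $1/\|x'-x_i\|_2$ degenerates for adjacent cells. It also forces the separate near-cell estimate $\omega_\delta(r)\,r^{d-1}\le C\mathcal{L}$. That estimate is a correct consequence of monotonicity plus integrability, but it is avoidable here, and the paper's version is shorter and gives the explicit constant $(\sqrt{d}+1)^d\mathcal{L}$. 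Two small remarks. On far cells, justifying $\omega_\delta(\|x_j-x_i\|_2)\le\omega_\delta(\tfrac{\lambda}{2}\|y-x_i\|_2)$ requires the \emph{upper} bound $\|y-x_i\|_2\le\|x_j-x_i\|_2+h/2\le\tfrac54\|x_j-x_i\|_2$ (so any $\lambda\le 8/5$ works), not the lower bound you cite. Also, the hypothesis $\kappa<\delta/\sqrt{d}$ plays no role in either argument.
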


\begin{proof}
    This follows from 
    \begin{align}\label{eqn:bndnorm}
        \|x'-x\|_2 &\leq 2\tfrac{\sqrt{d}}{2}\kappa + \|x_i - x_j\|_2 \\
        &\leq (\sqrt{d} +1) \|x_i-x_j\|_2 \nonumber.
    \end{align}
    for all $x\in \Omega_i$, $x' \in \Omega_j$ with $i\not=j$, using the change of variables formula and Assumptions \ref{ass:numconv}, in particular, the monotonicity of $\omega_\delta$ and (\ref{eqn:boundcond}),
    \begin{align*}
        0&\leq \sum_{j\not=i} V_j \frac{\omega_\delta(\|x_j-x_i\|_2)}{\|x_j-x_i\|_2} \leq (\sqrt{d}+1)\sum_{j\not=i}\int_{\Omega_j} \frac{\omega_\delta(\|x_j-x_i\|_2)}{\|x'-x\|_2} \dd x' \\
        &\leq (\sqrt{d}+1)^d\int_{B_{(\sqrt{d}+1)\delta}(x)} \frac{\omega_\delta(\tfrac{1}{\sqrt{d}+1}\|x'-x\|_2)}{\frac{1}{\sqrt{d}+1}\|x'-x\|_2} \frac{1}{(\sqrt{d}+1)^d}\dd x'  \leq (\sqrt{d}+1)^d\ca{L}
    \end{align*}
\end{proof}

\begin{proposition}[Lipschitz Continuity of the Discrete Non-Local Operator]\label{prop:LScontdiscrrhs} \newline
    Let $\vec{\b{u}},\vec{\b{v}}\in \Linftyptw([-T,0],(\mathbb{R}^d)^{I^\kappa})$, then
    \begin{equation*}
        \|\vec{F}_\kappa(\vec{\b{u}}) - \vec{F}_\kappa(\vec{\b{v}})\|_{\infty} \leq L_{\vec{F}}\|\vec{\b{u}}-\vec{\b{v}}\|_{\Linftyptw([-T,0], (\mathbb{R}^d)^{I^\kappa})} 
    \end{equation*}
    with $L_{\vec{F}}>0$ independent of $\delta/\sqrt{d}> \kappa > 0$.
\end{proposition}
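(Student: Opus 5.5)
We prove the bound bond by bond and then sum using Lemma \ref{lemm:unifbndinteg}. Fix $i\in I^\kappa$ and $j\neq i$, write $\zeta=x_j-x_i$, and abbreviate $\|\vec{\b{u}}-\vec{\b{v}}\|:=\|\vec{\b{u}}-\vec{\b{v}}\|_{\Linftyptw([-T,0],(\mathbb{R}^d)^{I^\kappa})}$. The goal is
\begin{equation*}
\|\vec{\b{f}}^{ij}(\vec{\b{u}})-\vec{\b{f}}^{ij}(\vec{\b{v}})\|_2\leq C\,\frac{\omega_\delta(\|\zeta\|_2)}{\|\zeta\|_2}\,\|\vec{\b{u}}-\vec{\b{v}}\|
\end{equation*}
with $C$ depending only on $\ell,\scalarlaw^+,S^-_1$ and $d$. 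Multiplying by $V_j$, summing over $j\neq i$, and applying the second estimate of Lemma \ref{lemm:unifbndinteg} then gives $L_{\vec F}=C\boundkerdiscrinteg$, which is independent of $\kappa<\delta/\sqrt d$. The passage from $\|\cdot\|_2$ to $\|\cdot\|_\infty$ costs at most a factor $\sqrt d$.

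The basic Lipschitz facts are as follows. By the reverse triangle inequality,
\begin{equation*}
|\ca{S}^{ij}(\vec u)-\ca{S}^{ij}(\vec v)|\leq \frac{\|\vec u_j-\vec v_j-(\vec u_i-\vec v_i)\|_2}{\|\zeta\|_2}\leq \frac{2\sqrt d\,\|\vec u-\vec v\|_\infty}{\|\zeta\|_2}.
\end{equation*}
For the history term we use $|\sup_s a(s)-\sup_s b(s)|\leq\sup_s|a(s)-b(s)|$. This is exactly where the pointwise supremum norm of $\Linftyptw$ enters. It yields
\begin{equation*}
\Big|\sup_s\ca{S}^{ij}(\vec{\b u}(s))-\sup_s\ca{S}^{ij}(\vec{\b v}(s))\Big|\leq \frac{2\sqrt d\,\|\vec{\b u}-\vec{\b v}\|}{\|\zeta\|_2}.
\end{equation*}
Since $\scalarlaw$ and $\mu$ are $\ell$-Lipschitz with $|\scalarlaw|\leq\scalarlaw^+$, $0\le\mu\le1$ and $\|e^{ij}\|_2\le1$, we telescope
\begin{equation*}
\scalarlaw_u\mu_u e_u-\scalarlaw_v\mu_v e_v=(\scalarlaw_u-\scalarlaw_v)\mu_u e_u+\scalarlaw_v(\mu_u-\mu_v)e_u+\scalarlaw_v\mu_v(e_u-e_v).
\end{equation*}
The first two terms are each bounded by $\ell\,2\sqrt d\,\|\vec{\b u}-\vec{\b v}\|/\|\zeta\|_2$ times $1$ or $\scalarlaw^+$, respectively.

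The main obstacle is the direction term $\scalarlaw_v\mu_v(e_u-e_v)$, because $e^{ij}$ is not Lipschitz near degenerate bonds. Here we use the cutoff at $S^-_1$, and we reorder $u,v$ in the telescoping if necessary. If both bonds are sub-critical, the whole difference vanishes. Suppose first that $\ca{S}^{ij}(\vec{\b v}(0))\geq S^-_1$, so that the deformed bond $w_v:=\vec v_j-\vec v_i+\zeta$ satisfies $\|w_v\|_2\geq(1+S^-_1)\|\zeta\|_2>0$. For $w_u\ne0$ the elementary estimate
\begin{equation*}
\Big\|\frac{w_u}{\|w_u\|_2}-\frac{w_v}{\|w_v\|_2}\Big\|_2\leq\frac{2\|w_u-w_v\|_2}{\|w_v\|_2}
\end{equation*}
gives $\|e_u-e_v\|_2\leq 4\sqrt d\,\|\vec{\b u}-\vec{\b v}\|/((1+S^-_1)\|\zeta\|_2)$. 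If $w_u=0$, then $e_u=0$, and $\|e_v\|_2=1\le\|w_u-w_v\|_2/\|w_v\|_2$, so the same bound holds. If instead only $\vec{\b u}$ has a non-sub-critical bond, we swap the roles of $u$ and $v$ in the telescoping, writing the last term with $\scalarlaw_u\mu_u$. This symmetric case split is the delicate point, and the strict inequality $S^-_1>-1$ is essential for it.

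Collecting the three terms gives $C=2\sqrt d\,\ell(1+\scalarlaw^+)+4\sqrt d\,\scalarlaw^+/(1+S^-_1)$. The factor $\omega_\delta(\|\zeta\|_2)$ multiplies throughout, so summation against Lemma \ref{lemm:unifbndinteg} concludes the proof.
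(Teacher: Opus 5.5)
Your proof is correct and follows the paper's argument: the same three-term telescoping of $\scalarlaw\,\mu\, e$, the same Lipschitz bounds for $\ca{S}^{ij}$, for the history supremum via $|\sup a-\sup b|\leq\sup|a-b|$, and for $\scalarlaw$, $\mu$ and $e^{ij}$, the cutoff at $S^-_1>-1$ to control the direction term, and summation with the second estimate of Lemma \ref{lemm:unifbndinteg}, which yields exactly the paper's constant $2\sqrt{d}\,(\ell+\ell\scalarlaw^+ + 2\scalarlaw^+/(S^-_1+1))\,\boundkerdiscrinteg$. The only difference is that you treat sub-critical and degenerate bonds by a direct case split instead of the paper's substitution $e^{ij}_{\vec{b}}$. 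With your ordering, the factor $\scalarlaw_v$ already annihilates the direction term when the bond of $\vec{\b v}$ is sub-critical, so the swap of $u$ and $v$ is unnecessary; likewise the passage from $\|\cdot\|_2$ to $\|\cdot\|_\infty$ costs no factor, since $\|\cdot\|_\infty\leq\|\cdot\|_2$.
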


\begin{proof} This result follows similarly to how the Lipschitz continuity was obtained for the continuous operator $F$ in \cite{WP_FRAC}, Lemma 1. Since $\scalarlaw,\mu$ were not changed by the discretization, their Lipschitz constants are also unchanged. \par
Let $i\in I^\kappa$, then
\begin{align*}
    (\vec{F}_\kappa(\vec{\b{u}}) - \vec{F}_\kappa(\vec{\b{v}}))_i &=         \sum_{i\not=j} V_j{\omega_\delta(\|x_j-x_i\|_2)\scalarlaw(\ca{S}^{ij}(\vec{\b{u}} (0)))\mu\Big(\sup_{-T\leq s\leq 0}\ca{S}^{ij}(\vec{\b{u}} (s))\Big)e^{ij}(\vec{\b{u}} (0))} \\
   &\quad -\sum_{i\not=j} V_j{\omega_\delta(\|x_j-x_i\|_2)\scalarlaw(\ca{S}^{ij}(\vec{\b{v}} (0)))\mu\Big(\sup_{-T\leq s\leq 0}\ca{S}^{ij}(\vec{\b{v}} (s))\Big)e^{ij}(\vec{\b{v}} (0))}.
\end{align*}
To deal with these terms, we first find
\begin{equation}
\label{eqn:bndonsemdiscrS}    |\ca{S}^{ij}(\vec{\b{u}} (t))-\ca{S}^{ij}(\vec{\b{v}} (t))| \leq\tfrac{\sqrt{d}}{\|x_j-x_i\|_2}\Big(\|\vec{\b{u}} _i(t) - \vec{\b{v}} _i(t)\|_\infty + \|\vec{\b{u}} _j(t) - \vec{\b{v}} _j(t)\|_\infty\Big),
\end{equation}
as well as, assuming non-degenerate bonds for both $\b{u}(t)$ and $\b{v}(t)$,
\begin{equation}\label{eqn:bndonsemdiscre}
\begin{aligned}
    &\|e^{ij}(\vec{\b{u}}(t))-e^{ij}(\vec{\b{v}}(t))\|_\infty\\
    &\leq \frac{1}{\|\vec{\b{u}}_j(t)-\vec{\b{u}}_i(t)+x_j-x_i\|_2}(\|\vec{\b{u}}_j(t)-\vec{\b{u}}_i(t)- \vec{\b{v}}_j(t)+\vec{\b{v}}_i(t)\|_\infty)\\
    &\quad+\|\vec{\b{v}}_j(t)-\vec{\b{v}}_i(t)+x_j-x_i\|_\infty\frac{\|\vec{\b{u}}_j(t)-\vec{\b{u}}_i(t)- \vec{\b{v}}_j(t)+\vec{\b{v}}_i(t)\|_2}{\|\vec{\b{u}}_j(t)-\vec{\b{u}}_i(t)+x_j-x_i\|_2 \|\vec{\b{v}}_j(t)-\vec{\b{v}}_i(t)+x_j-x_i\|_2}\\
    &\leq \frac{2\sqrt{d}}{\|\vec{\b{u}}_j(t)-\vec{\b{u}}_i(t)+x_j-x_i\|_2} \Big(\|\vec{\b{u}}_j(t)- \vec{\b{v}}_j(t)\|_\infty+\|\vec{\b{v}}_i(t)-\vec{\b{u}}_i(t)\|_\infty\Big) \\
    &= \frac{2\sqrt{d}}{(\ca{S}^{ij}(\vec{\b{u}}(t))+1)\|x_j-x_i\|_2} \Big(\|\vec{\b{u}}_j(t)- \vec{\b{v}}_j(t)\|_\infty+\|\vec{\b{v}}_i(t)-\vec{\b{u}}_i(t)\|_\infty\Big).
\end{aligned}
\end{equation}
Note that for any $t\in [-T, 0]$ and $i\in I^\kappa$,
\begin{equation*}
    \|\vec{\b{u}}_j(t)- \vec{\b{v}}_j(t)\|_\infty \leq \|\vec{\b{u}}(t)- \vec{\b{v}}(t)\|_\infty \leq \|\vec{\b{u}}(t)- \vec{\b{v}}(t)\|_{\Linftyptw([-T,0], (\mathbb{R}^d)^{I^\kappa})}.
\end{equation*}
We now need to deal with sub-critical bonds in $\vec{\b{v}}$, which we do by defining $e^{ij}_{\vec{b}}: (\mathbb{R}^d)^{I_\kappa}\rightarrow \mathbb{R}^d$ by
\begin{align*}
    e^{ij}_{\vec{b}}(\vec{a}) = \begin{cases}e^{ij}(\vec{a}) & \text{if } \ca{S}^{ij}(\vec{a})\geq S^-_1\\
    e^{ij}(\vec{b}) & \text{else,}
    \end{cases}
\end{align*}
for $\vec{b}, \vec{a} \in (\mathbb{R}^d)^{I^\kappa}$.
With this, we define
\begin{equation*}
    (\vec{F}_{\kappa,\vec{\b{u}}^\kappa}(\vec{\b{a}}^\kappa))_i = \sum_{i\not=j} V_j{\omega_\delta(\|x_j-x_i\|_2)\scalarlaw(\ca{S}^{ij}(\vec{\b{a}}^\kappa(0)))\mu\Big(\sup_{-T\leq s\leq 0}\ca{S}^{ij}(\vec{\b{a}}^\kappa(s))\Big)e^{ij}_{\vec{\b{u}}^\kappa(0)}(\vec{\b{a}}^\kappa(0))}.
\end{equation*}
We then obtain $\vec{F}_\kappa(\vec{\b{a}}^\kappa) = \vec{F}_{\kappa, \vec{\b{u}}^\kappa}(\vec{\b{a}}^\kappa)$ for all $\vec{\b{a}}^\kappa, \vec{\b{u}}^\kappa \in C^0([-T,0], (\mathbb{R}^d)^{I^\kappa})$.\par
This then leads to, using $\scalarlaw^+:= \max(|S^-_0|,|S^+_0|)= \sup_{s\in \mathbb{R}} |\scalarlaw(s)|$,
\begin{align*}
        &\|(\vec{F}_\kappa(\vec{\b{u}}) - \vec{F}_\kappa(\vec{\b{v}}))_i\|_\infty=\|(\vec{F}_\kappa(\vec{\b{u}}) - \vec{F}_{\kappa, \vec{\b{u}}^\kappa}(\vec{\b{v}}))_i\|_\infty \\
        &\leq \sum_{i\not=j} V_j\omega_\delta(\|x_j-x_i\|_2)|\scalarlaw(\ca{S}^{ij}(\vec{\b{u}} (0)))-\scalarlaw(\ca{S}^{ij}(\vec{\b{v}} (0)))|\\
        &\quad+\scalarlaw^+\sum_{i\not=j} V_j\omega_\delta(\|x_j-x_i\|_2)\Big|\mu\Big(\sup_{-T\leq s\leq 0}\ca{S}^{ij}(\vec{\b{v}} (s))\Big)-\mu\Big(\sup_{-T\leq s\leq 0}\ca{S}^{ij}(\vec{\b{u}} (s))\Big)\Big|\\
        &\quad+\sum_{i\not=j} V_j\omega_\delta(\|x_j-x_i\|_2)|\scalarlaw(\ca{S}^{ij}(\vec{\b{u}} (0)))|\|e^{ij}(\vec{\b{u}} (0))-e^{ij}_{\vec{\b{u}}^\kappa(0)}(\vec{\b{v}} (0))\|_\infty \\
        \intertext{Note that $|\scalarlaw(\ca{S}^{ij}(\vec{\b{u}} (0)))|\|e^{ij}(\vec{\b{u}} (0))-e^{ij}_{\vec{\b{u}}^\kappa(0)}(\vec{\b{v}} (0))\|_\infty$ is $0$ both if $\ca{S}^{ij}(\vec{\b{v}}(0))\in [-1, S^-_1)$ or $\ca{S}^{ij}(\vec{\b{u}}(0))\in [-1, S^-_1)$, leading us to globally apply the bound (\ref{eqn:bndonsemdiscre}) with $\ca{S}^{ij}(\vec{\b{u}}(0))\geq S^{-}_1$,}
        &\leq \sum_{i\not=j} 2\ell\sqrt{d}V_j\tfrac{\omega_\delta(\|x_j-x_i\|_2)}{\|x_j-x_i\|_2}  \|(\vec{\b{u}}  - \vec{\b{v}} )(0)\|_\infty \\
        &\quad+ \sum_{i\not=j} 2\ell \scalarlaw^+\sqrt{d}V_j\tfrac{\omega_\delta(\|x_j-x_i\|_2)}{\|x_j-x_i\|_2} \Bigg(\sup_{-T\leq s\leq 0}\|(\vec{\b{u}}  - \vec{\b{v}} )(s)\|_\infty\Bigg) \\
        &\quad+ \sum_{i\not=j} 4 \scalarlaw^+\sqrt{d}V_j\tfrac{\omega_\delta(\|x_j-x_i\|_2)}{(S^-_1+1)\|x_j-x_i\|_2}  \|(\vec{\b{u}}  - \vec{\b{v}} )(0)\|_\infty \\
        &\leq 2\sqrt{d}(\ell + \ell \scalarlaw^+ + \tfrac{2\scalarlaw^+}{S^-_1+1})\boundkerdiscrinteg\|\vec{\b{u}}  - \vec{\b{v}} \|_{\Linftyptw([-T,0], (\mathbb{R}^d)^{I^\kappa})},
\end{align*}
where $\boundkerdiscrinteg>0$ comes from Lemma \ref{lemm:unifbndinteg}.
\end{proof}

We now reduce the order of the system as follows.
Define 
\begin{equation*}
    \vec{G}_\kappa: C^0([-T,0],(\mathbb{R}^d)^{I^\kappa})\times C^0([-T,0],(\mathbb{R}^d)^{I^\kappa}) \times [0,T] \rightarrow (\mathbb{R}^d)^{I^\kappa} \times (\mathbb{R}^d)^{I^\kappa},
\end{equation*}
by, for $\phi,\psi \in C^0([-T,0],(\mathbb{R}^d)^{I^\kappa})$, 
\begin{equation*}
    \vec{G}_\kappa((\vec{\phi}, \vec{\psi}), t) = \begin{pmatrix}
    \vec{\psi}(0) \\
    \vec{F}_\kappa(\vec{\phi}) + \vec{\b{b}}_\kappa(t)
    \end{pmatrix}.
\end{equation*}
We then have
\begin{equation}
\label{eqn:LSG}
    \|\vec{G}_\kappa(\vec{\Phi},t)-\vec{G}_\kappa(\vec{\Psi},t)\|_\infty\leq L_{\vec{G}} \|\vec{\Phi} -\vec{\Psi}\|_{C^0([-T,0],(\mathbb{R}^d)^{I^\kappa}\times (\mathbb{R}^d)^{I^\kappa})},
\end{equation}
with $L_{\vec{G}}=\max(1, L_{\vec{F}})$. Further, from the inequality \begin{align*}|\|\vec{G}_\kappa(\vec{\Phi},t)\|_{\infty}-\|\vec{G}_\kappa(0,t)\|_{\infty}| \leq \|\vec{G}_\kappa(\vec{\Phi},t)-\vec{G}_\kappa(0,t)\|_{\infty},
\end{align*}
one obtains
\begin{equation*}
    \|\vec{G}_\kappa(\vec{\Phi},t)\|_\infty \leq L_{\vec{G}}\|\vec{\Phi}\|_{C^0([-T,0],(\mathbb{R}^d)^{I^\kappa}\times (\mathbb{R}^d)^{I^\kappa})} + \|\vec{\b{b}}_\kappa(t)\|_\infty.
\end{equation*}
The reduced system to (\ref{eqn:semdiscrprobl}) is then
\begin{equation}
\label{eqn:semdiscreqred}
    \begin{rcases}
        \tfrac{d}{dt}\vec{\b{u}}_{Red}^\kappa(t) = \vec{G}_\kappa((\vec{\b{u}}_{Red}^\kappa)_t, t) \\
        \vec{\b{u}}_{Red}^\kappa(0) = \vec{u}_{0, Red}^\kappa
    \end{rcases} t \in [0,T].
\end{equation}
with $\vec{u}_{0, Red}^\kappa := (\vec{u}_0^\kappa, \vec{v}_0^\kappa)^\top$. \par
With this, we can formulate the well-posedness result for the discrete models.

\begin{proposition}[Well-Posedness of the Semi-Discrete Problem]\label{prop:WPsemicont}
Let Assumptions \ref{ass:numconv} hold, then the problem (\ref{eqn:semdiscreqred}) has a unique solution $\vec{\b{u}}^\kappa_{Red}\in C^1([0,T], (\mathbb{R}^{d})^{I^\kappa}\times (\mathbb{R}^{d})^{I^\kappa} )$.
\end{proposition}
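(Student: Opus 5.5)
The plan is to treat (\ref{eqn:semdiscreqred}) as a retarded functional differential equation in the finite-dimensional Banach space $X := (\mathbb{R}^d)^{I^\kappa}\times(\mathbb{R}^d)^{I^\kappa}$ with the $\|\cdot\|_\infty$ norm, and to obtain existence and uniqueness from a Banach fixed point argument in a Bielecki-weighted norm. This avoids any restriction on the length of the time interval. We rewrite the problem as the integral equation
\begin{equation*}
    \vec{\b{w}}(t) = \vec{u}^\kappa_{0,Red} + \int_0^t \vec{G}_\kappa((\vec{\b{w}})_s, s)\dd s, \qquad t\in[0,T],
\end{equation*}
and define the operator $\ca{T}$ on $C^0([0,T],X)$ by the right-hand side. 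The first step is to check that $\ca{T}$ maps $C^0([0,T],X)$ into itself. For a continuous $\vec{\b{w}}$, the history $(\vec{\b{w}})_s$ is continuous on $[-T,0]$, so it lies in $C^0([-T,0],X)\subset \Linftyptw([-T,0],X)$. We must also check that $s\mapsto \vec{G}_\kappa((\vec{\b{w}})_s,s)$ is continuous, so that the integral is a Riemann integral and $\ca{T}\vec{\b{w}}$ is $C^1$. This follows from two facts:
\begin{itemize}
    \item $s\mapsto(\vec{\b{w}})_s$ is continuous into $C^0([-T,0],X)$, by uniform continuity of $\vec{\b{w}}$ on $[0,T]$;
    \item the Lipschitz estimate (\ref{eqn:LSG}), which rests on Proposition \ref{prop:LScontdiscrrhs}, combined with the continuity of $\vec{\b{b}}_\kappa$.
\end{itemize}

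The second step is the contraction estimate. By construction of the shift, $\sup_{r\in[-T,0]}\|(\vec{\b{w}})_s(r)-(\vec{\b{z}})_s(r)\|_\infty = \sup_{0\le r\le s}\|\vec{\b{w}}(r)-\vec{\b{z}}(r)\|_\infty$, because $(\vec{\b{w}})_s$ only samples $\vec{\b{w}}$ on $[0,s]$. This causality is the key point. Using (\ref{eqn:LSG}) we get
\begin{equation*}
    \|\ca{T}\vec{\b{w}}(t)-\ca{T}\vec{\b{z}}(t)\|_\infty \le L_{\vec{G}}\int_0^t \sup_{0\le r\le s}\|\vec{\b{w}}(r)-\vec{\b{z}}(r)\|_\infty \dd s .
\end{equation*}
Next we introduce the norm $\|\vec{\b{w}}\|_\lambda := \sup_t e^{-\lambda t}\|\vec{\b{w}}(t)\|_\infty$, which is equivalent to the sup norm. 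Since $\sup_{r\le s}\|\vec{\b{w}}(r)-\vec{\b{z}}(r)\|_\infty \le e^{\lambda s}\|\vec{\b{w}}-\vec{\b{z}}\|_\lambda$, we obtain $\|\ca{T}\vec{\b{w}}-\ca{T}\vec{\b{z}}\|_\lambda\le \tfrac{L_{\vec{G}}}{\lambda}\|\vec{\b{w}}-\vec{\b{z}}\|_\lambda$. Choosing $\lambda = 2L_{\vec{G}}$ makes $\ca{T}$ a contraction on the complete space $(C^0([0,T],X),\|\cdot\|_\lambda)$.

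The fixed point is then unique in $C^0([0,T],X)$. By the first step it is $C^1$, and it solves (\ref{eqn:semdiscreqred}) by the fundamental theorem of calculus. Conversely, every $C^1$ solution satisfies the integral equation, which gives uniqueness in $C^1([0,T],X)$.

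The main obstacle is the measurability and continuity bookkeeping around the memory term $\sup_{-T\le s\le 0}\ca{S}^{ij}(\cdot)$. Proposition \ref{prop:LScontdiscrrhs} is stated on $\Linftyptw$, and we need $\vec{G}_\kappa$ to be well defined and continuous along continuous histories. Here $\ca{S}^{ij}$ is continuous on $(\mathbb{R}^d)^{I^\kappa}$, so for continuous histories the supremum is a genuine maximum of a continuous function. The Lipschitz bound then yields continuity directly, and no essential-supremum subtleties arise in the discrete setting.

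The role of sub-critical and degenerate bonds, where $e^{ij}$ is discontinuous, is already absorbed into Proposition \ref{prop:LScontdiscrrhs} through the factor $\scalarlaw(\ca{S}^{ij})=0$. Finally, $L_{\vec{F}}$, and hence the contraction argument, is uniform in $\kappa$ for $\kappa<\delta/\sqrt{d}$. For larger $\kappa$ the sum is finite anyway, since $I^\kappa$ is finite because $\Omega$ is bounded, so a finite $\kappa$-dependent Lipschitz constant still exists and the same argument applies.
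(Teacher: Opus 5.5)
Your proof is correct and follows essentially the paper's route. Both rewrite (\ref{eqn:semdiscreqred}) as the integral equation and run a global successive-approximation argument on $C^0([0,T],(\mathbb{R}^d)^{I^\kappa}\times(\mathbb{R}^d)^{I^\kappa})$, driven by the Lipschitz bound (\ref{eqn:LSG}) and the causality of the history map. The only difference is that your Bielecki-weighted norm gives a one-step contraction, where the paper instead uses the factorial bounds $ML_{\vec{G}}^{n-1}t^n/n!$ on the Picard iterates. Your explicit treatment of the continuity of $s\mapsto\vec{G}_\kappa((\vec{\b{w}})_s,s)$, of uniqueness in $C^1$, and of the case $\kappa\geq\delta/\sqrt{d}$ fills in points the paper leaves implicit.
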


\begin{proof} We repeat the proof of \cite{WP_FRAC}, Theorem 2. Beginning first by rewriting (\ref{eqn:semdiscreqred}) into an  integral equation, 
\begin{equation}
\label{eqn:semdiscrinteqred}
        \b{u}_{Red}^\kappa(t) = \int_{0}^t\vec{G}_\kappa((\b{u}_{Red}^\kappa)_s, s) \dd s + \vec{u}_{0, Red}^\kappa 
\end{equation}
for all $t\in [0,T]$. The claim now follows from a standard application of a Picard iteration on $C^0([0,T], (\mathbb{R}^d)^{I^\kappa} \times (\mathbb{R}^d)^{I^\kappa})$ defined by
\begin{equation*}
    \b{u}^0:\equiv \vec{u}^\kappa_{0,Red},\ \ \ \b{u}^n(t) := \int_{0}^t \vec{G}_\kappa((\b{u}^{n-1})_s, s) \dd s+\vec{u}^\kappa_{0,Red}\text{, for } n\geq 1.
\end{equation*}
This yields the claim through (\ref{eqn:LSG}) and
\begin{equation*}
    \|\Phi_s - \Psi_s\|_{\Linftyptw([-T,0], (\mathbb{R}^{d})^{I^\kappa}\times (\mathbb{R}^{d})^{I^\kappa})} \leq \|\Phi - \Psi\|_{C^0([0,T], (\mathbb{R}^{d})^{I^\kappa}\times (\mathbb{R}^{d})^{I^\kappa})}
\end{equation*}
dealing with the time dependence of the first argument for all $s\in [0,T]$. All these bounds then lead to the usual bounds, the latter obtained through induction,
\begin{equation*}
    \|\b{u}^1(t) -\b{u}^0(t) \|_{\infty} \leq Mt, \ \ \ \|\b{u}^{n}(t)-\b{u}^{n-1}(t)\| \leq ML^{n-1}_{\vec{G}}\tfrac{t^n}{n!}, \text{ for } n\geq 2,
\end{equation*}
with $M:= L_{\vec{G}}\|\vec{u}^\kappa_{0,Red}\|_\infty + \|\vec{\b{b}_\kappa}\|_{C^0([0,T],(\mathbb{R}^d)^{I^\kappa})}$. This proves that the $(\b{u}^n)_{n\in \mathbb{N}_0}$ define a Cauchy sequence whose limit has to fulfill (\ref{eqn:semdiscrinteqred}).
\end{proof}

\begin{remark}[On Quadrature Weights]\label{rm:partvolumeweights}
Practical simulations include quadrature weights $w_{ij}$ replacing the $\chi_{[0,\delta]}(\|x_i-x_j\|_2)$ terms in $\omega_\delta(\|x_i-x_j\|_2)$, allowing for a more desirable numerical convergence behavior (See \cite{ImprovOnePointQuadAlgs, Seleson2018, Scabbia2023, LIU2024109115, TRASK2019151}). Then, $\omega_\delta(\|x_i-x_j\|_2) = \chi_{[0,\delta]}(\|x_i-x_j\|_2)\bar{\omega}(\|x_i-x_j\|_2)$ for some non-increasing $\bar{\omega}: (0,\infty) \rightarrow \mathbb{R}_{\geq 0}$ with $\omega_\delta \equiv \bar{\omega}$ on $(0,\delta]$ becomes $\omega^{ij}_\delta = w_{ij}\bar{\omega}(\|x_i-x_j\|_2)$. Including these weights does not cause significant theoretical problems for the results in this paper, as long as they fulfill the following assumptions taken from \cite{pflug2026convergencediscretizedlinearstatic}, for all $i,j\in I^\kappa$,
\begin{assumptions}[Assumptions on Partial Area Weights]\label{ass:quadweights}
\begin{enumerate}
    \item If $\|x_i-x_j\|_2 \geq \delta + \sqrt{d}\frac{\kappa}{2}$, then $w_{ij} = 0$.
    \item If $\|x_i-x_j\|_2 \leq \delta - \sqrt{d}\frac{\kappa}{2}$, then $w_{ij} = 1$.
    \item $0\leq w_{ij} \leq 1$.
    \item $w_{ij} = w_{ji}$.
\end{enumerate}
\end{assumptions}
Using these assumptions, Lemma \ref{lemm:unifbndinteg} and Lemma \ref{lemm:discrweighterrbnd} below can easily be adapted to include these terms. Rigorous proofs are given in the Appendix in Lemma \ref{lemm:alt34weights} and \ref{lemm:alt39weights}, respectively.
Note that requirements 1 and 2 merely ensure that if the $B_i$ do not intersect the horizon, or if it is fully inside, implied by its distance to $x_i$ alone, the weights have to be chosen to reflect this. 
\end{remark}

\subsection{Convergence of the Semi-Discrete Systems}
We will now aim to prove the convergence of the solutions for (\ref{eqn:semdiscrprobl}) to the continuous solution given by the system (\ref{eqn:conteq}). 
The main result will be stated in Theorem \ref{theo:convsemidiscr} below.

To this end, we begin with a few lemmas.
\begin{lemma}[Auxiliary Bounds]\label{lemm:auxbnd} Let $u\in L^\infty(\Omega)^d$, $\vec{u}\in (\mathbb{R}^d)^{I^\kappa}$, $i,j \in I^\kappa$ with $i\not=j$, then for almost all $x\in \Omega_i$ and almost all $x' \in \Omega_j$,
\begin{align}
\label{eqn:bndonSdiff}
    |\ca{S}^{ij}(\vec{u}) - \ca{S}(u, x',x)| & \leq \begin{cases}
        \frac{\sqrt{d}}{\|x'-x\|_2}( 2\|\Kk\vec{u}-u\|_{L^\infty(\Omega)^d} + (\ca{S}^{ij}(\vec{u})+2)\|x_j-x_i-x'+x\|_\infty) \\
        \frac{\sqrt{d}}{\|x_j-x_i\|_2}( 2\|\Kk\vec{u}-u\|_{L^\infty(\Omega)^d} + (\ca{S}({u},x',x)+2)\|x_j-x_i-x'+x\|_\infty),
    \end{cases}
\end{align}
where case brackets without conditions on the right signify that all cases hold simultaneously.\par
Similarly, for $\vec{u}$ with $\vec{u}_i-\vec{u}_j + x_i -x_j\not= 0$ and $u(x')-u(x)+x'-x \not= 0$,
\begin{align}\label{eqn:bndresesem}
    |e({u},x',x) - e^{ij}(\vec{u})| \leq 
    \tfrac{2\sqrt{d}}{(\ca{S}({u},x',x)+1)\|x'-x\|_2}\Big(2 \|\Kk\vec{u} - u\|_{L^\infty(\Omega)^d} + \|x-x_i-x'+x_j\|_\infty\Big).
\end{align}
For $\b{u}\in C^0([-T,0], L^\infty(\Omega)^d)$, $\vec{\b{u}}\in C^0([-T,0], (\mathbb{R}^d)^{I^\kappa})$, we have for almost all $x,x'$,
\begin{equation}\label{eqn:BndResultMu}
\begin{aligned}
    &\Big|{\mu\Big(\esssup_{-T\leq s \leq 0} \ca{S}([\Gamma\b{u}](s), x', x)\Big)}_{} -{\mu\Big(\sup_{-T\leq s \leq 0} \ca{S}^{ij}(\vec{\b{u}}(s))\Big)}_{}\Big|\\
    &\leq \Big((\sqrt{d}+1)\tfrac{\sqrt{d}\ell}{\|x'-x\|_2}\Big)( 2\|\Kk\vec{\b{u}}-\b{u}\|_{C^0([-T,0],L^\infty(\Omega)^d)} + (S^+_1+2)\|x_j-x_i-x'+x\|_\infty) .
\end{aligned}
\end{equation}
Similarly, we have,
\begin{equation}\label{eqn:BndResultscalarlaw}
\begin{aligned}
    &|\scalarlaw(\ca{S}(u, x', x)) -\scalarlaw(\ca{S}^{ij}(\vec{{u}}))|\\
    &\leq \begin{cases}
        \frac{\ell\sqrt{d}}{\|x'-x\|_2}( 2\|\Kk\vec{u}-u\|_{L^\infty(\Omega)^d} + (S^+_1+2)\|x_j-x_i-x'+x\|_\infty) \\
        \frac{\ell\sqrt{d}}{\|x_j-x_i\|_2}( 2\|\Kk\vec{u}-u\|_{L^\infty(\Omega)^d} + (S^+_1+2)\|x_j-x_i-x'+x\|_\infty).
    \end{cases}\\
    &\leq\frac{\ell\sqrt{d}(\sqrt{d}+1)}{\|x'-x\|_2}( 2\|\Kk\vec{u}-u\|_{L^\infty(\Omega)^d} + (S^+_1+2)\|x_j-x_i-x'+x\|_\infty).
\end{aligned}
\end{equation}
\end{lemma}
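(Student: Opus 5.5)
The plan is to reduce every estimate to two elementary facts about almost every $x\in\Omega_i$, $x'\in\Omega_j$. First, $\|\Kk\vec{u}(x)-u(x)\|_\infty = \|\vec{u}_i - u(x)\|_\infty\le \|\Kk\vec{u}-u\|_{L^\infty(\Omega)^d}$, and likewise at $x'$ with index $j$. Second, the reverse triangle inequality, which converts differences of Euclidean norms into norms of differences, at the cost of a factor $\sqrt{d}$ when passing from $\|\cdot\|_2$ to $\|\cdot\|_\infty$.

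For (\ref{eqn:bndonSdiff}), write $a=\vec{u}_j-\vec{u}_i+x_j-x_i$, $b=u(x')-u(x)+x'-x$, $r=\|x_j-x_i\|_2$ and $\rho=\|x'-x\|_2$. Then
\[
\ca{S}^{ij}(\vec{u})-\ca{S}(u,x',x)=\frac{\|a\|_2}{r}-\frac{\|b\|_2}{\rho}=\frac{\|a\|_2-\|b\|_2}{\rho}+\|a\|_2\Big(\frac{1}{r}-\frac{1}{\rho}\Big).
\]
We have $\bigl|\|a\|_2-\|b\|_2\bigr|\le\sqrt{d}\,\|a-b\|_\infty\le\sqrt{d}\,(2\|\Kk\vec{u}-u\|_{L^\infty}+\|x_j-x_i-x'+x\|_\infty)$. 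For the second term, $\|a\|_2/r=\ca{S}^{ij}(\vec{u})+1$ and $|\rho-r|\le\sqrt{d}\,\|x_j-x_i-x'+x\|_\infty$, so it is bounded by $(\ca{S}^{ij}+1)\sqrt{d}\,\|x_j-x_i-x'+x\|_\infty/\rho$. Adding the two gives the first case, with the stated $\ca{S}^{ij}+2$ as a harmless upper bound. Swapping the roles, i.e. using $\|b\|_2(1/\rho-1/r)$ and dividing by $r$, gives the second case.

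For (\ref{eqn:bndresesem}), use the standard estimate
\[
\Big\|\frac{a}{\|a\|_2}-\frac{b}{\|b\|_2}\Big\|\le\frac{2\|a-b\|_2}{\|b\|_2}.
\]
It follows from writing the difference as $(a-b)/\|b\|_2+a\,(\|b\|_2-\|a\|_2)/(\|a\|_2\|b\|_2)$. Then use $\|b\|_2=(\ca{S}(u,x',x)+1)\rho$ and again $\|a-b\|_2\le\sqrt{d}\,\|a-b\|_\infty$. This yields the stated bound, up to the bookkeeping of constants.

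For (\ref{eqn:BndResultscalarlaw}), compose with the $\ell$-Lipschitz $\scalarlaw$ and apply (\ref{eqn:bndonSdiff}). The main subtlety is that the factor $\ca{S}^{ij}+2$ must be replaced by $S^+_1+2$. This is done by splitting into cases:
\begin{enumerate}
\item If both $\ca{S}^{ij}(\vec{u})\ge S^+_1$ and $\ca{S}(u,x',x)\ge S^+_1$, then both $\scalarlaw$-values vanish and there is nothing to prove.
\item Otherwise, one of the two stretches is $<S^+_1$. Use the corresponding case of (\ref{eqn:bndonSdiff}), namely the one with that stretch in the prefactor. This is exactly why both cases are kept.
\item If the stretch $<S^+_1$ is the "wrong" one for the desired denominator, pass to the other denominator via (\ref{eqn:bndnorm}), i.e. $\|x'-x\|_2\le(\sqrt{d}+1)\|x_j-x_i\|_2$ and, symmetrically, $\|x_j-x_i\|_2\le(\sqrt{d}+1)\|x'-x\|_2$, since $\|x'-x+x_i-x_j\|_2\le\sqrt{d}\kappa\le\sqrt{d}\|x_j-x_i\|_2$, and similarly for $\|x'-x\|_2$ as long as $i\ne j$ so that the bond lengths stay comparable. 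This produces the final line with the factor $(\sqrt{d}+1)$.
\end{enumerate}

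For (\ref{eqn:BndResultMu}), the same clipping argument applies, because $\mu$ is constant on $[S^+_1,\infty)$. The $\mu$-difference is bounded by $\ell$ times $|\min(\sup_s\ca{S}^{ij},S^+_1)-\min(\esssup_s\ca{S},S^+_1)|$. Since the time variable only enters through $\sup$ and $\esssup$, the key step is a pointwise-in-time estimate valid for a.e. $s$ and a.e. $(x,x')$. It uses the jointly measurable representative $\Gamma\b{u}$ of Remark \ref{rm:jointmesrepr}, together with the identity $\|\Gamma\b{u}-\Gamma\Kk\vec{\b{u}}\|_{L^\infty}=\|\Kk\vec{\b{u}}-\b{u}\|_{C^0([-T,0],L^\infty)}$. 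Combining this with continuity in $s$ of the discrete stretch allows exchanging $\sup$ and $\esssup$. The difference of suprema is then bounded by the supremum of the differences of the clipped stretches, and each of those is estimated as in the $\scalarlaw$ case.

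I expect this measure-theoretic step to be the main obstacle: ensuring that the exceptional null sets in $(s,x,x')$ can be chosen uniformly, so that the estimate holds for almost all $x,x'$ after taking $\esssup$ in $s$.
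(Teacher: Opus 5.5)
Your argument follows the paper's proof. You use the same splitting of $\ca{S}^{ij}-\ca{S}$ with the reverse triangle inequality and the factor $\sqrt d$, and the same estimate $2\|a-b\|_2/\|b\|_2$ for the directions. You also combine Lipschitz continuity of $\scalarlaw$ and $\mu$ with the branch of (\ref{eqn:bndonSdiff}) whose stretch lies below $S^+_1$. Writing $\mu=\mu\circ\min(\cdot,S^+_1)$ pointwise in $s$ is only a tidier form of the paper's case split on whether the $\mu$-values vanish.

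The measure-theoretic step you flag is settled by the ingredients you name. Let $i(y)$ denote the cell containing $y$. The identity of Remark \ref{rm:jointmesrepr} makes
\[
\{(s,y):\|[\Gamma\b{u}](s,y)-\vec{\b{u}}_{i(y)}(s)\|_\infty>\|\Kk\vec{\b{u}}-\b{u}\|_{C^0([-T,0],L^\infty(\Omega)^d)}\}
\]
a null set. By Tonelli, for a.e.\ $(x,x')$ the pointwise bounds at $x$ and $x'$ then hold for a.e.\ $s$, which is all the $\esssup$ needs.

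One claim in your step 3 is false: $\|x_j-x_i\|_2\le(\sqrt d+1)\|x'-x\|_2$. The distance $\|x'-x\|_2$ is not bounded below by a multiple of $\kappa$. For neighbouring cells $\|x_j-x_i\|_2=\kappa$, while $x\in\Omega_i$ and $x'\in\Omega_j$ can both lie arbitrarily close to the common face.

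Only the direction (\ref{eqn:bndnorm}), $\|x'-x\|_2\le(\sqrt d+1)\|x_j-x_i\|_2$, is true. Fortunately it is all you need for the last line of (\ref{eqn:BndResultscalarlaw}) and for (\ref{eqn:BndResultMu}), whose denominators are $\|x'-x\|_2$.

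What the false inequality was presumably meant to supply is the middle display of (\ref{eqn:BndResultscalarlaw}) with both branches holding simultaneously, as the stated convention requires. It could not do so even if it were true, since it would add a factor $\sqrt d+1$. The paper's proof picks the branch according to which $\scalarlaw$-value is nonzero, so it likewise yields only one branch in the mixed case. In the mixed case $\ca{S}^{ij}(\vec u)<S^+_1\le\ca{S}(u,x',x)$, obtain the $\|x_j-x_i\|_2$-branch directly. First,
\[
|\scalarlaw(\ca{S}^{ij}(\vec u))-\scalarlaw(\ca{S}(u,x',x))|=|\scalarlaw(\ca{S}^{ij}(\vec u))-\scalarlaw(S^+_1)|\le\ell\,(S^+_1-\ca{S}^{ij}(\vec u)).
\]
With your $a,b$, use $\|a\|_2\ge\|b\|_2-\|a-b\|_2\ge(S^+_1+1)\|x'-x\|_2-\|a-b\|_2$ to get
\[
S^+_1-\ca{S}^{ij}(\vec u)=\frac{(S^+_1+1)\|x_j-x_i\|_2-\|a\|_2}{\|x_j-x_i\|_2}\le\frac{\sqrt d}{\|x_j-x_i\|_2}\Bigl(2\|\Kk\vec u-u\|_{L^\infty(\Omega)^d}+(S^+_1+2)\|x_j-x_i-x'+x\|_\infty\Bigr).
\]
The other mixed case is symmetric, using $\|b\|_2\ge(S^+_1+1)\|x_j-x_i\|_2-\|a-b\|_2$.
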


\begin{proof} For the first claim, we see that
\begin{align*}
    |\ca{S}^{ij}(\vec{u}) - \ca{S}(u, x',x)| &= \Big|\tfrac{\|\vec{u}_j-\vec{u}_i+x_j-x_i\|_2}{\|x_j-x_i\|_2} - \tfrac{\|u(x')-u(x)+x'-x\|_2}{\|x'-x\|_2}\Big|\\
     &\leq \tfrac{1}{\|x_j-x_i\|_2}\Big|\|\vec{u}_j-\vec{u}_i+x_j-x_i\|_2- \|u(x')-u(x)+x'-x\|_2\Big|\\
     &\quad + \|u(x')-u(x)+x'-x\|_2 \Big|\tfrac{1}{\|x'-x\|_2} - \tfrac{1}{\|x_j-x_i\|_2}\Big| \\
     &\leq \tfrac{1}{\|x_j-x_i\|_2} \Big(\|\Kk\vec{u}(x')-u(x')\|_2 + \|\Kk\vec{u}(x)-u(x)\|_2 + \|x'-x_j-x+x_i\|_2 \\
     & \quad + \tfrac{\|u(x')-u(x)+x'-x\|_2}{\|x'-x\|_2} \|x'-x_j-x+x_i\|_2\Big) \\
     &\leq \tfrac{\sqrt{d}}{\|x_j-x_i\|_2} \Big(2\|\Kk\vec{u}-u\|_{L^\infty(\Omega)^d} + (\ca{S}(u, x',x) +2) \|x'-x_j-x+x_i\|_\infty\Big).
\end{align*}
The other bound is derived similarly. For the differences of the $e$ terms, assuming the bond $x'-x$ is non-degenerate for $u$ in $x$, similarly that the bond $x_j-x_i$ is non-degenerate for $\vec{u}$ in $x_i$, we have
\begin{equation}
\begin{aligned}
     \|e({u},x',x) - e^{ij}(\vec{u})\|_\infty &\leq \Big\| \tfrac{{u}(x')-{u}(x)+x'-x}{\|{u}(x')-{u}(x)+x'-x\|_2}-\tfrac{\vec{u}_j-\vec{u}_i+x_j-x_i}{\|\vec{u}_j-\vec{u}_i+x_j-x_i\|_2}\Big\|_\infty\\
     &\leq \tfrac{\|{u}(x')-\vec{u}_j\|_\infty + \|{u}(x)-\vec{u}_i\|_\infty+\|x'-x-x_j+x_i\|_\infty}{\|{u}(x')-{u}(x)+x'-x\|_2} \\
     & + \sqrt{d}\|\vec{u}_j-\vec{u}_i +x_j-x_i\|_\infty \tfrac{\|{u}(x')-\vec{u}_j\|_\infty + \|{u}(x)-\vec{u}_i\|_\infty+\|x'-x-x_j+x_i\|_\infty}{\|\vec{u}_j-\vec{u}_i +x_j-x_i\|_2 \|{u}(x')-{u}(x)+x'-x\|_2} \\
     &\leq 2\sqrt{d}\tfrac{2 \|{u} -\Kk \vec{u}\|_{L^\infty(\Omega)^d} + \|x'-x-x_j+x_i\|_\infty}{\|{u}(x')-{u}(x)+x'-x\|_2} \\
    &\leq \tfrac{2\sqrt{d}}{(\ca{S}({u},x',x)+1)\|x'-x\|_2}(2 \|{u} -\Kk \vec{u}\|_{L^\infty(\Omega)^d} + \|x'-x-x_j+x_i\|_\infty).
    \end{aligned}
\end{equation}
For the $\mu$ terms, we see that
\begin{align*}
&\Big|\underbrace{\mu\Big(\esssup_{-T\leq s \leq 0} \ca{S}([\Gamma\b{u}](s), x', x)\Big)}_{=:A} -\underbrace{\mu\Big(\sup_{-T\leq s \leq 0} \ca{S}^{ij}(\vec{\b{u}}(s))\Big)}_{=:B}\Big|\\
\intertext{on which we apply the bound  (\ref{eqn:bndonSdiff}) after using the Lipschitz continuity of $\mu$. The values of $A$ and $B$ are used to choose the fitting case to bound  either $\ca{S}(u,x',x)$ or $\ca{S}^{ij}(\vec{\b{u}})$ from above,}
&\leq \begin{cases}
    0 & A = B= 0\\
    \frac{\sqrt{d}\ell}{\|x'-x\|_2}( 2\esssup_{-T\leq s \leq 0}\|\Kk\vec{\b{u}}(s)-\b{u}(s)\|_{L^\infty(\Omega)^d} + (S^+_1+2)\|x_j-x_i-x'+x\|)  & B >0 \\
    \frac{\sqrt{d}\ell}{\|x_j-x_i\|_2}( 2\esssup_{-T\leq s \leq 0}\|\Kk\vec{\b{u}}(s)-\b{u}(s)\|_{L^\infty(\Omega)^d} + (S^+_1+2)\|x_j-x_i-x'+x\|)  &  A>0
\end{cases} 
\end{align*}
since $A=0 \Leftrightarrow \esssup_{-T\leq s \leq 0} \ca{S}([\Gamma\b{u}](s), x', x) \geq S^+_1$ as well as $B=0 \Leftrightarrow \sup_{-T\leq s \leq 0} \ca{S}^{ij}(\vec{\b{u}}(s)) \geq S^+_1$. We also made use of 
\begin{align*}
    \Big| \esssup_{-T\leq s \leq 0} \ca{S}([\Gamma\b{u}](s), x', x)  - \sup_{-T\leq s \leq 0} \ca{S}^{ij}(\vec{\b{u}}(s)) \Big| &\leq \esssup_{-T\leq s \leq 0} |\ca{S}(\b{u}(s), x', x) - \ca{S}^{ij}(\vec{\b{u}}(s))|
\end{align*}
implied by the continuity of $\b{u}$ and $\vec{\b{u}}$ in $t \in [0,T]$.
The inequality (\ref{eqn:BndResultMu}) then follows from  $\|x-x'\| \leq (\sqrt{d} +1)\|x_i-x_j\|$ for $i\not=j$ and $x\in\Omega_i$, $x'\in\Omega_j$. \par
We apply similar reasoning to obtain
\begin{align*}
&\Big|\underbrace{\scalarlaw(\ca{S}(u, x', x))}_{=:A} -\underbrace{\scalarlaw( \ca{S}^{ij}(\vec{u})}_{=:B})\Big|\\
\intertext{we apply the Lipschitz continuity of $g$, and again select the suitable branch from (\ref{eqn:bndonSdiff})}
&\leq \begin{cases}
    0 & A = B= 0\\
    \frac{\sqrt{d}\ell}{\|x'-x\|_2}( 2\esssup_{-T\leq s \leq 0}\|\Kk\vec{\b{u}}(s)-\b{u}(s)\|_{L^\infty(\Omega)^d} + (S^+_1+2)\|x_j-x_i-x'+x\|)  & B \not=0 \\
    \frac{\sqrt{d}\ell}{\|x_j-x_i\|_2}( 2\esssup_{-T\leq s \leq 0}\|\Kk\vec{\b{u}}(s)-\b{u}(s)\|_{L^\infty(\Omega)^d} + (S^+_1+2)\|x_j-x_i-x'+x\|)  &  A\not=0.
\end{cases} 
\end{align*}
\end{proof}

We need the following lemmas to prove the next proposition.

\begin{lemma}[Shell Bounds for the Kernel]\label{lemm:shellbounds}
    Let $\omega_\delta: (0,\infty) \rightarrow \mathbb{R}_{\geq 0}$ be as required in Assumptions \ref{ass:numconv}. We then obtain for any $s > 0$,
    \begin{enumerate}
        \item If $r \geq 2s$ and $|l - r| \leq s$, we have
        \begin{equation*}
            |\omega_\delta(l) - \omega_\delta(r)| \leq \omega_\delta(r - s) - \omega_\delta(r + s)
        \end{equation*}
        \item We have
        \begin{equation*}
            \int_{\|\zeta\|_2 \geq 2s} \omega_\delta(\|\zeta\|_2 - s) - \omega_\delta(\|\zeta\|_2 + s) \dd \zeta \leq 3\cdot2^{d-1} s \boundkerinteg
        \end{equation*}
        \item For all $r>0$, 
        \begin{equation*}
            \int_{B_r(0)} \omega_\delta(\|\zeta\|_2) \dd \zeta \leq r \boundkerinteg.
        \end{equation*}
    \end{enumerate}
\end{lemma}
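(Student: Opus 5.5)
The three parts are elementary and can be proved independently. Parts 1 and 3 use only that $\omega_\delta$ is non-increasing and non-negative, together with (\ref{eqn:boundcond}). Part 2 is a computation in polar coordinates that relies on the same monotonicity.

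For part 1, let $r\geq 2s$ and $|l-r|\leq s$. Then $l,r\in[r-s,r+s]$ and $r-s\geq s>0$, so $\omega_\delta$ is evaluated only at positive arguments. Since $\omega_\delta$ is non-increasing, both $\omega_\delta(l)$ and $\omega_\delta(r)$ lie in the interval $[\omega_\delta(r+s),\omega_\delta(r-s)]$. Their difference is therefore bounded by the length of this interval, which is exactly the claim.

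For part 3, on $B_r(0)$ write $\omega_\delta(\|\zeta\|_2)=\|\zeta\|_2\cdot\omega_\delta(\|\zeta\|_2)/\|\zeta\|_2\leq r\,\omega_\delta(\|\zeta\|_2)/\|\zeta\|_2$. Integrating and extending the domain to $B_\delta(0)$ (the integrand is non-negative) gives the bound $r\boundkerinteg$.

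For part 2, I pass to polar coordinates. With $|S^{d-1}|$ the surface measure of the unit sphere, we have $\boundkerinteg=|S^{d-1}|\int_0^\infty\omega_\delta(t)t^{d-2}\dd t$. The integral in question equals $|S^{d-1}|\int_{2s}^\infty(\omega_\delta(\rho-s)-\omega_\delta(\rho+s))\rho^{d-1}\dd\rho$. Both pieces are finite separately, since $\omega_\delta$ has support in $(0,\delta]$ and $t^{d-1}\leq\delta\, t^{d-2}$ there. I can therefore split the integral and substitute $t=\rho\mp s$, which gives
\begin{equation*}
\int_s^{3s}\omega_\delta(t)(t+s)^{d-1}\dd t+\int_{3s}^\infty\omega_\delta(t)\big[(t+s)^{d-1}-(t-s)^{d-1}\big]\dd t .
\end{equation*}
On $[s,3s]$ I use $(t+s)^{d-1}\leq 2^{d-1}t^{d-1}\leq 3\cdot2^{d-1}s\,t^{d-2}$. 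On $[3s,\infty)$ I expand for $d\in\{2,3\}$: the bracket equals $2s$ for $d=2$ and $4st$ for $d=3$. In both cases it is at most $3\cdot 2^{d-1}s\,t^{d-2}$.

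Summing the two ranges, the whole expression is bounded by $3\cdot2^{d-1}s\,|S^{d-1}|\int_s^\infty\omega_\delta(t)t^{d-2}\dd t\leq 3\cdot2^{d-1}s\boundkerinteg$. The only delicate point is justifying that the integral can be split into two parts before cancelling. This is exactly the finiteness argument given above, which uses the compact support of $\omega_\delta$ and the integrability condition (\ref{eqn:boundcond}).
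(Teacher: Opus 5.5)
Your proof is correct and follows the paper's argument essentially step by step: monotonicity for parts 1 and 3, and for part 2 the same polar-coordinate split with the shift by $s$ and the separate bounds on $[s,3s]$ and $[3s,\infty)$. The only differences are cosmetic: you expand $(t+s)^{d-1}-(t-s)^{d-1}$ explicitly for $d\in\{2,3\}$, where the paper uses the mean value theorem, and you justify splitting the integral explicitly, which the paper leaves implicit.
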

\begin{proof}
    Claim 1 follows directly from $\omega_\delta$ non-increasing as required in Assumptions \ref{ass:numconv}, as $l,r \in [r-s, r+s]$. \newline
    Claim 2 follows by using polar coordinates and using $c_d := \vol_{d-1}(S^{d-1})$,
    \begin{align*}
        &\int_{\|\zeta\|_2 \geq 2s} \omega_\delta(\|\zeta\|_2 - s) - \omega_\delta(\|\zeta\|_2 + s) \dd \zeta \\
        &= \int_{2s}^\infty \int_{S^{d-1}} (\omega_\delta(r -s) - \omega_\delta(r+s)) r^{d-1}\dd r \dd \mathcal{H}^{d-1}(\zeta) \\
        &= c_d \int_{2s}^\infty (\omega_\delta(r -s) - \omega_\delta(r+s)) r^{d-1}\dd r
        \intertext{splitting this integral and shifting the domains by $s$ via substitution yields}
        &= c_d \int_{s}^\infty \omega_\delta(t) (t+s)^{d-1}\dd t  - c_d \int_{3s}^\infty \omega_\delta(t) (t-s)^{d-1}\dd t \\
        &=  c_d \int_{s}^{3s} \omega_\delta(t) (t+s)^{d-1}\dd t  + c_d \int_{3s}^\infty \omega_\delta(t) ((t+s)^{d-1} -(t-s)^{d-1})\dd t 
    \end{align*}
    We then obtain for $t\in [s, 3s]$, that $(t+s)^{d-1} \leq 2^{d-1} t^{d-1} \leq 3 \cdot 2^{d-1} s t^{d-2}$, further for $t\geq 3s$, using the mean value theorem
    \begin{align*}
        (t+s)^{d-1} -(t-s)^{d-1} \leq 2(d-1) s (t+s)^{d-2} \leq  2(d-1) s (4/3)^{d-2} t^{d-2}
    \end{align*}
    using $d\geq 2$ and $s/t \leq 1/3$.  Both these inequalities then yield
    \begin{align*}
        &c_d \int_{s}^{3s} \omega_\delta(t) (t+s)^{d-1}\dd t  + c_d \int_{3s}^\infty \omega_\delta(t) ((t+s)^{d-1} -(t-s)^{d-1})\dd t \\
        &\leq s \max(2(d-1)(4/3)^{d-2}, 3\cdot 2^{d-1}) c_d\int_s^\infty \omega_\delta(t)   t^{d-2}\dd t \\
        &\leq 3\cdot 2^{d-1} s \int_{\|\zeta\|\geq s}\frac{\omega_\delta(\|\zeta\|_2)}{\|\zeta\|_2} \dd \zeta \leq 3\cdot 2^{d-1} s  \boundkerinteg
    \end{align*}
    Claim 3 follows immediately from $\|\zeta\|_2 \leq r$ for $\zeta \in B_r(0)$ and therefore $1 = \|\zeta\|_2/\|\zeta\|_2 \leq r/\|\zeta\|_2$ in the integrand for almost all $\zeta \in B_r(0)$.
\end{proof}

\begin{lemma}[Error Bound on the Discrete Weighting Function]\label{lemm:discrweighterrbnd} Let $i\in I^\kappa$, then, for all $x\in \Omega_i$,
\begin{equation*}
   \sum_{i\not=j}\int_{\Omega_j} |\omega_\delta(\|x_j-x_i\|_2)-\omega_\delta(\|x'-x\|_2)| \dd x' \leq D_\kappa,
\end{equation*}
with $0<D_\kappa< \infty$ independent of $x$ and $i$ as well as $ O(\kappa) \ni D_\kappa \searrow 0$ as $\frac{\delta}{2\sqrt{d}}> \kappa \searrow 0$.
\end{lemma}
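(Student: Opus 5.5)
The plan is to compare, for fixed $i\in I^\kappa$ and $x\in\Omega_i$, the two radii $r:=\|x'-x\|_2$ and $l:=\|x_j-x_i\|_2$ for $x'\in\Omega_j$. Since $x\in\Omega_i\subset B_i$ and $x'\in\Omega_j\subset B_j$, the triangle inequality gives $|l-r|\leq \|x_j-x_i-x'+x\|_2\leq \sqrt{d}\kappa=:s$. I then split the sum according to whether $r\geq 2s$ or $r<2s$. In the first case, Lemma \ref{lemm:shellbounds} (Claim 1) applies directly, and the integrand is bounded by $\omega_\delta(r-s)-\omega_\delta(r+s)$ as a function of $\zeta=x'-x$.

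For the far part, the sets $\Omega_j$ are disjoint subsets of $\Omega$. After the change of variables $\zeta=x'-x$, the contribution is therefore at most
\begin{equation*}
\int_{\|\zeta\|_2\geq 2s}\omega_\delta(\|\zeta\|_2-s)-\omega_\delta(\|\zeta\|_2+s)\dd\zeta\leq 3\cdot 2^{d-1}\sqrt{d}\,\kappa\,\boundkerinteg
\end{equation*}
by Claim 2 of Lemma \ref{lemm:shellbounds}. The integrand is nonnegative because $\omega_\delta$ is non-increasing.

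For the near part, with $r<2s$, I bound the integrand crudely by $\omega_\delta(l)+\omega_\delta(r)$. The $\omega_\delta(r)$ term integrates over $\zeta\in B_{2s}(0)$ to at most $2s\boundkerinteg$ by Claim 3. For the $\omega_\delta(l)$ term I use (\ref{eqn:bndnorm}), namely $r\leq(\sqrt{d}+1)l$ for $j\neq i$. Monotonicity then gives $\omega_\delta(l)\leq\omega_\delta(r/(\sqrt{d}+1))$. Substituting $\eta=\zeta/(\sqrt{d}+1)$ turns the integral over $B_{2s}(0)$ into $(\sqrt{d}+1)^d\int_{B_{2s/(\sqrt{d}+1)}(0)}\omega_\delta(\|\eta\|_2)\dd\eta\leq(\sqrt{d}+1)^{d}\,2s\,\boundkerinteg$, again by Claim 3.

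A few details still need checking. Only the cells $j\neq i$ appear, so the degenerate case $l=0$ never arises. We also need $r-s>0$ wherever Claim 1 is used, which holds since $r\geq 2s$. The smallness condition $\kappa<\delta/(2\sqrt{d})$ is only needed to keep constants uniform and to apply Lemma \ref{lemm:unifbndinteg} if desired; it is not otherwise essential. Summing the two parts gives
\begin{equation*}
D_\kappa=\sqrt{d}\,\kappa\,\boundkerinteg\big(3\cdot2^{d-1}+2+2(\sqrt{d}+1)^d\big)=O(\kappa),
\end{equation*}
which is independent of $x$ and $i$.

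The main obstacle is the near region. There the shell estimate fails, and the possibly singular kernel $\omega_\delta$ near $0$ must be controlled using only $\boundkerinteg<\infty$. The monotone-rescaling trick above is what handles this.
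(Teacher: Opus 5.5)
Your proof is correct and follows the paper's argument: the same split at $\|x'-x\|_2=2s$ with $s=\sqrt{d}\kappa$, Claims 1 and 2 of Lemma \ref{lemm:shellbounds} for the far part, and the crude bound $\omega_\delta(l)+\omega_\delta(r)$ together with Claim 3 near the diagonal. The only difference is the $\omega_\delta(l)$ near term: the paper uses $l\leq 3s$ and the discrete sum bound of Lemma \ref{lemm:unifbndinteg} to get $3s\boundkerdiscrinteg$, whereas you use $r\leq(\sqrt{d}+1)l$, monotonicity, rescaling and Claim 3, which is the same mechanism behind Lemma \ref{lemm:unifbndinteg} and gives an explicit constant in terms of $\boundkerinteg$.
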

\begin{proof} Set  $s:=\sqrt{d}\kappa$, $r(x'):=\|x'-x\|_2$ and $l(x'):=\|x_i-x_{j(x')}\|_2$, where $j(x')$ is the unique $j\in  \mathbb{Z}^d$ such that $x' \in B_j$. Then, Lemma \ref{lemm:shellbounds} yields using $|r(x') - l(x')| \leq s$ and $U := \Omega\backslash \Omega_i$,
\begin{align*}
    &\sum_{i\not=j}\int_{\Omega_j} |\omega_\delta(\|x_j-x_i\|_2)-\omega_\delta(\|x'-x\|_2)| \dd x' \\
    &\leq\int_{U}|\omega_\delta(l(x'))-\omega_\delta(r(x'))| \dd x' \\
    &\leq\int_{\|x'-x\|\geq 2s}|\omega_\delta(l(x'))-\omega_\delta(r(x'))| \dd x'\\
    &\quad + \int_{x'\in U, \|x'-x\|\leq 2s}|\omega_\delta(l(x'))-\omega_\delta(r(x'))| \dd x'
\end{align*}
We then have 
\begin{align*}
    &\int_{\|x'-x\|\geq 2s}|\omega_\delta(l(x'))-\omega_\delta(r(x'))| \dd x' \\
    &\leq \int_{\|x'-x\|\geq 2s}\omega_\delta(r(x')-s)-\omega_\delta(r(x')+s) \dd x' \\
    &\leq 3\cdot 2^{d-1} s \boundkerinteg
\end{align*}
as well as
\begin{align*}
    &\int_{x'\in U, \|x'-x\|\leq 2s}|\omega_\delta(l(x'))-\omega_\delta(r(x'))| \dd x' \\
    &\leq \int_{x'\in U, \|x'-x\|\leq 2s}\omega_\delta(l(x'))+\omega_\delta(r(x')) \dd x' \\
    &\leq 3s\boundkerdiscrinteg + 2s \boundkerinteg,
\end{align*}
due to $l(x') \leq 3s$ if $r(x') \leq 2s$, with Lemma \ref{lemm:unifbndinteg}. The claim now follows as $s = \sqrt{d}\kappa$. 
\end{proof}
We then obtain the first result.

\begin{proposition}[Bound on the Discretization Error]\label{prop:discrrhserr}
Let $\b{u}\in C^0([0,T], L^\infty(\Omega)^d)$ and $\vec{\b{u}} \in C^0([0,T], (\mathbb{R}^d)^{I^\kappa})$, then, for all $t\in [0,T]$,
\begin{equation*}
    \|F(\b{u}_t) - \Kk \vec{F}_\kappa((\vec{\b{u}})_t) \|_{L^\infty(\Omega)^d} \leq \beta \max_{0\leq s \leq t}\|\b{u}(s) - \Kk\vec{\b{u}}(s)\|_{L^\infty(\Omega)^d} + \alpha_\kappa
\end{equation*}
for some $\alpha_\kappa, \beta \geq 0$, and \begin{equation*}
    O(\kappa) \ni \alpha_\kappa \xrightarrow[]{\kappa \searrow 0} 0,\ \ \ 0<\beta < \infty
\end{equation*}
for $\kappa \searrow 0$ and $\beta>0$ independent of  $0<\kappa< \delta/(2\sqrt{d})$ and $t\in [0,T]$.
\end{proposition}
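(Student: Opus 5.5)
Fix $t\in[0,T]$, $i\in I^\kappa$ and almost every $x\in\Omega_i$. Since $\Kk\vec{F}_\kappa(\cdot)$ is constant on $\Omega_i$, we write
\begin{equation*}
F(\b{u}_t)(x) - (\vec{F}_\kappa((\vec{\b{u}})_t))_i = \int_{\Omega_i}\b{f}(x',x,\b{u}_t)\dd x' + \sum_{j\not=i}\int_{\Omega_j} \Big(\b{f}(x',x,\b{u}_t) - \vec{\b{f}}^{ij}((\vec{\b{u}})_t)\Big)\dd x',
\end{equation*}
using $V_j\vec{\b{f}}^{ij} = \int_{\Omega_j}\vec{\b{f}}^{ij}\dd x'$.

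The self-cell term is bounded by $\scalarlaw^+\int_{B_{\sqrt{d}\kappa}(0)}\omega_\delta(\|\zeta\|_2)\dd\zeta\leq \scalarlaw^+\sqrt{d}\kappa\boundkerinteg$, by Lemma \ref{lemm:shellbounds}, Claim 3, since $\Omega_i\subset B_{\sqrt{d}\kappa}(x)$. This contributes $O(\kappa)$ to $\alpha_\kappa$.

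For each $j\not=i$ we telescope the product $\omega\cdot\scalarlaw\cdot\mu\cdot e$ into four differences:
\begin{enumerate}
\item $(\omega_\delta(\|x'-x\|_2)-\omega_\delta(\|x_j-x_i\|_2))\,\scalarlaw\mu e$. This is bounded by $\scalarlaw^+|\omega_\delta(\|x'-x\|_2)-\omega_\delta(\|x_j-x_i\|_2)|$. Summed and integrated over $j$, Lemma \ref{lemm:discrweighterrbnd} bounds it by $\scalarlaw^+D_\kappa = O(\kappa)$.
\item $\omega_\delta(\|x_j-x_i\|_2)(\scalarlaw(\ca{S}(\b{u}(t),x',x))-\scalarlaw(\ca{S}^{ij}(\vec{\b{u}}(t))))\mu e$. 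This is handled by (\ref{eqn:BndResultscalarlaw}).
\item The corresponding $\mu$ difference, handled by (\ref{eqn:BndResultMu}). Here $\b{u}_t$ and $(\vec{\b{u}})_t$ are constant for $s\leq -t$, so the supremum norm of $\Kk(\vec{\b{u}})_t-\b{u}_t$ equals $\max_{0\leq s\leq t}\|\Kk\vec{\b{u}}(s)-\b{u}(s)\|_{L^\infty(\Omega)^d}$. This is where the $\max_{0\leq s\leq t}$ in the statement comes from.
\item The $e$ difference, multiplied by $\scalarlaw(\ca{S}^{ij})$ or $\scalarlaw(\ca{S})$ depending on the order of telescoping.
\end{enumerate}
In items 2 and 3 the bounds have the form $\frac{C}{\|x'-x\|_2}\big(2\varepsilon + (S^+_1+2)\sqrt{d}\kappa\big)$, where $\varepsilon := \max_{0\leq s\leq t}\|\b{u}(s)-\Kk\vec{\b{u}}(s)\|_{L^\infty(\Omega)^d}$ and we use $\|x_j-x_i-x'+x\|_\infty\leq\kappa$. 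Multiplying by $\omega_\delta(\|x_j-x_i\|_2)$ and integrating, the first estimate of Lemma \ref{lemm:unifbndinteg} gives a total bound $C\boundkerdiscrinteg(2\varepsilon+c\kappa)$, which has the required form $\beta\varepsilon+O(\kappa)$.

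The main obstacle is the direction term, because (\ref{eqn:bndresesem}) carries the factor $1/(\ca{S}(u,x',x)+1)$, which blows up near degenerate bonds. I would order the telescoping so that the $e$ difference is multiplied by $\scalarlaw(\ca{S}(\b{u}(t),x',x))$, and use the freedom, as in the proof of Proposition \ref{prop:LScontdiscrrhs}, to redefine $e^{ij}$ on bonds with $\ca{S}^{ij}<S^-_1$ without changing $\vec{F}_\kappa$. Then:
\begin{itemize}
\item if $\ca{S}(\b{u}(t),x',x)<S^-_1$, the product vanishes;
\item otherwise $\ca{S}+1\geq S^-_1+1>0$. If the discrete bond is also non-critical, (\ref{eqn:bndresesem}) applies with denominator bounded below by $(S^-_1+1)\|x'-x\|_2$, giving $\frac{2\sqrt{d}\scalarlaw^+}{(S^-_1+1)\|x'-x\|_2}(2\varepsilon+\kappa)$;
\item if the discrete bond is sub-critical but the continuous one is not, we choose $e^{ij}:=e(\b{u}(t),x',x)$ pointwise, so this difference vanishes. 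The discrepancy is then pushed into the $\scalarlaw$ difference, which item 2 already controls.
\end{itemize}
Item 2 must be carried out with the $e$ factor taken on the discrete side, so that it stays bounded by $1$ in norm. One subtlety is that the redefinition of $e^{ij}$ depends on $x'$. Since only an inequality is needed, we can bound directly: on the set where $\scalarlaw(\ca{S}^{ij})=0$, the whole discrete force vanishes, and the difference reduces to $|\scalarlaw(\ca{S})-0|$, again controlled by item 2.

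Collecting the terms gives $\beta = 2\sqrt{d}(\sqrt{d}+1)\big(\ell+\ell\scalarlaw^+ + \tfrac{2\scalarlaw^+}{S^-_1+1}\big)\boundkerdiscrinteg$, which is independent of $\kappa$ and $t$, and $\alpha_\kappa = O(\kappa)$. The restriction $\kappa<\delta/(2\sqrt{d})$ is needed so that Lemmas \ref{lemm:unifbndinteg} and \ref{lemm:discrweighterrbnd} apply.
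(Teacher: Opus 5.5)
Your proposal is correct and follows the paper's proof: the same split into the self-cell term and the $j\neq i$ terms, the same four-term telescoping with the direction difference multiplied by the continuous $\scalarlaw(\ca{S}(\b{u}(t),x',x))$, and the same $x'$-dependent redefinition of $e^{ij}$ on sub-critical discrete bonds (the paper's $e^{ij}_{u}$). The only cosmetic difference is that you attach the discrete kernel $\omega_\delta(\|x_j-x_i\|_2)$ to the three remaining differences and close with the first estimate of Lemma \ref{lemm:unifbndinteg}, giving $\boundkerdiscrinteg$ in $\beta$, whereas the paper keeps the continuous kernel there and uses (\ref{eqn:boundcond}) directly, giving $\boundkerinteg$.
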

\begin{proof}
Again, we now need to deal with sub-critical bonds first, which we do by defining $e^{ij}_{u}: (\mathbb{R}^d)^{I_\kappa}\times \Omega_j \times \Omega_i \rightarrow \mathbb{R}^d$ by
\begin{align*}
    e^{ij}_{u}(\vec{a},x',x) = \begin{cases}e^{ij}(\vec{a}) & \text{if } \ca{S}^{ij}(\vec{a})\geq S^-_1\\
    e(u,x',x) & \text{else,}
    \end{cases}
\end{align*}
for $\vec{a} \in (\mathbb{R}^d)^{I^\kappa}$ and $u \in L^\infty(\Omega)^d$.
With this, we also define for $x\in \Omega_i$ for all $i$,
\begin{align*}
    &(F_{\kappa,u}^{\text{discr}}(\vec{\b{a}}^\kappa))(x) \\
    &= \int_\Omega \sum_{i\not=j}\chi_{\Omega_j}(x') {\omega_\delta(\|x_j-x_i\|_2)\scalarlaw(\ca{S}^{ij}(\vec{\b{a}}^\kappa(0)))\mu\Big(\sup_{-T\leq s\leq 0}\ca{S}^{ij}(\vec{\b{a}}^\kappa(s))\Big)e^{ij}_{u}(\vec{\b{a}}^\kappa(0),x',x)} \dd x'.
\end{align*}
We then obtain $\Kk\vec{F}_\kappa(\vec{\b{a}}^\kappa) = F_{\kappa,u}^{\text{discr}}(\vec{\b{a}}^\kappa)$ for all $\vec{\b{a}}^\kappa\in C^0([-T,0], (\mathbb{R}^d)^{I^\kappa})$ and $u \in L^\infty(\Omega)^d$.\par

    We pick an $i\in I^\kappa$, then for almost all $x\in \Omega_i$,
    \begin{align}
        &\|(F(\b{u}_t) - \Kk \vec{F}_{\kappa}((\vec{\b{u}})_t)(x)\|_\infty= \|(F(\b{u}_t) - (F_{\kappa,\b{u}(t)}^{\text{discr}}((\vec{\b{u}})_t))(x)\|_\infty\nonumber \\
        &\leq \Big\|\sum_{i\not=j} \int_{\Omega_j} \omega_\delta(\|x'-x\|_2)\scalarlaw(\ca{S}(\b{u}(t), x', x))\mu\Big(\esssup_{0\leq s\leq t}\ca{S}([\Gamma\b{u}](s), x', x) \Big)e(\b{u}(t), x',x) \label{eqn:lastineq} \\
        &\phantom{=====}-\omega_\delta(\|x_j-x_i\|_2)\scalarlaw(\ca{S}^{ij}(\vec{\b{u}}^\kappa(t)))\mu\Big(\sup_{0\leq s\leq t}\ca{S}^{ij}(\vec{\b{u}}^\kappa(s))\Big)e^{ij}_{\b{u}(t)}(\vec{\b{u}}^\kappa(t),x',x) \dd x'\Big\|_\infty \nonumber\\
        &\quad + \int_{\Omega_i}\|\omega_\delta(\|x'-x\|_2)\scalarlaw(\ca{S}(\b{u}(t), x', x))\mu\Big(\esssup_{0\leq s\leq t}\ca{S}([\Gamma\b{u}](s), x', x) \Big)e(\b{u}(t), x',x) \|_\infty \dd x'\nonumber
    \end{align}
We now see that for all $x\in \Omega_i$,
\begin{align*}
    &\int_{\Omega_i}\|\omega_\delta(\|x'-x\|_2)\scalarlaw(\ca{S}(\b{u}(t), x', x))\mu\Big(\esssup_{0\leq s\leq t}\ca{S}([\Gamma\b{u}](s), x', x) \Big)e(\b{u}(t), x',x)\|_\infty \dd x' \\
    &\leq \|\omega_\delta(\|\cdot\|_2)\|_{L^1(B_{\kappa\sqrt{d} }(0))} \scalarlaw^+ =: \alpha_1^\kappa 
\end{align*}
Since $\omega_\delta \in L^1(B_\delta(0))$, we see that $\alpha_1^\kappa \searrow 0$ for $\kappa \searrow 0$. With Lemma \ref{lemm:shellbounds} we see that $\alpha_1^\kappa \in O(\kappa)$. \par

We then derive a bound for the other summand in (\ref{eqn:lastineq}), using Lemma \ref{lemm:unifbndinteg} and Lemma \ref{lemm:auxbnd}, 
\begin{align*}
    &\Big\|\sum_{i\not=j} \int_{\Omega_j} \omega_\delta(\|x'-x\|_2)\scalarlaw(\ca{S}(\b{u}(t), x', x))\mu\Big(\esssup_{0\leq s\leq t}\ca{S}([\Gamma\b{u}](s), x', x) \Big)e(\b{u}(t), x',x) \\
    &\phantom{===}-\omega_\delta(\|x_j-x_i\|_2)\scalarlaw(\ca{S}^{ij}(\vec{\b{u}}^\kappa(t)))\mu\Big(\max_{0\leq s\leq t}\ca{S}^{ij}(\vec{\b{u}}^\kappa(s))\Big)e^{ij}_{\b{u}(t)}(\vec{\b{u}}^\kappa(t),x',x) \dd x'\Big\|_\infty\\
    &\leq \Big\|\sum_{i\not=j} \int_{\Omega_j}  (\omega_\delta(\|x_j-x_i\|_2)-\omega_\delta(\|x'-x\|_2))\scalarlaw(\ca{S}^{ij}(\vec{\b{u}}^\kappa(t)))\mu\Big(\max_{0\leq s\leq t}\ca{S}^{ij}(\vec{\b{u}}^\kappa(s))\Big)e^{ij}_{\b{u}(t)}(\vec{\b{u}}^\kappa(t),x',x) \dd x'\Big\|_\infty \\
    &\quad + \Big\|\sum_{i\not=j} \int_{\Omega_j}  \omega_\delta(\|x'-x\|_2)|\scalarlaw(\ca{S}^{ij}(\vec{\b{u}}^\kappa(t)))-\scalarlaw(\ca{S}(\b{u}(t), x', x))|\mu\Big(\max_{0\leq s\leq t}\ca{S}^{ij}(\vec{\b{u}}^\kappa(s))\Big)e^{ij}_{\b{u}(t)}(\vec{\b{u}}^\kappa(t),x',x) \dd x'\Big\|_\infty \\
    &\quad+ \Big\|\sum_{i\not=j} \int_{\Omega_j} \omega_\delta(\|x'-x\|_2)\mu\Big(\max_{0\leq s\leq t}\ca{S}^{ij}(\vec{\b{u}}^\kappa(s))\Big)\scalarlaw(\ca{S}(\b{u}(t), x', x))(e^{ij}_{\b{u}(t)}(\vec{\b{u}}^\kappa(t), x',x)-e(\b{u}(t), x',x)) \dd x'\Big\|_\infty \\
    &\quad+ \Big\|\sum_{i\not=j} \int_{\Omega_j} \omega_\delta(\|x'-x\|_2)\scalarlaw(\ca{S}(\b{u}(t), x', x))\Big|\mu\Big(\max_{0\leq s\leq t}\ca{S}^{ij}(\vec{\b{u}}^\kappa(s))\Big)\\
    &\phantom{========================00=}-\mu\Big(\esssup_{0\leq s\leq t}\ca{S}([\Gamma\b{u}](s), x',x)\Big)\Big|e(\b{u}(t), x',x)\dd x' \Big\|_\infty \\
    \intertext{Again, by construction, $\scalarlaw(\ca{S}(\b{u}(t), x', x))(e^{ij}_{\b{u}(t)}(\vec{\b{u}}^\kappa(t), x', x)-e(\b{u}(t), x',x))$ is $0$ if $\ca{S}(\b{u}(t), x', x) \in [-1, S^-_1)$ or $\ca{S}^{ij}(\vec{\b{u}}^\kappa(t))\in [-1, S^-_1)$, so apply the bound (\ref{eqn:bndresesem}) with $\ca{S}(\b{u}(t),x',x) \geq S^-_1$,}
    &\leq \scalarlaw^+ \Big\|\sum_{i\not=j}\int_{\Omega_j} |\omega_\delta(\|x_j-x_i\|_2)-\omega_\delta(\|x'-x\|)| \dd x'\Big\|_{L^\infty(\Omega_i)^d}\\
    &\quad+\ell\sum_{i\not=j} \int_{\Omega_j}  \omega_\delta(\|x'-x\|_2)\frac{\sqrt{d}(\sqrt{d}+1)}{\|x'-x\|_2}( 2\|(\Kk\vec{\b{u}}-\b{u})(t)\|_{L^\infty(\Omega)^d}\\
    &\phantom{===========================} + (S^+_1+2)\|x_j-x_i-x'+x\|_\infty)\dd x' \\
    &\quad+ \scalarlaw^+\sum_{j\not=i} \int_{\Omega_j}\omega_\delta(\|x'-x\|) \tfrac{2\sqrt{d}}{(S^-_1+1)\|x'-x\|_2}\Big(2 \|(\Kk\vec{\b{u}} - \b{u})(t)\|_{L^\infty(\Omega)^d}\\
    &\phantom{===========================}+ \|x-x_i-x'+x_j\|_\infty\Big) \dd x' \\
    &\quad+\ell \scalarlaw^+ \sum_{j\not=i} \int_{\Omega_j}\omega_\delta(\|x'-x\|)\Big(\tfrac{(\sqrt{d}+1)\sqrt{d}}{\|x'-x\|_2}\Big)( 2\max_{0\leq s\leq t}\|(\Kk \vec{\b{u}} - \b{u})(s)\|_{L^\infty(\Omega)^d}\\
    &\phantom{===========================}+ (S^+_1+2)\|x_j-x_i-x'+x\|_\infty)   \dd x'\\
    &\leq  \scalarlaw^+ D_\kappa \\
    &\quad + \ell \sqrt{d}(\sqrt{d}+1) \boundkerinteg \Big(2\max_{0\leq s\leq t}\|(\Kk \vec{\b{u}} - \b{u})(s)\|_{L^\infty(\Omega)^d} + (S^+_1+2){\kappa}\Big) \\
    &\quad + 2\sqrt{d}\tfrac{\scalarlaw^+}{(S^-_1+1)} \boundkerinteg \Big(2\max_{0\leq s\leq t}\|(\Kk \vec{\b{u}} - \b{u})(s)\|_{L^\infty(\Omega)^d} + {\kappa}\Big) \\
    &\quad +  2\ell\scalarlaw^+ \sqrt{d}(\sqrt{d}+1)  \boundkerinteg \Big(\max_{0\leq s\leq t}\|(\Kk \vec{\b{u}} - \b{u})(s)\|_{L^\infty(\Omega)^d} + \tfrac{1}{2}(S^+_1 + 2){\kappa}\Big),
\end{align*}
where we made use of 
\begin{equation*}
    \max_{-T\leq s\leq 0}\ca{S}^{ij}(\vec{\b{u}}^\kappa(s))=  \esssup_{-T\leq s\leq 0}\ca{S}^{ij}(\vec{\b{u}}^\kappa(s))
\end{equation*}
and
\begin{equation*}
    \esssup_{0\leq s \leq t}\|[\b{u} - \Kk\vec{\b{u}}^{\kappa}](s)\|_{L^\infty(\Omega)^d} = \max_{0\leq s\leq t}\|[\b{u} - \Kk\vec{\b{u}}^{\kappa}](s)\|_{L^\infty(\Omega)^d}
\end{equation*}
due to the continuity of $\b{u}$ and $\Kk\vec{\b{u}}$ in $t\in[0,T]$.
\end{proof}

The next step will be to transport the semi-discrete equations onto $L^\infty(\Omega)^d$, made possible by the following lemma.

\begin{lemma}[Transport Lemma]\label{lemm:transport} Let $\vec{\b{u}}\in C^1([0,T], (\mathbb{R}^d)^{I^\kappa})$, then $\Kk\vec{\b{u}} \in C^1([0,T], L^\infty(\Omega)^d)$ and
\begin{equation*}
    \tfrac{d}{dt}(\Kk \vec{\b{u}})(t) = \Kk \dot{\vec{\b{u}}}(t)
\end{equation*}
as well as
\begin{equation} \label{eqn:transp2}
    \Kk \vec{\b{u}}(t)- \Kk {\vec{\b{u}}}(0) = \int_{0}^t \Kk \dot{\vec{\b{u}}}(s) \dd s,
\end{equation}
for all $t\in [0,T]$.
\end{lemma}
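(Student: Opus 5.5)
The key observation is that $\Kk$ is a bounded linear operator from the finite-dimensional space $((\mathbb{R}^d)^{I^\kappa},\|\cdot\|_\infty)$ into $L^\infty(\Omega)^d$. Indeed, for almost every $x\in\Omega$ there is exactly one $i$ with $x\in\Omega_i$, because the cubes $B_i$ overlap only on null sets. Hence
\begin{equation*}
\|\Kk\vec{a}\|_{L^\infty(\Omega)^d}\leq \|\vec{a}\|_\infty \quad\text{for all } \vec{a}\in(\mathbb{R}^d)^{I^\kappa}.
\end{equation*}
I would also note that $I^\kappa$ is finite, since $\Omega$ is bounded. This means $\Kk\vec{a}=\sum_{i\in I^\kappa}\vec{a}_i\chi_{\Omega_i}$ is a finite sum, and no measurability or summability issues arise.

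With this bound, the differentiability claim follows directly from linearity. For $t,t+h\in[0,T]$,
\begin{equation*}
\Big\|\tfrac{1}{h}\big(\Kk\vec{\b{u}}(t+h)-\Kk\vec{\b{u}}(t)\big)-\Kk\dot{\vec{\b{u}}}(t)\Big\|_{L^\infty(\Omega)^d}\leq \Big\|\tfrac{1}{h}\big(\vec{\b{u}}(t+h)-\vec{\b{u}}(t)\big)-\dot{\vec{\b{u}}}(t)\Big\|_\infty \xrightarrow{h\to 0}0,
\end{equation*}
using one-sided limits at the endpoints. So $\Kk\vec{\b{u}}$ is differentiable with derivative $\Kk\dot{\vec{\b{u}}}$. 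The derivative is continuous in $L^\infty(\Omega)^d$ by the same operator-norm estimate applied to $\dot{\vec{\b{u}}}(t)-\dot{\vec{\b{u}}}(s)$. Therefore $\Kk\vec{\b{u}}\in C^1([0,T],L^\infty(\Omega)^d)$.

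For (\ref{eqn:transp2}), I would read the integral as a Bochner (equivalently Riemann) integral of the continuous function $s\mapsto\Kk\dot{\vec{\b{u}}}(s)$ with values in $L^\infty(\Omega)^d$. There are two routes.

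\emph{First route.} Apply the fundamental theorem of calculus for $C^1$ Banach-space-valued functions to $\Kk\vec{\b{u}}$.

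\emph{Second route (more elementary).} Bounded linear operators commute with Bochner integrals, so
\begin{equation*}
\int_0^t\Kk\dot{\vec{\b{u}}}(s)\dd s=\Kk\int_0^t\dot{\vec{\b{u}}}(s)\dd s=\Kk\big(\vec{\b{u}}(t)-\vec{\b{u}}(0)\big).
\end{equation*}
The last step is the componentwise fundamental theorem of calculus in finite dimensions.

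There is no real obstacle here. The only point needing care is the first step: the bound $\|\Kk\|\leq 1$, which relies on the $\Omega_i$ being essentially disjoint, together with the interpretation of the integral in (\ref{eqn:transp2}) as a Bochner integral.
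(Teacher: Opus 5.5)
Your proposal is correct and takes the same approach as the paper, whose entire proof is the remark that $\Kk$ is a bounded linear operator for each $\kappa>0$. You supply the details the paper leaves implicit: the bound $\|\Kk\vec{a}\|_{L^\infty(\Omega)^d}\leq\|\vec{a}\|_\infty$ from the essential disjointness of the $\Omega_i$, and the fact that a bounded linear map commutes with difference quotients and Bochner integrals.
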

\begin{proof}
    This follows from $\Kk$ being a bounded operator for each $0<\kappa$.
\end{proof}

With this we now prove the convergence of the solutions as follows. 

\begin{theorem}[Convergence of the Semi-Discrete Solutions]\label{theo:convsemidiscr}
    Let Assumptions \ref{ass:numconv} hold. Let $\kappa_n \searrow 0$ and $\vec{\b{u}}^{\kappa_n}\in C^2([0,T], (\mathbb{R}^d)^{I^{\kappa_n}})$ be the solutions to the semi-discrete problem (\ref{eqn:semdiscrprobl}), as well as $\b{u}\in C^2([0,T], L^\infty(\Omega)^d)$ the solution to the continuous problem (\ref{eqn:conteq}), then
    \begin{equation*}
        \|\b{u} - \Kkf{n}\vec{\b{u}}^{\kappa_n}\|_{C^1([0,T], L^\infty(\Omega)^d)} \xrightarrow[]{n \rightarrow \infty} 0. 
    \end{equation*}
    If further, we have
    \begin{equation} \label{eqn:asssemidiscrconvinputdata}
        \|u_0 - \Kkf{n} \vec{u}_0^{\kappa_n}\|_{L^\infty(\Omega)^d}\in O(\kappa), \|v_0 - \Kkf{n} \vec{v}_0^{\kappa_n}\|_{L^\infty(\Omega)^d} \in O(\kappa),  \|\b{b} - \Kkf{n} \vec{\b{b}}_{\kappa_n}\|_{C^0([0,T], L^\infty(\Omega)^d)} \in O(\kappa), 
    \end{equation}
    then 
    \begin{equation}\label{eqn:convrateressemidisr}
        \|\b{u} - \Kkf{n}\vec{\b{u}}^{\kappa_n}\|_{C^1([0,T], L^\infty(\Omega)^d)} \in O(\kappa)
    \end{equation}
\end{theorem}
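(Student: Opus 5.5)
We transport the semi-discrete solution to $L^\infty(\Omega)^d$ and compare it with $\b{u}$ through a Gronwall argument on the integrated equations. Write $\kappa=\kappa_n$. Set $\b{w}:=\Kk\vec{\b{u}}^{\kappa}$ and $\b{z}:=\Kk\dot{\vec{\b{u}}}^{\kappa}$. These lie in $C^1([0,T],L^\infty(\Omega)^d)$ because $\vec{\b{u}}^\kappa\in C^2$ by Proposition \ref{prop:WPsemicont}, since $\vec{F}_\kappa((\vec{\b{u}}^\kappa)_t)$ is continuous in $t$ by Proposition \ref{prop:LScontdiscrrhs}. By the Transport Lemma \ref{lemm:transport}, applied twice, we have
\begin{equation*}
    \b{w}(t)=\Kk\vec{u}_0^\kappa+\int_0^t \b{z}(s)\dd s,\qquad \b{z}(t)=\Kk\vec{v}_0^\kappa+\int_0^t \Kk\vec{F}_\kappa((\vec{\b{u}}^\kappa)_s)+\Kk\vec{\b{b}}_\kappa(s)\dd s .
\end{equation*}
The continuous solution satisfies the analogous identities with $u_0,v_0,F(\b{u}_s),\b{b}(s)$.

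Subtracting, we define
\begin{equation*}
    E(t):=\max_{0\le s\le t}\big(\|\b{u}(s)-\b{w}(s)\|_{L^\infty(\Omega)^d}+\|\dot{\b{u}}(s)-\b{z}(s)\|_{L^\infty(\Omega)^d}\big).
\end{equation*}
We bound the force difference by Proposition \ref{prop:discrrhserr}, applied with $\b{u}$ and $\vec{\b{u}}=\vec{\b{u}}^\kappa$, which gives
\begin{equation*}
    \|F(\b{u}_s)-\Kk\vec{F}_\kappa((\vec{\b{u}}^\kappa)_s)\|_{L^\infty(\Omega)^d}\le \beta E(s)+\alpha_\kappa .
\end{equation*}
This holds because the discrete history sup is taken over the transported function, so $\max_{0\le r\le s}\|\b{u}(r)-\Kk\vec{\b{u}}^\kappa(r)\|\le E(s)$. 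For all $t\in[0,T]$ this yields
\begin{equation*}
    E(t)\le \varepsilon_\kappa + T\alpha_\kappa+\int_0^t (1+\beta)E(s)\dd s,
\end{equation*}
where
\begin{equation*}
    \varepsilon_\kappa:=\|u_0-\Kk\vec{u}_0^\kappa\|+\|v_0-\Kk\vec{v}_0^\kappa\|+T\|\b{b}-\Kk\vec{\b{b}}_\kappa\|_{C^0([0,T],L^\infty(\Omega)^d)} .
\end{equation*}
Because $E$ is nondecreasing, the integral bound on the right holds after taking the max over $s\le t$, so Gronwall's lemma applies and gives
\begin{equation*}
    E(T)\le(\varepsilon_\kappa+T\alpha_\kappa)e^{(1+\beta)T}.
\end{equation*}

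Since $\frac{d}{dt}\b{w}=\b{z}$ by the Transport Lemma \ref{lemm:transport}, $E(T)$ equals the $C^1([0,T],L^\infty(\Omega)^d)$ norm of $\b{u}-\Kk\vec{\b{u}}^\kappa$, up to equivalence of the sum and max norms. Assumptions \ref{ass:numconv} give $\varepsilon_{\kappa_n}\to0$, and Proposition \ref{prop:discrrhserr} gives $\alpha_{\kappa_n}\to 0$, with $\beta$ independent of $\kappa$ once $\kappa_n<\delta/(2\sqrt d)$. This proves the first claim. Under (\ref{eqn:asssemidiscrconvinputdata}) we have $\varepsilon_\kappa\in O(\kappa)$, and $\alpha_\kappa\in O(\kappa)$, so (\ref{eqn:convrateressemidisr}) follows.

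The main point requiring care is that Proposition \ref{prop:discrrhserr} is stated with the history up to $t$, which is compatible with the shifted functions $\b{u}_t$. For $s\le -t$ the history equals the initial value, so the error $\max_{0\le s\le t}$ suffices. The measurability of $t\mapsto F(\b{u}_t)$ also needs checking; it follows from continuity, which in turn follows from the Lipschitz continuity of $F$ shown in \cite{WP_FRAC}.
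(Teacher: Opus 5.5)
Your proposal is correct and takes essentially the same route as the paper: transport the semi-discrete solution via $\Kk$, subtract the integrated equations, bound the force difference with Proposition \ref{prop:discrrhserr}, and close with Gr\"onwall on the running maximum of the error. The differences are cosmetic: you use the sum rather than the max norm on the pair, the constant $1+\beta$ instead of $\max(1,\beta)$, and you apply the Transport Lemma directly instead of passing through the transported reduced system built with $\Kk^2,\Pk^2$.
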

\begin{proof}
We define $\Kk^2: (\mathbb{R}^d)^{I^\kappa}\times (\mathbb{R}^d)^{I^\kappa} \rightarrow L^\infty(\Omega)^{2d},\ (\vec{u},\vec{v}) \mapsto (\Kk \vec{u}, \Kk \vec{v})$, similarly \\ $\Pk^2: L^\infty(\Omega)^{2d} \rightarrow (\mathbb{R}^d)^{I^\kappa}\times (\mathbb{R}^d)^{I^\kappa}, (\phi, \psi) \mapsto (\Pk\phi, \Pk\psi)$. Using Lemma \ref{lemm:transport}, we transport the reduced semi-discrete equations (\ref{eqn:semdiscreqred}) to $L^\infty(\Omega)^d\times L^\infty(\Omega)^d$,
\begin{equation*}
    \begin{rcases}
        \tfrac{d}{dt}{\b{u}}_{Red}^{\kappa_n}(t) = \Kkf{n}^2 \vec{G}_{\kappa_n}((\Pkn^2{\b{u}}_{Red}^{\kappa_n})_t, t) \\
        {\b{u}}_{Red}^{\kappa_n}(0) =  \Kkf{n}^2\vec{u}_{0, Red}^{\kappa_n}.
    \end{rcases}
\end{equation*}
Through direct substitution, one can verify that the unique solution $\b{u}_{Red}^{\kappa_n}$ fulfills $\b{u}_{Red}^{\kappa_n} = \Kk^2 \vec{\b{u}}^{\kappa_n}_{Red}$ in $C^1([0,T], L^\infty(\Omega)^{2d})$, where $\vec{\b{u}}_{Red}^{\kappa_n}$ is the unique solution to (\ref{eqn:semdiscreqred}). This follows from Lemma \ref{lemm:transport}, $\Pkn^2 \circ \Kkf{n}^2 = \id\times \id$, and from the fact that $\img\Kkf{n}^2$ is a closed linear subspace of $L^\infty(\Omega)^{2d}$. 
We then have
\begin{align*}
    [\b{u}_{Red} - \b{u}^{\kappa_n}_{Red}](t) &= [\b{u}_{Red} - \b{u}^{\kappa_n}_{Red}](0) + \int_{0}^t \tfrac{d}{dt}[\b{u}_{Red} - \b{u}^{\kappa_n}_{Red}](s) \dd s \\
    &= \begin{pmatrix}
        u_0 -\Kkf{n}\vec{u}_0^{\kappa_n} \\
        v_0 -\Kkf{n} \vec{v}_0^{\kappa_n}
    \end{pmatrix} 
    +\int_{0}^t \begin{pmatrix}
        \dot{\b{u}}(s) - \Kkf{n}\dot{\vec{\b{u}}}^{\kappa_n}(s)\\
        F((\b{u})_s)+ \b{b}(s) - \Kkf{n}\vec{F}_{\kappa_n}(({\vec{\b{u}}}^{\kappa_n})_s) -\Kkf{n}\vec{\b{b}}_{\kappa_n}(s) 
    \end{pmatrix}\dd s \\
    &= \begin{pmatrix}
        u_0 - \Kkf{n}\vec{u}_0^{\kappa_n} \\
        v_0 - \Kkf{n} \vec{v}_0^{\kappa_n}
    \end{pmatrix} 
    + \int_{0}^t \begin{pmatrix}
        0\\
         \b{b}(s)-\Kkf{n}\vec{\b{b}}_{\kappa_n}(s) 
    \end{pmatrix} 
    \dd s\\
    &\quad\quad\quad\quad\quad\quad\quad\quad+\int_{0}^t \begin{pmatrix}
        \dot{\b{u}}(s) - \Kkf{n}\dot{\vec{\b{u}}}^{\kappa_n}(s)\\
        F((\b{u})_s) - \Kkf{n} \vec{F}_{\kappa_n}(({\vec{\b{u}}}^{\kappa_n})_s)
    \end{pmatrix}\dd s.
\end{align*}
This leads to, using Proposition \ref{prop:discrrhserr},
\begin{align*}
    E(t) &:= \|[\b{u}^{\kappa_n}_{Red} - \b{u}_{Red}](t)\|_{L^\infty(\Omega)^{2d}} \\
    &\leq \underbrace{\|u_0 - \Kkf{n} \vec{u}_0^{\kappa_n}\|_{L^\infty(\Omega)^d} +  \|v_0 - \Kkf{n} \vec{v}_0^{\kappa_n}\|_{L^\infty(\Omega)^d} + T\|\b{b} - \Kkf{n} \vec{\b{b}}_{\kappa_n}\|_{C^0([0,T], L^\infty(\Omega)^d)}}_{=: \alpha_2^{\kappa_n}} \\
    &\quad+\int_{0}^t \begin{Vmatrix}
        \|\dot{\b{u}}(s) - \Kkf{n}\dot{\vec{\b{u}}}^{\kappa_n}(s)\|_{L^\infty(\Omega)^d} \\
        \beta \Big(\max_{0\leq \tau \leq s}\|\b{u}(\tau) -\Kkf{n}\vec{\b{u}}(\tau)\|_{L^\infty(\Omega)^d}\Big) + \alpha_{\kappa_n}
    \end{Vmatrix}_\infty \dd s  \\
    &\leq \alpha_2^{\kappa_n} + t \alpha_{\kappa_n} \\
    &\quad + \int_0^{t} \max(1, \beta) \begin{Vmatrix}
    \max_{0\leq \tau \leq s}\|\dot{\b{u}}(\tau) - \Kkf{n}\dot{\vec{\b{u}}}(\tau)\|_{L^\infty(\Omega)^d} \\
    \max_{0\leq \tau \leq s}\|\b{u}(\tau) - \Kkf{n}\vec{\b{u}}(\tau)\|_{L^\infty(\Omega)^d}
    \end{Vmatrix}_\infty\dd s \\
    &\leq  \alpha_2^{\kappa_n} + t \alpha_{\kappa_n}\\
    &\quad + \int_0^{t} \max(1, \beta) 
    \max_{0\leq \tau \leq s}\|{\b{u}}_{Red}(\tau) - \Kkf{n}^2{\vec{\b{u}}}_{Red}(\tau)\|_{L^\infty(\Omega)^{2d}}\dd s.
\end{align*}
Defining 
\begin{align}
    A^{\kappa_n}(t) &:=  \alpha_2^{\kappa_n} + t \alpha_{\kappa_n}\geq 0, \label{eqn:defAkapp}\\
    B&:= \max(1, \beta)\geq 0, \nonumber
\end{align}
we then obtain for all $t\in [0,T]$,
\begin{align*}
    E(t)\leq A^{\kappa_n}(t) + \int_0^{t} B \max_{0\leq \tau \leq s} E(\tau) \dd s,
\end{align*}
with $A^{\kappa_n}$ non-decreasing, such that we further have for $\Delta(t) := \max_{0\leq s \leq t} E(s)$,
\begin{equation*}
    \Delta(t) \leq A^{\kappa_n}(t) + \int_0^{t} B\Delta(s) \dd s.
\end{equation*}
Applying the Gr\"onwall inequality now yields, for all $t\in [0,T]$,
\begin{equation}
\label{eqn:groenonDel}
    0 \leq E(t) \leq \Delta(t) \leq A^{\kappa_n}(t) \exp\big(tB\big).
\end{equation}
Due to Assumptions \ref{ass:numconv}, Proposition \ref{prop:discrrhserr} and (\ref{eqn:defAkapp}), we then obtain
\begin{equation*}
    A^{\kappa_n} \Rightarrow 0
\end{equation*}
as $n \rightarrow \infty$, which using (\ref{eqn:groenonDel}) leads to, for all $t\in [0,T]$,
\begin{equation*}
    0 \leq E(t) \leq\Delta(t) \leq \Delta(T) \searrow 0
\end{equation*}
for $n\rightarrow \infty$. If (\ref{eqn:asssemidiscrconvinputdata}) holds, we immediately obtain $\alpha_2^{\kappa} \in O(\kappa)$ and therefore $\|A^\kappa\|_{C^{0}([0,T])} \in O(\kappa)$ using the rates in Proposition \ref{prop:discrrhserr}, implying (\ref{eqn:convrateressemidisr}) via (\ref{eqn:groenonDel}).
\end{proof}

\section{Convergence of the Fully Discrete System}
\label{sec:ConvFullyDiscr}
We introduce a parameter $N\in\mathbb{N}$ and $h:= T/N$ controlling the discretization with respect to time. The discrete system under consideration in this section is given by a central difference scheme for $n\geq 1$,
\begin{equation}
\label{eqn:discrprob}
    \begin{aligned}
        \vec{\b{u}}^{-1,N}_\kappa& = \vec{u}_{0}^\kappa- h\vec{v}_{0}^\kappa + \tfrac{1}{2}h^2(\vec{F}_\kappa(\IP_1(\vec{\b{u}}^{0,N}_\kappa))+ \vec{\b{b}}_\kappa(0)) \\
        \vec{\b{u}}^{0,N}_\kappa& = \vec{u}_{0}^\kappa \\
        \vec{{\b{u}}}^{n,N}_\kappa &= 2\vec{\b{u}}^{n-1,N}_\kappa - \vec{\b{u}}^{n-2,N}_\kappa +h^2(\vec{F}_\kappa(\IP_{n}(\vec{\b{u}}^{0,...,n-1,N}_\kappa))+ \vec{\b{b}}_\kappa(t_{n-1}))\\
    \end{aligned}
\end{equation}
where $\vec{\b{u}}_\kappa^{0,...n-1,N}$ is the family $(\vec{\b{u}}_\kappa^{n',N})_{0\leq n'\leq n-1}$, and $\IP_n: (\mathbb{R}^{d|I^\kappa|})^{n} \rightarrow \Linftyptw([-T,0], (\mathbb{R}^d)^{I^\kappa})$ given by the piecewise function
\begin{equation*}
    \IP_n((\vec{u}_k)_{0 \leq k \leq n-1}) = t\mapsto \vec{u}_0+\sum_{1\leq k \leq n-1}\chi_{({\color{black}-hk,-h(k-1)}]}(t)(\vec{u}_{n-k}-\vec{u}_0).
\end{equation*}
This function is defined such that $\IP_n((\vec{u}_k)_{0 \leq k \leq n-1})(0) = \vec{u}_{n-1}$.\par
Note that
\begin{equation*}
    \max_{-T \leq \tau\leq 0} \ca{S}^{ij}(\IP_n(\vec{\b{u}}^{0,...n-1,N}_\kappa)(\tau)) = \max_{0\leq k \leq n-1} \ca{S}^{ij}(\vec{\b{u}}^{k,N}_\kappa),
\end{equation*}
which reduces the maximum over $[0,t]$ in the discrete non-local operator $\vec{F}_\kappa$ to a finite maximum that can be easily evaluated in the numerical code. We define $t_n^N := hn$ for $0 \leq n \leq N$ and drop the $N$ from the notation if it is clearly implied by the context. \newline

We now aim to prove the main convergence result as given in Theorem \ref{theo:fullconv} below. We begin with the following definitions.

\begin{definition} We define the following.
\begin{enumerate}
    \item ${\hat{G}}_h: \Linftyptw([-T,0], (\mathbb{R}^d)^{I^\kappa} \times (\mathbb{R}^d)^{I^\kappa}) \times [0,T] \rightarrow (\mathbb{R}^d)^{I^\kappa} \times (\mathbb{R}^d)^{I^\kappa} $ with 
    \begin{align*}
        {\hat{G}}_h((\phi,\psi)^\top, t) = \begin{pmatrix}
        \psi(0)\\
        \vec{F}_\kappa(\tau_{-h}\phi) + \vec{\b{b}}_\kappa(t)
    \end{pmatrix}.
    \end{align*}
    where $\tau_{-h}$ shifts the map by $h$ to the right, extending it by the value at $-T$ on the left. That is, for $\b{a} \in \Linftyptw([-T, 0], X)$ and some Banach Space $X$, we define similarly to the existing shift operator, 
    \begin{equation*}
        [\tau_{-h}\b{a}](s) := \begin{cases}
            \b{a}(s-h) & s\geq -T+h\\
            \b{a}(-T)  & \text{else.}
        \end{cases}
    \end{equation*}
    \item Let $\vec{\b{u}}_{Red}^\kappa$ be the unique solution to (\ref{eqn:semdiscreqred}). We then define the {consistency residual} $K^{N,n}_\kappa$ for $1\leq n \leq N$ as
    \begin{align}\label{eqn:defstabres}
        K^{N,n}_\kappa := \tfrac{1}{h}( \vec{\b{u}}_{Red}^\kappa(t_{n-1})-\vec{\b{u}}_{Red}^\kappa(t_{n}) )+ {\hat{G}}_h((\vec{\b{u}}^\kappa_{Red})_{t_n}, t_{n-1}).
    \end{align}
    \item We then define
\begin{equation}
\label{eqn:redfulldiscrsol}
    \hat{\b{u}}_\kappa^{n,N} := \begin{pmatrix}
        \vec{\b{u}}_\kappa^{n,N} \\
        \frac{\vec{\b{u}}_\kappa^{n,N}-\vec{\b{u}}_\kappa^{n-1,N}}{h}
    \end{pmatrix} =: \begin{pmatrix}
        \vec{\b{u}}_\kappa^{n,N} \\
        \vec{\b{v}}_\kappa^{n,N}
    \end{pmatrix} \in (\mathbb{R}^d)^{I^\kappa}\times(\mathbb{R}^d)^{I^\kappa}
\end{equation}
to equivalently reformulate (\ref{eqn:discrprob}) to 
\begin{equation}
\label{eqn:redfulldiscr}
    0 =  \hat{\b{u}}_\kappa^{n-1,N} -\hat{\b{u}}_\kappa^{n,N}+  h\begin{pmatrix}
         (\hat{\b{u}}_\kappa^{n,N})_2 \\
         \vec{F}_\kappa(\IP_{n}((\hat{\b{u}}^{...n-1,N}_\kappa)_1))+ \vec{\b{b}}_\kappa(t_{n-1})
    \end{pmatrix}.
\end{equation}
\item We define the discretization error at $t_n$ as
\begin{equation}\label{eqn:defediscr}
    e_n := \vec{\b{u}}_{Red}^\kappa (t_n) - \hat{\b{u}}_\kappa^{n,N}.
\end{equation}
\end{enumerate}
\end{definition}

We then obtain our first lemma.
\begin{lemma}[Stability Inequality]\label{lemm:stabineq}
    We have for all $N \in \mathbb{N}$, $1\leq n \leq N$ and all $\delta/\sqrt{d}>\kappa >0$, that
    \begin{align*}
        (1-h)\|e_n\|_\infty& \leq h\|K^{N,n}_\kappa\|_\infty+ \|e_{n-1}\|_\infty \\
       &\quad + hL_{\vec{F}}\|\IP_{n}((\hat{\b{u}}^{...n-1,N}_\kappa)_1)) - ((\vec{\b{u}}_{Red}^\kappa)_1)_{t_{n-1}}\|_{\Linftyptw([-T,0], (\mathbb{R}^d)^{I^\kappa})},
    \end{align*}
    where $\vec{\b{u}}_{Red}^\kappa$ is the unique solution to (\ref{eqn:semdiscreqred}) and $\hat{\b{u}}^{n,N}_\kappa$ as given in (\ref{eqn:redfulldiscrsol}) with the $\b{u}^{n,N}_\kappa$ as given in (\ref{eqn:discrprob}).
\end{lemma}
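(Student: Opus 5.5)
The plan is to subtract the fully discrete relation (\ref{eqn:redfulldiscr}) from the definition (\ref{eqn:defstabres}) of the consistency residual, both multiplied by the appropriate sign, so that the difference is expressed purely in terms of the errors $e_n$, $e_{n-1}$ and a difference of the discrete operator $\vec{F}_\kappa$. Concretely, (\ref{eqn:defstabres}) gives
\begin{equation*}
    \vec{\b{u}}_{Red}^\kappa(t_n) = \vec{\b{u}}_{Red}^\kappa(t_{n-1}) + h\,\hat{G}_h((\vec{\b{u}}^\kappa_{Red})_{t_n}, t_{n-1}) - hK^{N,n}_\kappa,
\end{equation*}
while (\ref{eqn:redfulldiscr}) gives $\hat{\b{u}}_\kappa^{n,N} = \hat{\b{u}}_\kappa^{n-1,N} + h\big((\hat{\b{u}}_\kappa^{n,N})_2, \vec{F}_\kappa(\IP_{n}((\hat{\b{u}}^{...n-1,N}_\kappa)_1))+ \vec{\b{b}}_\kappa(t_{n-1})\big)^\top$. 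Subtracting, the body force terms $\vec{\b{b}}_\kappa(t_{n-1})$ cancel, and one obtains
\begin{equation*}
    e_n = e_{n-1} - hK^{N,n}_\kappa + h\begin{pmatrix} (e_n)_2 \\ \vec{F}_\kappa(\tau_{-h}((\vec{\b{u}}^\kappa_{Red})_1)_{t_n}) - \vec{F}_\kappa(\IP_{n}((\hat{\b{u}}^{...n-1,N}_\kappa)_1)) \end{pmatrix},
\end{equation*}
where I use that the first component of $\hat{G}_h$ evaluated at $(\vec{\b{u}}^\kappa_{Red})_{t_n}$ is $(\vec{\b{u}}^\kappa_{Red})_2(t_n)$, so that the first component difference is exactly $(e_n)_2$.

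Next I take $\|\cdot\|_\infty$ norms. The first block contributes $h\|(e_n)_2\|_\infty \le h\|e_n\|_\infty$, which is what produces the implicit factor $(1-h)$ on the left after moving it over; since the $\infty$-norm of the stacked vector is a maximum of the components, I bound the whole extra term by $h\max(\|(e_n)_2\|_\infty, \|\text{second block}\|_\infty)\le h\|e_n\|_\infty + h\|\text{second block}\|_\infty$. For the second block I invoke Proposition \ref{prop:LScontdiscrrhs}, giving the bound $L_{\vec{F}}\|\tau_{-h}((\vec{\b{u}}^\kappa_{Red})_1)_{t_n} - \IP_{n}((\hat{\b{u}}^{...n-1,N}_\kappa)_1)\|_{\Linftyptw}$ with $L_{\vec{F}}$ independent of $\kappa<\delta/\sqrt{d}$.

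The remaining step, and the only one requiring care, is to identify $\tau_{-h}((\vec{\b{u}}^\kappa_{Red})_1)_{t_n}$ with $((\vec{\b{u}}^\kappa_{Red})_1)_{t_{n-1}}$ as elements of $\Linftyptw([-T,0],(\mathbb{R}^d)^{I^\kappa})$. By the definition of the history map $\b{a}_t$, for $s\ge -T+h$ we have $[\tau_{-h}\b{a}_{t_n}](s) = \b{a}_{t_n}(s-h)$, which equals $\b{a}(s-h+t_n)=\b{a}(s+t_{n-1})$ when $s-h>-t_n$, i.e.\ $s>-t_{n-1}$, and $\b{a}(0)$ otherwise, matching $\b{a}_{t_{n-1}}(s)$. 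For $s<-T+h$, the shift returns $\b{a}_{t_n}(-T)=\b{a}(0)$ (as $-T\le -t_n$), and also $s< -T+h\le -t_{n-1}$ so $\b{a}_{t_{n-1}}(s)=\b{a}(0)$. Hence the two histories coincide pointwise, the second block is bounded exactly by the term in the statement, and rearranging yields the claimed inequality $(1-h)\|e_n\|_\infty \le h\|K^{N,n}_\kappa\|_\infty + \|e_{n-1}\|_\infty + hL_{\vec{F}}\|\cdots\|$. I expect this history-shift bookkeeping (including the boundary case $n=1$, where $e_0$ involves the velocity convention of (\ref{eqn:redfulldiscrsol}) with $\vec{\b{u}}^{-1,N}_\kappa$) to be the main point needing attention; the rest is the triangle inequality.
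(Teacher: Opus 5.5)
Your proposal is correct and follows the paper's proof: subtract (\ref{eqn:redfulldiscr}) from (\ref{eqn:defstabres}), take $\infty$-norms, absorb $h\|(e_n)_2\|_\infty$ into the left-hand side, and apply Proposition \ref{prop:LScontdiscrrhs}. Your explicit check that $\tau_{-h}$ applied to the history at $t_n$ equals the history at $t_{n-1}$ fills in a step the paper uses without comment. The case $n=1$ needs no special care, since (\ref{eqn:redfulldiscr}) already holds there with the velocity defined through $\vec{\b{u}}^{-1,N}_\kappa$.
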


\begin{proof}
    Subtracting (\ref{eqn:redfulldiscr}) from (\ref{eqn:defstabres}) yields
    \begin{equation*}
            h K^{N,n}_\kappa = e_{n-1} -e_{n} - h\begin{pmatrix}
        (\hat{\b{u}}_\kappa^{n,N})_2 - (\vec{\b{u}}_{Red}^\kappa)_2(t_n) \\
        \vec{F}_\kappa(\IP_{n}((\hat{\b{u}}^{...n-1,N}_\kappa)_1))) - \vec{F}_\kappa(((\vec{\b{u}}_{Red}^\kappa)_1)_{t_{n-1}})
    \end{pmatrix}.
    \end{equation*}
    Rearranging the terms and applying the norm and then Proposition \ref{prop:LScontdiscrrhs} then yields the claim.
\end{proof}

We also need a priori bounds for the semi-discrete solution uniform with respect to $\kappa$.

\begin{lemma}[Uniform A-Priori Bounds for the Semi-Discrete Solutions]\label{lemm:aprioriboundssemidiscrete}
    Let Assumptions \ref{ass:numconv} hold and $\kappa_n \searrow 0$ with $\kappa_n < \frac{\delta}{\sqrt{d}}$ for all $n\in\mathbb{N}$. Define
    \begin{align*}
        R:= \sup_{n\in \mathbb{N}} (\|\vec{u}_0^{\kappa_n}\|_\infty + \|\vec{v}_0^{\kappa_n}\|_\infty + \|\vec{\b{b}}^{\kappa_n}\|_{C^0([0,T], (\mathbb{R}^d)^{I^\kappa})}) < \infty
    \end{align*}
    Then, for all $n\in \mathbb{N}$, $t, t' \in [0,T]$, we have
    \begin{align*}
        \|\vec{\b{u}}^{\kappa_n}_{\text{Red}}(t)\|_\infty\leq M_0:=R(1+T)\,e^{L_{\vec{G}}T},\quad \|\vec{\b{u}}^{\kappa_n}_{\text{Red}}(t)-\vec{\b{u}}^{\kappa_n}_{\text{Red}}(t')\|_\infty\leq M_1 |t-t'|,\quad M_1:=L_{\vec{G}}M_0 + R .
    \end{align*}
\end{lemma}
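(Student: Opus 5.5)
The plan has three parts: show $R<\infty$, bound $\|\vec{\b{u}}^{\kappa_n}_{Red}\|_\infty$ uniformly by a Gr\"onwall argument on the integral form (\ref{eqn:semdiscrinteqred}), and then read off the Lipschitz bound in time from the linear growth of $\vec{G}_\kappa$.

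\textbf{Finiteness of $R$.} By Assumptions \ref{ass:numconv}, $\Kkf{n}\vec{u}_0^{\kappa_n}\to u_0$ in $L^\infty(\Omega)^d$. Since every $\Omega_i$ has positive volume, $\|\Kkf{n}\vec{u}\|_{L^\infty(\Omega)^d}=\|\vec{u}\|_\infty$. Hence $\|\vec{u}_0^{\kappa_n}\|_\infty$ is a convergent, and therefore bounded, real sequence. The same argument applies to $\vec{v}_0^{\kappa_n}$ and to $\vec{\b{b}}_{\kappa_n}$ in $C^0([0,T],\cdot)$, so $R<\infty$.

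\textbf{Uniform bound $M_0$.} I will use the integral form (\ref{eqn:semdiscrinteqred}) together with the growth bound
\[
\|\vec{G}_\kappa(\vec{\Phi},t)\|_\infty\le L_{\vec{G}}\|\vec{\Phi}\|_{C^0}+\|\vec{\b{b}}_\kappa(t)\|_\infty
\]
derived before (\ref{eqn:semdiscreqred}). The constant $L_{\vec{G}}=\max(1,L_{\vec{F}})$ is independent of $\kappa$ by Proposition \ref{prop:LScontdiscrrhs}, which is why the restriction $\kappa_n<\delta/\sqrt{d}$ is needed. Set $\Delta(t):=\max_{0\le s\le t}\|\vec{\b{u}}^{\kappa_n}_{Red}(s)\|_\infty$. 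Because the history $(\vec{\b{u}}_{Red})_s$ only takes values from $[0,s]$, we have $\|(\vec{\b{u}}_{Red})_s\|_{C^0}\le\Delta(s)$. This gives
\[
\|\vec{\b{u}}_{Red}(t)\|_\infty\le \|\vec{u}^{\kappa_n}_{0,Red}\|_\infty+tR+\int_0^t L_{\vec{G}}\Delta(s)\dd s .
\]
The right-hand side is nondecreasing in $t$, so the same inequality holds for $\Delta(t)$. Since $\|\vec{u}^{\kappa_n}_{0,Red}\|_\infty\le R$ (the product space carries the max norm), Gr\"onwall's inequality with nondecreasing inhomogeneity $R(1+t)\le R(1+T)$ yields $\Delta(t)\le R(1+T)e^{L_{\vec{G}}T}=M_0$.

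\textbf{Lipschitz bound $M_1$.} For $t'\le t$, (\ref{eqn:semdiscrinteqred}) gives
\[
\vec{\b{u}}_{Red}(t)-\vec{\b{u}}_{Red}(t')=\int_{t'}^{t}\vec{G}_\kappa((\vec{\b{u}}_{Red})_s,s)\dd s .
\]
The integrand is bounded in norm by $L_{\vec{G}}M_0+R=M_1$, which gives the claim immediately.

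The only delicate point is the Gr\"onwall step: one has to pass from the pointwise estimate to one for the running maximum $\Delta$, exactly as in the proof of Theorem \ref{theo:convsemidiscr}. One also has to check carefully that every constant, in particular $L_{\vec{F}}$ through $\boundkerdiscrinteg$ from Lemma \ref{lemm:unifbndinteg}, is independent of $\kappa_n$.
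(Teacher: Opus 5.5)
Your proposal is correct and follows the same route as the paper. You get $R<\infty$ from the isometry $\|\Kk\vec u\|_{L^\infty(\Omega)^d}=\|\vec u\|_\infty$ together with the assumed convergence of the input data, $M_0$ from Gr\"onwall with the $\kappa$-independent growth bound $\|\vec G_\kappa(\vec\Phi,t)\|_\infty\le L_{\vec G}\|\vec\Phi\|+R$, and $M_1$ from bounding $\|\dot{\vec{\b u}}^{\kappa_n}_{Red}\|_\infty=\|\vec G_\kappa((\vec{\b u}^{\kappa_n}_{Red})_t,t)\|_\infty$ by $L_{\vec G}M_0+R$. The only difference is that you write out the running-maximum step and the estimate $\|\vec u^{\kappa_n}_{0,Red}\|_\infty+t\|\vec{\b b}_{\kappa_n}\|_{C^0}\le R(1+T)$, which the paper's two-line proof leaves implicit.
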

\begin{proof}
    Since $\Kk$ is an isometry due to $V_i >0$ for all $i\in I^\kappa$, we get that $R<\infty$ since the input data converges as required by Assumptions \ref{ass:numconv}.\newline
    The bound on $\vec{\b{u}}^{\kappa_n}_{\text{Red}}(t)$ is obtained using Gr\"onwall and $\|\vec{G}(\phi,t)\| \leq L_{\vec{G}} \|\phi\| + R$, which also yields the second bound as $\|\dot{\vec{\b{u}}}^{\kappa_n}_{\text{Red}}(t)\|=\|\vec{G}((\vec{\b{u}}^{\kappa_n}_{\text{Red}})_t,t)\|$. 
\end{proof}

We are now ready for the final theorem.

\begin{theorem}[Convergence of the Fully Discrete System]\label{theo:fullconv}
    Let Assumptions \ref{ass:numconv} hold. Further, let $(N_k)_{k\in \mathbb{N}}\subset \mathbb{N}$ be such that $N_k \xrightarrow[]{k\rightarrow \infty} \infty$ and $0<\kappa_k \searrow 0$ as $k\nearrow \infty$. Denote the unique solution to (\ref{eqn:conteq}) in reduced form as $\b{u}_{Red} \in C^1([0,T], L^\infty(\Omega)^d\times L^\infty(\Omega)^d)$, and $(\hat{\b{u}}_{\kappa_k}^{n, N_k})_{0\leq n \leq N_k}$ as given by (\ref{eqn:discrprob}) represented as in (\ref{eqn:redfulldiscrsol}). Then, we have
    \begin{equation*}
        \lim_{k\rightarrow \infty} \max_{0\leq n \leq N_k} \| \b{u}_{Red}(t_n^{N_k}) - \Kkf{k}^2\hat{\b{u}}_{\kappa_k}^{n, N_k}\|_{L^\infty(\Omega)^d\times L^\infty(\Omega)^d} = 0.
    \end{equation*}
    Further, if $\b{b}$ is Lipschitz continuous in $t$, as well as 
    \begin{equation}\label{eqn:assreginputdataconv}
    \begin{aligned}
        &\|\Kkf{k}\vec{\b{b}}_{\kappa_k} - \b{b}\|_{C^0([0,T], L^\infty(\Omega)^d)} \in O(\kappa_k),\ \|u_0 - \Kkf{k}\vec{u}_0\|_{L^\infty(\Omega)^d} \in O(\kappa_k),\\
        &\hspace{3.1cm}\|v_0 - \Kkf{k}\vec{v}_0\|_{L^\infty(\Omega)^d} \in O(\kappa_k)
    \end{aligned}
    \end{equation}
    then we obtain
    \begin{equation*}
       \max_{0\leq n \leq N_k} \| \b{u}_{Red}(t_n^{N_k}) - \Kkf{k}^2\hat{\b{u}}_{\kappa_k}^{n, N_k}\|_{L^\infty(\Omega)^d\times L^\infty(\Omega)^d} \in O(1/N_k+ \kappa_k).
    \end{equation*}\end{theorem}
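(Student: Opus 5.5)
We split the error with the triangle inequality into a spatial part and a temporal part:
\begin{equation*}
\| \b{u}_{Red}(t_n) - \Kkf{k}^2\hat{\b{u}}^{n,N_k}_{\kappa_k}\| \leq \| \b{u}_{Red}(t_n) - \Kkf{k}^2\vec{\b{u}}^{\kappa_k}_{Red}(t_n)\| + \|\Kkf{k}^2 e_n\|.
\end{equation*}
The first term converges to $0$ uniformly in $t$ by Theorem \ref{theo:convsemidiscr}, and is $O(\kappa_k)$ under (\ref{eqn:assreginputdataconv}). Since $\Kk$ is an isometry, the remaining task is to bound $\max_n\|e_n\|_\infty$ by a quantity that vanishes as $h = T/N_k\to 0$, uniformly in $\kappa_k$. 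This uniformity is essential because $N_k$ and $\kappa_k$ are coupled arbitrarily.

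\textbf{Step 1: the history term in Lemma \ref{lemm:stabineq}.} Both functions are piecewise objects on $[-T,0]$. On the interval $(-hk,-h(k-1)]$, the function $\IP_n$ takes the value $\vec{\b{u}}^{n-k}_\kappa$, while $((\vec{\b{u}}^\kappa_{Red})_1)_{t_{n-1}}(s) = \vec{\b{u}}^\kappa(t_{n-1}+s)$ with $|t_{n-1}+s - t_{n-k}|\leq h$. By Lemma \ref{lemm:aprioriboundssemidiscrete} the difference is at most $\|e_{n-k}\|_\infty + M_1 h$. The constant extension for $s\leq -t_{n-1}$ gives $\|e_0\|$, which vanishes. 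Hence the sup-norm is bounded by $\max_{m\leq n-1}\|e_m\|_\infty + M_1 h$.

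\textbf{Step 2: the consistency residual.} In $K^{N,n}_\kappa$, the first component is $-\frac1h\int_{t_{n-1}}^{t_n}\dot{\vec{\b{u}}}(s)\,ds+\vec{\b{v}}(t_n)$. The second is $-\frac1h\int(\vec{F}_\kappa((\vec{\b{u}})_s)+\vec{\b{b}}_\kappa(s))\,ds + \vec{F}_\kappa(\tau_{-h}(\vec{\b{u}})_{t_n})+\vec{\b{b}}_\kappa(t_{n-1})$. Since $\tau_{-h}(\vec{\b{u}})_{t_n}=(\vec{\b{u}})_{t_{n-1}}$ up to the constant extension, Proposition \ref{prop:LScontdiscrrhs} together with the Lipschitz bound $M_1$ gives
\begin{equation*}
\|K^{N,n}_\kappa\|_\infty\leq (1+L_{\vec{F}})M_1 h + \omega_{\vec{\b{b}}_\kappa}(h),
\end{equation*}
where $\omega_{\vec{\b{b}}_\kappa}$ denotes the modulus of continuity of $\vec{\b{b}}_\kappa$. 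To make this uniform in $\kappa$, note that $\omega_{\vec{\b{b}}_\kappa}(h)\leq 2\|\Kk\vec{\b{b}}_\kappa-\b{b}\|_{C^0}+\omega_{\b{b}}(h)$, which tends to $0$; here $\b{b}$ is uniformly continuous on the compact interval $[0,T]$. In the Lipschitz case this term is $O(h+\kappa)$.

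\textbf{Step 3: the starting step and the discrete Gronwall argument.} Lemma \ref{lemm:stabineq} covers $n\geq1$, but we must check that the special start $\vec{\b{u}}^{-1}$ is consistent. The quantity $\hat{\b{u}}^{0}$ has velocity component $(\vec{u}_0-\vec{\b{u}}^{-1})/h = \vec{v}_0 - \tfrac h2(\ldots)$, so $\|e_0\|_\infty = O(h)$ uniformly in $\kappa$, using the bound $R$. Combining Steps 1 and 2 with Lemma \ref{lemm:stabineq}, and setting $\Delta_n=\max_{m\leq n}\|e_m\|_\infty$, we obtain for $h<1/2$:
\begin{equation*}
(1-h)\Delta_n\leq \Delta_{n-1}(1+hL_{\vec{F}}) + h\,\varepsilon(h,\kappa), \qquad \varepsilon(h,\kappa)=(1+L_{\vec{F}})M_1h+L_{\vec{F}}M_1h+\omega_{\vec{\b{b}}_\kappa}(h).
\end{equation*}
Iterating this gives
\begin{equation*}
\Delta_N\leq e^{CT}\bigl(\|e_0\|_\infty + T\varepsilon(h,\kappa)\bigr),
\end{equation*}
which tends to $0$, and is $O(h+\kappa)=O(1/N_k+\kappa_k)$ in the Lipschitz case.

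\textbf{Main obstacle.} The main difficulty is ensuring uniformity in $\kappa$ of every constant. This concerns $M_0$ and $M_1$, which come from Lemma \ref{lemm:aprioriboundssemidiscrete}; $L_{\vec{F}}$, which is $\kappa$-independent by Proposition \ref{prop:LScontdiscrrhs}; and above all the time modulus of $\vec{\b{b}}_\kappa$, which is not assumed directly. That modulus must be obtained from the uniform convergence $\Kk\vec{\b{b}}_\kappa\to\b{b}$, as in Step 2. A further point requiring care is the index bookkeeping in Step 1 near $s=-t_{n-1}$, which must be handled through the constant extension.
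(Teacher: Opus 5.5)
Your proposal is correct and follows essentially the same route as the paper. Both arguments split the error by the triangle inequality into the semi-discrete error, handled by Theorem \ref{theo:convsemidiscr}, and the time-stepping error $e_n$. Both then combine Lemma \ref{lemm:stabineq} with the history bound $\max_{m\le n-1}\|e_m\|_\infty+M_1h$, a consistency residual controlled by $M_1$ and a $\kappa$-uniform time modulus of $\vec{\b{b}}_\kappa$ inherited from $\b{b}$, the estimate $\|e_0\|_\infty=O(h)$, and a discrete Gr\"onwall argument. The only difference is cosmetic: you iterate the recursion for $\Delta_n=\max_{m\le n}\|e_m\|_\infty$ directly, whereas the paper introduces a majorant sequence $a_n^k$ and solves the resulting linear recurrence in closed form.
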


\begin{proof}
    Define $h:= T/N_k$. We assume without loss of generality that $h<1$. Using the convergence of the input data as assumed in Assumptions \ref{ass:numconv}, we obtain $\kappa',\delta'>0$ for every $\epsilon>0$ such that
\begin{align}
    &\|\vec{\b{b}}_\kappa(t)-\vec{\b{b}}_\kappa(t')\|_\infty \leq \|\Kk\vec{\b{b}}_\kappa(t)-\b{b}(t)\| +\|\b{b}(t)-\b{b}(t')\| +\|\Kk\vec{\b{b}}_\kappa(t')-\b{b}(t')\| \leq \epsilon,\nonumber\\
    &\|\b{u}_{Red}(0) - (\Kk\vec{{u}}_{0}^{\kappa}, \Kk\vec{{v}}_{0}^{\kappa})\|\leq \epsilon, \ \ \  \|\Kk\vec{\b{b}}_\kappa(0)- \b{b}(0)\| \leq \epsilon\label{eqn:inputappr}
\end{align}
for all $0<\kappa < \kappa'$ and $|t'-t| \leq \delta'$. \newline
If (\ref{eqn:assreginputdataconv}) holds, we obtain stronger convergence result,
\begin{equation} \label{eqn:sharperboundsinputdata}
    \begin{aligned}
        &\|\vec{\b{b}}_\kappa(t)-\vec{\b{b}}_\kappa(t')\|_\infty \leq \|\Kk\vec{\b{b}}_\kappa(t)-\b{b}(t)\| +\|\b{b}(t)-\b{b}(t')\| +\|\Kk\vec{\b{b}}_\kappa(t')-\b{b}(t')\|\in O(h+\kappa),\\
        &\|\b{u}_{Red}(0) - (\Kk\vec{{u}}_{0}^{\kappa}, \Kk\vec{{v}}_{0}^{\kappa})\|\in O(\kappa), \ \ \  \|\Kk\vec{\b{b}}_\kappa(0)- \b{b}(0)\| \in O(\kappa)
    \end{aligned}
\end{equation}
The Lipschitz continuity of $\vec{\b{u}}_{\kappa, Red}$ implies that there is a $M^N_\kappa\geq 0$ for all $0<\kappa$ and $0<N$ such that 
\begin{equation}\label{eqn:IPred}
\begin{aligned}
    &\|\IP_{n}((\hat{\b{u}}^{0,...n-1,N}_\kappa)_1) - ((\vec{\b{u}}_{Red}^\kappa)_1)_{t_{n-1}}\|_{\Linftyptw([-T,0], (\mathbb{R}^d)^{I^\kappa})} \\
    &\leq \max_{1\leq k \leq n}\|\IP_{n}((\hat{\b{u}}^{0,...n-1,N}_\kappa)_1) - ((\vec{\b{u}}_{Red}^\kappa)_1)_{t_{n-1}}\|_{\Linftyptw([-hk,-h(k-1)], (\mathbb{R}^d)^{I^\kappa})} \\
    &\leq \max_{0\leq k\leq n-1} \|e_k\|_\infty + \sup_{t\in [0,T]} \sup_{|t-t'|<h}\|\vec{\b{u}}_{Red}^\kappa(t) - \vec{\b{u}}_{Red}^\kappa(t')\| \\
    &\leq M^N_\kappa + \max_{0\leq k \leq n-1} \|e_{k}\|_\infty.
\end{aligned}
\end{equation}
and, with Lemma \ref{lemm:aprioriboundssemidiscrete},
\begin{equation}\label{eqn:Tconv}
    O(h) \ni M^{N_k}_{\kappa_k} \xrightarrow[]{k\rightarrow \infty} 0.
\end{equation}
We further define
\begin{equation*}
    K^{N}_\kappa := \max_{1\leq k \leq N} \|K^{N,k}_\kappa\|_\infty.
\end{equation*}
We now aim to show that $K^N_\kappa$ converges to zero.
To this end, we have for all $t \in [t_{n-1}, t_n]$, $h<\delta'$
\begin{align*}
    &\left\|\begin{pmatrix}
          \dot{\vec{\b{u}}}_{}^\kappa(t)\\
        \vec{F}_\kappa((\vec{\b{u}}^{\kappa})_{t}) + \vec{\b{b}}_\kappa(t) \end{pmatrix} -  {\hat{G}}_h((\vec{\b{u}}^\kappa_{Red})_{t_n}, t_{n-1})\right\|\\
     &=\left\|\begin{pmatrix}
          \dot{\vec{\b{u}}}_{}^\kappa(t)\\
        \vec{F}_\kappa((\vec{\b{u}}^{\kappa})_{t}) + \vec{\b{b}}_\kappa(t)
    \end{pmatrix} - \begin{pmatrix}
        \dot{\vec{\b{u}}}_{}^\kappa(t_{n})\\
        \vec{F}_\kappa((\vec{\b{u}}^{\kappa})_{t_{n-1}}) + \vec{\b{b}}_\kappa(t_{n-1}) 
    \end{pmatrix} \right\|_{\infty}\\
    &\leq \|\dot{\b{u}}_{}^\kappa(t)-\dot{\b{u}}_{}^\kappa(t_{n})\|_{\infty} + \|\vec{F}((\b{u}_{\kappa})_{t})- \vec{F}((\b{u}_{\kappa})_{t_{n-1}}) \|_{\infty} + \|\vec{\b{b}}_\kappa(t)-\vec{\b{b}}_\kappa(t_{n-1})\|_{\infty}\\
    &\leq M_1h+ {L_{\vec{F}}}M_1h + \epsilon
\end{align*}
which implies that
\begin{align*}
    \|K^{N,n}_\kappa\|_\infty &= \left \|\frac{1}{h} \int_{t_{n-1}}^{t_n}\begin{pmatrix}
          \dot{\vec{\b{u}}}^{\kappa}(t)\\
        \vec{F}((\vec{\b{u}}^{\kappa})_{t}) + \vec{\b{b}}_\kappa(t)
    \end{pmatrix} - \begin{pmatrix}
        \dot{\vec{\b{u}}}^{\kappa}(t_{n})\\
        \vec{F}((\vec{\b{u}}^{\kappa})_{t_{n-1}}) + \vec{\b{b}}_\kappa(t_{n-1}) 
    \end{pmatrix}  \dd t \right\|_\infty \\
    &\leq M_1h+ {L_{\vec{F}}}M_1h + \epsilon
\end{align*}
This yields
\begin{equation}\label{eqn:KNconv}
    K^{N_k}_{\kappa_k} \xrightarrow[]{k\rightarrow \infty} 0.
\end{equation}
Note that if (\ref{eqn:assreginputdataconv}) holds, we obtain using (\ref{eqn:sharperboundsinputdata}) that $K^{N_k}_{\kappa_k} \in O(h + \kappa)$. 
We denote the discretization error in $t^{N_k}_n$ as defined in (\ref{eqn:defediscr}) for $N_k$ and $\kappa_k$ as $e^k_n :=  \vec{\b{u}}_{Red}^{\kappa_k}(t_n^{N_k}) - \hat{\b{u}}_{\kappa_k}^{n,N_k}$.
We also note that, using (\ref{eqn:inputappr}) and Proposition \ref{prop:LScontdiscrrhs},
\begin{align*}
    \|e_0^k\|_\infty &\leq \|\vec{u}_{0}^\kappa -\vec{\b{u}}^\kappa(0)\|_\infty + \Big\|\tfrac{\vec{\b{u}}_\kappa^{0,N}-\vec{\b{u}}_\kappa^{-1,N}}{h} -\dot{\vec{\b{u}}}^\kappa(0) \Big\|_\infty \\
    &\leq \|\tfrac{1}{2}h(\vec{F}(\IP_1(\vec{u}_{0}^\kappa))+ \vec{\b{b}}_\kappa(0))\|_\infty \leq \tfrac{1}{2}h (L_{\vec{F}}M_0 + R)
\end{align*} 
This yields
\begin{equation}\label{eqn:e0conv}
    O(h) \ni \|e_0^k\|_\infty \xrightarrow[]{k\rightarrow \infty} 0.
\end{equation}
We now define the final error bound sequence $a_n^k$ such that $a_0^k = \|e_0^k\|_\infty$ and for $n\geq 1$, 
\begin{equation}\label{eqn:defan}
    a_n^k := \frac{1}{1-h}\left(hK^{N_k}_{\kappa_k} + a_{n-1} + hL_{\vec{F}}\left(M^{N_k}_{\kappa_k} + \max_{0\leq s \leq n-1} a_{s}\right)\right).
\end{equation}
Using Lemma \ref{lemm:stabineq} and (\ref{eqn:IPred}), we see that
\begin{equation*}
\|e_n\|_{\infty} \leq a_n^k
\end{equation*}
for all $k$ and $0 \leq n\leq N$.

Since all factors on the right hand side of (\ref{eqn:defan}) are non-negative (we assume $0<h<1$) and $a_0^k\geq 0$, we conclude that $a_n^k \geq a_{n-1}^k$ for $n\geq 1$, therefore the above equation is equivalent to
\begin{equation}\label{eqn:finrecdiscr}
    a_n^k = \frac{1}{1-h}\left(hK^{N_k}_{\kappa_k} +hL_{\vec{F}} M^{N_k}_{\kappa_k} + (1+hL_{\vec{F}})a_{n-1}^k\right).
\end{equation}
For $a^* := - \frac{K^{N_k}_{\kappa_k} +L_{\vec{F}}M^{N_k}_{\kappa_k}}{(1+L_{\vec{F}})}$, we have that
\begin{equation*}
    a^* = \frac{1}{1-h}\left(hK^{N_k}_{\kappa_k} +hL_{\vec{F}} M^{N_k}_{\kappa_k} + (1+hL_{\vec{F}})a^*\right),
\end{equation*}
as well as that $a^{hom}_n := C_k \left(\frac{1+hL_{\vec{F}}}{1-h}\right)^n$ for $n\geq 0$, fulfills
\begin{equation*}
    a_n^{hom} = \frac{1}{1-h}\left((1+hL_{\vec{F}})a_{n-1}^{hom}\right)
\end{equation*}
for all $C_k\in \mathbb{R}$ and $n\geq 1$. Therefore $a_n^{hom} + a^*$ parameterizes all solutions to (\ref{eqn:finrecdiscr}) over $C_k\in \mathbb{R}$.   Given $a_0^k = \|e_0^k\|_\infty$, we then must have 
\begin{equation*}
    C_k = a_0^k - a^*.
\end{equation*}
An explicit formula for the bounds $a_n^k$ is therefore given for all $0 \leq n\leq N^k$ by 
\begin{equation*}
    a_n^k = \left( \|e_0^k\|_\infty+  \frac{K^{N_k}_{\kappa_k} +L_{\vec{F}}M^{N_k}_{\kappa_k}}{(1+L_{\vec{F}})}\right) \left(\frac{1+hL_{\vec{F}}}{1-h}\right)^n - \frac{K^{N_k}_{\kappa_k} +L_{\vec{F}}M^{N_k}_{\kappa_k}}{(1+L_{\vec{F}})}.
\end{equation*}
We then finally have
\begin{equation}\label{eqn:expbound}
    \left(\frac{1+hL_{\vec{F}}}{1-h}\right)^n \leq \left(\frac{1+\tfrac{T}{N_k}L_{\vec{F}}}{1-\tfrac{T}{N_k}}\right)^{N_k} = \left(1+\tfrac{T}{N_k}L_{\vec{F}}\right)^{N_k}\left(1-\tfrac{T}{N_k}\right)^{-N_k}\rightarrow \exp(T(1+L_{\vec{F}}))< \infty
\end{equation}

This finally yields, for all $k\in\mathbb{N}$,
\begin{equation}\label{eqn:convindiscr}
    \begin{aligned}
        &\max_{0\leq n \leq N_k}\|\vec{\b{u}}_{Red}^{\kappa_k}(t_n^{N_k}) - \hat{\b{u}}_{\kappa_k}^{n,N_k}\|_\infty \\
        &\leq \left( \|e_0^k\|_\infty+  \frac{K^{N_k}_{\kappa_k} +L_{\vec{F}}M^{N_k}_{\kappa_k}}{(1+L_{\vec{F}})}\right)\left(\frac{1+\tfrac{T}{N^k}L_{\vec{F}}}{1-\tfrac{T}{N_k}}\right)^{N^k} - \frac{K^{N_k}_{\kappa_k} +L_{\vec{F}}M^{N_k}_{\kappa_k}}{(1+L_{\vec{F}})} \\
        &\quad \xrightarrow[k\nearrow \infty]{(\ref{eqn:expbound}), (\ref{eqn:KNconv}), (\ref{eqn:Tconv}), (\ref{eqn:e0conv})} 0
    \end{aligned}
\end{equation}
Again, if (\ref{eqn:assreginputdataconv}) holds, we further obtain
\begin{align*}
     &\left( \|e_0^k\|_\infty+  \frac{K^{N_k}_{\kappa_k} +L_{\vec{F}}M^{N_k}_{\kappa_k}}{(1+L_{\vec{F}})}\right)\left(\frac{1+\tfrac{T}{N^k}L_{\vec{F}}}{1-\tfrac{T}{N_k}}\right)^{N^k} - \frac{K^{N_k}_{\kappa_k} +L_{\vec{F}}M^{N_k}_{\kappa_k}}{(1+L_{\vec{F}})} \in O(\kappa +h).
\end{align*}
Finally, we have for all $0\leq n \leq N^k$ and all $\kappa_k$, that 
\begin{align*}
    &\max_{0\leq n \leq N_k}\|\b{u}_{Red}(t_n^{N_k}) - \Kk^2\hat{\b{u}}^{n,N_k}_{\kappa_k}\|_{L^\infty(\Omega)^d}\\
    &\leq \|\b{u}_{Red} -  \Kk^2\vec{\b{u}}^{\kappa_k}_{Red}\|_{C^0([0,T], L^\infty(\Omega)^d)} +\max_{0\leq n \leq N_k} \|\vec{\b{u}}_{Red}^{\kappa_k}(t_n^{N_k}) - \hat{\b{u}}_{\kappa_k}^{n,N_k}\|_\infty, 
\end{align*}
which yields the claim via Theorem \ref{theo:convsemidiscr} and (\ref{eqn:convindiscr}). If (\ref{eqn:assreginputdataconv}) holds, Theorem \ref{theo:convsemidiscr} gives us a rate for the first term, resulting in
\begin{align*}
        &\|\b{u}_{Red} -  \Kk^2\vec{\b{u}}^{\kappa_k}_{Red}\|_{C^0([0,T], L^\infty(\Omega)^d)} +\max_{0\leq n \leq N_k} \|\vec{\b{u}}_{Red}^{\kappa_k}(t_n^{N_k}) - \hat{\b{u}}_{\kappa_k}^{n,N_k}\|_\infty \in O(\kappa) + O(\kappa + h)
\end{align*}
\end{proof}

One quantity often of interest in peridynamic simulations is the integrity function given by
\begin{align}\label{eqn:damagefunc}
    [0,1]\ni \gamma_{\b{u}}(x, t) = \frac{1}{\vol(\Omega \cap B_{\delta}(x))}\int_{\Omega \cap B_{\delta}(x)} 
    \mu(\esssup_{0\leq s\leq t} \mathcal{S}(\b{u}(s), x',x)) \dd x' ,
\end{align}
or equivalently, the damage function $d_{\b{u}}(x,t) = 1 - \gamma_{\b{u}}(x,t)$. 

For fully discrete solutions $\vec{\b{u}}^{0,...,N,N}_\kappa$, one can define a natural equivalent given by $\gamma_{\vec{\b{u}}}^{n, N} \in \mathbb{R}^{I^\kappa}$ given by
\begin{align}\label{eqn:damagefuncdiscr}
    [0,1]\ni(\vec{\gamma}_{\vec{\b{u}}}^{n, N})_i = \Big(\sum_{i\not=j} w_{ij}V_j\Big)^{-1}\sum_{i\not=j} w_{ij} V_j 
    \mu(\max_{0\leq k\leq n} \mathcal{S}^{ij}(\vec{\b{u}}^{k,N}_{\kappa})),
\end{align}
with the partial area/volume weights $w_{ij}$ as in Remark \ref{rm:partvolumeweights} or simply $w_{ij}= \chi_{[0,\delta]}(\|x_i-x_j\|_2)$.\par

From Theorem \ref{theo:fullconv} above, we obtain the following Corollary.
\begin{corollary}[Convergence of the Integrity Function]
    Let the Assumptions \ref{ass:numconv}, \ref{ass:quadweights} hold. We also require $\Omega$ to satisfy an interior cone condition. Further, let $(N_k)_{k\in \mathbb{N}}\subset \mathbb{N}$ be such that $N_k \xrightarrow[]{k\rightarrow \infty} \infty$ and $0<\kappa_k \searrow 0$ as $k\nearrow \infty$. Denote the unique solution to (\ref{eqn:conteq}) as $\b{u}_{} \in C^2([0,T], L^\infty(\Omega)^d)$, and $(\vec{\b{u}}_{\kappa_k}^{n, N_k})_{0\leq n \leq N_k}$ as given by (\ref{eqn:discrprob}). Then, we obtain
    \begin{equation*}
        \lim_{k\rightarrow \infty} \sup_{0\leq l \leq N_k} \|\gamma_{\b{u}}(\cdot, t_l)  - \Kk'\vec{\gamma}_{\vec{\b{u}}_{\kappa_k}}^{l, N_k}\|_{L^\infty(\Omega)} = 0. 
    \end{equation*}
    with $\Kk': \mathbb{R}^{I^\kappa} \rightarrow L^\infty(\Omega)$ being the scalar equivalent of $\Kk$ given by $\vec{u} \mapsto \sum_{i} \chi_{\Omega_i}\vec{u}_i$. \par
    If $\b{b}$ is Lipschitz continuous in $t$ and the input data fulfills (\ref{eqn:assreginputdataconv}), then
    \begin{align*}
         \sup_{0\leq l \leq N_k} \|\gamma_{\b{u}}(\cdot, t_l)  - \Kk'\vec{\gamma}_{\vec{\b{u}}_{\kappa_k}}^{l, N_k}\|_{L^\infty(\Omega)}  \in O(1/N_k+\kappa_k).
    \end{align*}
\end{corollary}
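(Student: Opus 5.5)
Fix $k$, write $\kappa=\kappa_k$, $N=N_k$, $h=T/N$, and fix $l$, $i\in I^\kappa$ and $x\in\Omega_i$. The plan is to compare the continuous and discrete integrity functions term by term: numerator integrand against numerator summand, and normaliser against normaliser. The approach mirrors the proofs of Proposition \ref{prop:discrrhserr} and Lemma \ref{lemm:discrweighterrbnd}, with $\omega_\delta$ replaced by the indicator $\chi_{[0,\delta]}$ (or by the weights $w_{ij}$).

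We write
\begin{equation*}
\gamma_{\b{u}}(x,t_l)=\frac{N(x)}{V(x)},\qquad (\vec{\gamma}^{l,N})_i=\frac{N_i}{V_i'},
\end{equation*}
where $V(x)=\vol(\Omega\cap B_\delta(x))$ and $V_i'=\sum_{j\neq i}w_{ij}V_j$. Then
\begin{equation*}
|N/V-N_i/V_i'|\le \frac{|N-N_i|}{V}+\frac{N_i}{V_i'}\,\frac{|V-V_i'|}{V},
\end{equation*}
and $N_i/V_i'\le 1$. The interior cone condition is used here: it gives a uniform lower bound $V(x)\ge c_\Omega\delta^d>0$. It then suffices to show that $|N-N_i|$ and $|V-V_i'|$ tend to zero uniformly in $x$, $i$ and $l$, at rate $O(\kappa+h)$ in the regular case.

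The normaliser difference $|V-V_i'|$ is a pure quadrature error. By Assumptions \ref{ass:quadweights}, $w_{ij}$ agrees with $\chi_{B_\delta(x)}$ on all cells $\Omega_j$ except those meeting the shell $\{\,|\|x'-x\|_2-\delta|\le 2\sqrt d\kappa\,\}$, and the cell $\Omega_i$ contributes at most $\kappa^d$. The shell has volume $O(\kappa)$, so $|V-V_i'|=O(\kappa)$. This is the analogue of Lemma \ref{lemm:alt39weights}.

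For the numerator we split
\begin{equation*}
|N-N_i|\le \sum_{j\neq i}\int_{\Omega_j}|\chi_{B_\delta(x)}(x')-w_{ij}|\dd x'+\sum_{j\neq i}\int_{\Omega_j\cap B_\delta(x)}|\mu(A)-\mu(B)|\dd x'+O(\kappa^d).
\end{equation*}
Here $A=\esssup_{0\le s\le t_l}\ca S(\b u(s),x',x)$, and $B=\max_{0\le k\le l}\ca S^{ij}(\vec{\b u}^{k,N}_\kappa)$. The first sum is $O(\kappa)$ by the shell argument above.

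For the second sum we first replace the discrete maximum over the time steps by a maximum of $\ca S(\Kk\vec{\b v}(s),\cdot)$ over $s\in[0,t_l]$, where $\vec{\b v}$ is the piecewise constant in time interpolant of the fully discrete solution. This is exact because the maximum is taken over a step function. We then apply (\ref{eqn:BndResultMu}) of Lemma \ref{lemm:auxbnd} with $\b u$ restricted to $[0,t_l]$. The error $\|\Kk\vec{\b v}(s)-\b u(s)\|_{L^\infty}$ is bounded by the nodal error of Theorem \ref{theo:fullconv} plus $\sup_{|s-s'|\le h}\|\b u(s)-\b u(s')\|_{L^\infty}\le h\|\dot{\b u}\|_{C^0}$. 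The factor $1/\|x'-x\|_2$ in (\ref{eqn:BndResultMu}) is harmless: integrating it over $B_\delta(x)$ gives a finite constant ($\le C\delta^{d-1}$ since $d\ge2$), and the term $\|x_j-x_i-x'+x\|_\infty\le\kappa$ contributes $O(\kappa)$. This yields convergence, and the rate $O(1/N_k+\kappa_k)$ under (\ref{eqn:assreginputdataconv}) follows from the rate in Theorem \ref{theo:fullconv}.

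The main obstacle is the normalisation. We need the lower bound on $V(x)$, and the discrete normaliser must also stay away from $0$ uniformly; the latter follows from the former together with $|V-V_i'|=O(\kappa)$ once $\kappa$ is small. A second point to watch is that the maximum over discrete time steps must be matched with the essential supremum over $[0,t_l]$, and this matching is exactly where the $O(h)$ term enters.
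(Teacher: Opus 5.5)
Your proposal is correct and follows essentially the same route as the paper's proof: the interior cone condition gives a uniform lower bound on $\vol(\Omega\cap B_\delta(x))$; an $O(\kappa)$ shell estimate plus the $\kappa^d$ self-cell term controls both the normaliser and the weight error in the numerator; a piecewise constant time interpolant makes (\ref{eqn:BndResultMu}) applicable; the bound $h\|\dot{\b u}\|$ plus the nodal error from Theorem \ref{theo:fullconv} handles the time discretization; and $\|\zeta\|_2^{-1}$ is integrable on $B_\delta(0)$ for $d\ge 2$. One slip to fix: after interpolating in time you should maximise $\ca S^{ij}(\vec{\b v}(s))$, not $\ca S(\Kk\vec{\b v}(s),x',x)$, because the latter uses the reference bond $x'-x$ instead of $x_j-x_i$, so it is neither equal to the discrete maximum nor the quantity that (\ref{eqn:BndResultMu}) compares against.
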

\begin{proof}
    We drop the $k$ subscript from $\kappa, N$ for convenience. First, we see that $\vol(\Omega \cap B_{\delta}(x)) \leq \vol(B_\delta(0))$ is bounded from above as well as from below for some $0<c_{ICC}< \vol(\Omega \cap B_{\delta}(x))$ uniformly in $x$, 
    given by the interior cone condition of $\Omega$.\newline
    Since we also have
    \begin{align*}
        |\vol(\Omega \cap B_{\delta}(x)) - \sum_{i\not=j} w_{ij}V_j| \leq \vol(B_{\delta + 3\sqrt{d}\kappa/2}(0)) - \vol(B_{\delta - 3\sqrt{d}\kappa/2}(0)) + \kappa^d \in O(\kappa),
    \end{align*}
    supplying lower bounds for $\sum_{i\not=j} w_{ij}V_j$ in the limit, we obtain
    \begin{equation}\label{eqn:convvoldiff}
        \left|\frac{1}{\vol(\Omega \cap B_{\delta}(x))} - \Big(\sum_{i\not=j} w_{ij}V_j\Big)^{-1}\right| \in O(\kappa)
    \end{equation}
    We then obtain for all $i\in I^\kappa$ and $x\in \Omega_i$ and $0 \leq n \leq N$,
    \begin{equation}
        \begin{aligned}
            &c_{ICC}|\gamma_{\b{u}}(x, t_n)  - (\vec{\gamma}_{\vec{\b{u}}_\kappa}^{n, N})_i|\nonumber\\
            &\leq c_{ICC} \left|\frac{1}{\vol(\Omega \cap B_{\delta}(x))} - \Big(\sum_{i\not=j} w_{ij}V_j\Big)^{-1}\right|\Big(\sum_{i\not=j} w_{ij}V_j\Big) \\
            &+\Big|\int_{\Omega \cap B_{\delta}(x)} \mu(\esssup_{0\leq s\leq t_n} \mathcal{S}(\b{u}(s), x',x)) \dd x' - \sum_{i\not=j} w_{ij} V_j \mu(\max_{0\leq l\leq n} \mathcal{S}^{ij}(\vec{\b{u}}^{l,N}_{\kappa}))\Big| \nonumber\\
        \end{aligned}
    \end{equation}
    Since the $\sum_{j\not=i} V_j w_{ij}$ terms converge as $\kappa \searrow 0$ by Assumption \ref{ass:quadweights}, we have with (\ref{eqn:convvoldiff}) that the first summand vanishes in $O(\kappa)$. We now focus on the second, bounding it by 
    \begin{align}
        &\Big|\sum_{i\not=j} \int_{\Omega_j}  \chi_{[0,\delta]}(\|x-x'\|_2)\Big(\mu(\esssup_{-T\leq s\leq 0} \mathcal{S}(\b{u}_{t_n}(s), x',x)) \nonumber \\
        &\phantom{==============} - \mu(\esssup_{-T\leq s\leq 0} \mathcal{S}^{ij}(\IP^{n+1}(\vec{\b{u}}^{0,...,n,N}_{\kappa})(s)))\Big) \dd x' \Big| \label{eqn:firstsummandconvinteg}\\
        &\quad+ \sum_{i\not=j} \int_{\Omega_j} |w_{ij} - \chi_{[0,\delta]}(\|x-x'\|_2)|\mu(\max_{0\leq l\leq n} \mathcal{S}^{ij}(\vec{\b{u}}^{l,N}_{\kappa}))\dd x'\nonumber + \kappa^d
    \end{align}
    where the $\kappa^d$ factor comes from the $i=j$ cell of the continuous integral. The last summand can be bounded by 
    \begin{align*}
        &\sum_{i\not=j} \int_{\Omega_j} |w_{ij} - \chi_{[0,\delta]}(\|x-x'\|_2)|\dd x'\leq \vol(B_{\delta + 3\sqrt{d}\kappa/2}(0)) - \vol(B_{\delta - 3\sqrt{d}\kappa/2}(0)) \in O(\kappa)
    \end{align*}
    
    Turning our attention to the first summand in (\ref{eqn:firstsummandconvinteg}), we obtain using Lemma \ref{lemm:auxbnd} applicable to the $\IP$ term since all of its values are taken on sets of measure greater $0$,
    \begin{align*}
        &\Big|\sum_{i\not=j} \int_{\Omega_j} \chi_{[0,\delta]}(\|x-x'\|_2)\Big(\mu(\esssup_{-T\leq s\leq 0} \mathcal{S}(\b{u}_{t_n}(s), x',x))  - \mu(\esssup_{-T\leq s\leq 0} \mathcal{S}^{ij}(\IP^{n+1}(\vec{\b{u}}^{0,...,n,N}_{\kappa})(s))) \dd x'\Big) \Big| \\
        &\leq  \Big| \sum_{i\not=j} \int_{\Omega_j}  \Big((\sqrt{d}+1)\tfrac{\sqrt{d}\ell\chi_{[0,\delta]}(\|x-x'\|_2)}{\|x'-x\|_2}\Big)( 2\|\IP^{n+1}(\vec{\b{u}}^{0,...,n,N}_{\kappa})-\b{u}_{t_n}\|_{\Linftyptw([-T,0],L^\infty(\Omega)^d)}\\
        &\qquad+ (S^+_1+2)\sqrt{d}\kappa) \dd x' \Big| \\
        &\leq \Big[(S^+_1+2)(\sqrt{d}+1)d\ell\kappa + 2(\sqrt{d}+1)\sqrt{d}\ell \|\IP^{n+1}(\vec{\b{u}}^{0,...,n,N}_{\kappa})-\b{u}_{t_n}\|_{
    \Linftyptw([-T,0],L^\infty(\Omega)^d)} \Big] \\
        &\quad \times\int_{B_{\delta}(0)} \|\zeta\|_2^{-1} \dd \zeta
    \end{align*}
    Due to $d\geq 2$ the last integral over $\|\zeta\|_2^{-1}$ is finite, as well as
     \begin{align*}
          &\|\IP^{n+1}(\vec{\b{u}}^{0,...,n,N}_{\kappa})-\b{u}_{t_n}\|_{\Linftyptw([-T,0],L^\infty(\Omega)^d)}\\
          &\leq Lh + \max_{0 \leq l \leq n} \| \b{u}(t_l) - \Kk \vec{\b{u}}^{l,N}_\kappa \|_{L^\infty(\Omega)^d}
     \end{align*}
    for the Lipschitz constant $L= \|\dot{\b{u}}\|_{\Linftyptw([0,T], L^\infty(\Omega)^d)} < \infty$ and $h:= T/N$. 
    The result of Theorem \ref{theo:fullconv} now applies.
\end{proof}

\begin{remark}[About the Assumptions, Part II]\label{rm:abouttheassumptionspt2} In order to circumvent the effect of the strong convergence requirements in Assumptions \ref{ass:numconv} as well as to use a more practical norm to measure the convergence, attempts to reformulate the equations on an $L^p(\Omega)^d$ space with $1\leq p<\infty$ are naturally interesting. However, these run into problems due to the $\esssup$ term in the argument of $\mu$, as the spatial $L^p(\Omega)^d$ norm cannot be swapped with the $L^\infty([0,t])$ norm that the essential supremum represents, even for jointly measurable representatives. This causes the Lipschitz continuity proof as given above for the peridynamic operator $F$ to fail. \par
    A mathematically natural approach to circumvent this would be to equivalently rewrite
    \begin{equation*}
        \mu(\esssup_{0\leq s\leq t} \ca{S}(\b{u}(s), x', x))
    \end{equation*}
    as, using $p=\infty$ and the symbolic convention $1/\infty := 0$ for the exponent,
    \begin{equation*}
        \mu(S^+_0 + \tfrac{1}{t_c^{1/p}}\|\max(0, \ca{S}(\b{u}(s),x',x)-S^+_0)\|_{L^p([0,t])}) 
    \end{equation*}
    which naturally also allows for $p<\infty$. The $t_c>0$ parameter is an additional material parameter included to get rid of the $s^{1/p}$ unit of the $\|\cdot\|_{L^p}$ norm due to the integral over $t$ in it and has the same physical unit as the time $t$. For $p=1$ and $t_c=1$ this would yield the damage term as described in \cite{Emmrich2016} and similar to \cite{Lipton2018}, which also uses a time integral. Note that the $1<p<\infty$ then transitions between the damage term in \cite{Emmrich2016} corresponding to $p=1$ and the one in \cite{WP_FRAC, SillMeshFree} corresponding to $p=\infty$. Additionally, the models for $p < \infty$ introduce a time-dependence into how the models accumulate damage, as the maximum is replaced with an integral accumulating stretch over the threshold $S^+_0$ over time. That is, minimally crossing the threshold $S^+_0$ for long enough will result in large damage and, conversely, a large stretch over a sufficiently short time span will not result in significant damage. As noted in \cite{Emmrich2016}, this allows modeling fatigue through cyclic loading, as each cycle adds to the accumulated stretch even if the maximum stretch is left unaffected. The parameter $t_c>0$ can be understood to model the sensitivity to strain, since it is scaling the accumulated damage accordingly. \par
    With this change one could attempt to establish similar convergence results with the assumptions and convergence formulated in the $L^p(\Omega)^d$ norm. Rigorous treatment of these models is however subject to further research. 
\end{remark}

\section{Properties of the Continuous Solution}
\label{sec:PropContSol}
The convergence of the piecewise constant discretization with respect to the $C^0([0,T],L^\infty(\Omega)^d)$ norm naturally implies that the continuous solution must have a certain spatial regularity that we are now aiming to prove explicitly. A similar regularity result for a different model was given in \cite{Emmrich2016}, Theorem 2.2, where global spatial continuity of the input data was used to show global continuity of the motion on $\overline{\Omega}$. \par
To talk about the properties of the displacement in specific points $x$, we need to repose the equations (\ref{eqn:conteq}) on a space that permits point-wise evaluations.
Doing so, repeating the arguments from the $L^\infty(\Omega)^d$ case as described in \cite{WP_FRAC}, we obtain the well-posedness of (\ref{eqn:conteq}) on $C^2([0,T], L^{\infty}_{\mathrm{ptw}}(\Omega; \mathbb{R}^d))$. 

\begin{assumptions}[Assumptions for the Pointwise Well-posedness]\label{ass:contexptw}
    \begin{enumerate}
    \item A horizon $\delta>0$ and a non-increasing measurable weighting function $\omega_\delta: (0,\infty) \rightarrow \mathbb{R}_{\geq 0}$, fulfilling
    \begin{equation}
        \ca{L} :=\int_{\mathbb{R}^d} \frac{\omega_\delta(\|\zeta\|_2)}{\|\zeta\|_2}\dd \zeta<\infty.
    \end{equation}
    \item The external body forces are continuous in time,  $\b{b}\in C^0([0, T], \Linftyptw(\Omega)^d)$ 
    \item The initial data is pointwise $u_0,v_0 \in \Linftyptw(\Omega)^d$
    \item A terminal time $0 <T <\infty$.
    \end{enumerate}
\end{assumptions}

\begin{lemma}[Well-posedness of the Pointwise Equations]
    The right hand side of (\ref{eqn:conteq}) as given in (\ref{eqn:defcontF}) can be redefined as $F^{\text{ptw}}: C^0([-T,0], \Linftyptw(\Omega)^d) \rightarrow \Linftyptw(\Omega)^d$ alongside with $\b{f}$ from (\ref{eqn:defcontfinteg}) to take displacements in $C^0([-T,0], \Linftyptw(\Omega)^d)$, without any changes besides removing the representation map $\Gamma$ since the functions at a time point $t$ are no longer equivalence classes.\par
    Let the Assumptions \ref{ass:contexptw} hold. Then, the corresponding equations to (\ref{eqn:conteq}) posed on \newline $ C^0([-T,0], \Linftyptw(\Omega)^d)$, given by
    \begin{equation}
        \label{eqn:conteqptw}
        \begin{rcases}
            \rho\ddot{\b{u}}_{\text{ptw}}(t) = F^{\text{ptw}}((\b{u}_{\text{ptw}})_t) + \b{b}(t)\\ 
            \b{u}_{\text{ptw}}(0) = u_0 \\
            \dot{\b{u}}_{\text{ptw}}(0) = v_0
        \end{rcases}
    \end{equation} 
    have a unique solution $\b{u}_{\text{ptw}} \in C^2([0,T], \Linftyptw(\Omega)^d)$.
\end{lemma}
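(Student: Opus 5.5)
We would mirror the $L^\infty(\Omega)^d$ well-posedness argument of \cite{WP_FRAC} (as already repeated for the semi-discrete case in Proposition \ref{prop:WPsemicont}), replacing equivalence classes by pointwise defined functions and essential suprema in $x$ by genuine suprema. The plan has three steps: (i) show $F^{\text{ptw}}$ is well defined, i.e.\ maps $C^0([-T,0],\Linftyptw(\Omega)^d)$ into $\Linftyptw(\Omega)^d$; (ii) show it is globally Lipschitz with respect to $\|\cdot\|_{C^0([-T,0],\Linftyptw(\Omega)^d)}$; (iii) reduce to first order exactly as in (\ref{eqn:semdiscreqred}), rewrite as an integral equation in the Banach space $C^0([0,T],\Linftyptw(\Omega)^d)^2$, and run the Picard iteration. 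This gives a unique fixed point in $C^1$ for the reduced system, hence $\b{u}_{\text{ptw}}\in C^2([0,T],\Linftyptw(\Omega)^d)$.

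For step (i), fix $x\in\Omega$. Since $\b{u}\in C^0$ in time with values in a sup-normed space, for fixed $x,x'$ the map $s\mapsto \ca{S}(\b{u}(s),x',x)$ is continuous wherever $\b{u}(s)$ varies continuously pointwise. Indeed $|\b{u}(s)(y)-\b{u}(s')(y)|\le\|\b{u}(s)-\b{u}(s')\|_{\sup}$ for every $y$, so the supremum over $s\in[-T,0]$ is a maximum of a continuous function and coincides with a supremum over a countable dense set. It is therefore measurable in $x'$, as a countable supremum of measurable functions. Since $\scalarlaw$ and $\mu$ are continuous and $e$ is measurable, $x'\mapsto\b{f}(x',x,\b{u})$ is measurable. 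It is bounded by $\scalarlaw^+\omega_\delta(\|x'-x\|_2)$, which is integrable because $\omega_\delta\le \delta\,\omega_\delta(\|\cdot\|)/\|\cdot\|$ on $B_\delta$. Hence $F^{\text{ptw}}(\b{u})(x)$ exists for every $x$, with $|F^{\text{ptw}}(\b{u})(x)|\le \delta\scalarlaw^+\ca{L}$. Measurability of $x\mapsto F^{\text{ptw}}(\b{u})(x)$ follows from Fubini/Tonelli applied to the jointly measurable integrand.

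For step (ii), we would copy the proof of Proposition \ref{prop:LScontdiscrrhs}, with sums replaced by integrals over $x'$. Fix $x$ and use the pointwise analogue of the trick $e_{\vec b}$: redefine $e$ for $\b{v}$ on bonds that are sub-critical for $\b{v}$ by the value for $\b{u}$, and vice versa. Then the analogues of (\ref{eqn:bndonsemdiscrS}) and (\ref{eqn:bndonsemdiscre}) hold with $\|x'-x\|_2$ in place of $\|x_j-x_i\|_2$ and with the pointwise differences bounded by the sup norm. For the $\mu$ term, the difference of the two suprema over $s$ is controlled by $\sup_s|\ca{S}(\b{u}(s),x',x)-\ca{S}(\b{v}(s),x',x)|$. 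Integrating against $\omega_\delta(\|x'-x\|_2)/\|x'-x\|_2$ gives
\[
|F^{\text{ptw}}(\b{u})(x)-F^{\text{ptw}}(\b{v})(x)|\le 2\sqrt d\big(\ell+\ell\scalarlaw^++\tfrac{2\scalarlaw^+}{S^-_1+1}\big)\ca{L}\,\|\b{u}-\b{v}\|_{C^0([-T,0],\Linftyptw(\Omega)^d)}
\]
for every $x$, hence in the sup norm.

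The main obstacle is step (i), specifically measurability. Making the ``sup over time'' measurable in $x'$ without the representative $\Gamma$ requires the continuity-in-time argument above. One must also check that $\Linftyptw(\Omega)^d$ is complete, which holds because pointwise uniform limits of measurable functions are measurable. Once this is in place, step (iii) is routine. We get $\|\b{u}_s-\b{v}_s\|\le\|\b{u}-\b{v}\|_{C^0([0,T])}$ for the history shift, and continuity in $t$ of $F^{\text{ptw}}((\b{u})_t)+\b{b}(t)$ follows from the Lipschitz bound together with the continuity of $t\mapsto \b{u}_t$ in $C^0([-T,0],\cdot)$, the latter by uniform continuity of $\b{u}$. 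The Picard bounds $ML^{n-1}t^n/n!$ then yield existence and uniqueness via Gr\"onwall.
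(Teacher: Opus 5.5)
Your proposal is correct and follows the paper's route. The paper proves Lipschitz continuity of $F^{\text{ptw}}$ in the sup norm as in \cite{WP_FRAC}, Lemma 1, using that for time-continuous data the essential supremum in $s$ is a genuine supremum and that the suprema over $s$ and $x$ can be interchanged, and then runs the Picard iteration of \cite{WP_FRAC}, Theorem 2; your step (i), covering measurability, well-definedness and completeness of $\Linftyptw(\Omega)^d$, only fills in details the paper leaves implicit.
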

\begin{proof}
    The pointwise map $F^{\text{ptw}}$ can be shown to be Lipschitz continuous for some $L_{F^{\text{ptw}}}>0$, 
    \begin{equation*}
        \|F^{\text{ptw}}(\b{u}) - F^{\text{ptw}}(\b{v})\|_{\Linftyptw(\Omega)^d} \leq L_{F^{\text{ptw}}}\|\b{u}-\b{v}\|_{C^0([-T,0], \Linftyptw(\Omega)^d)}
    \end{equation*}
     for all $\b{u},\b{v}\in C^0([-T,0],\Linftyptw(\Omega)^d)$, due to 
     \begin{equation*}
         \esssup_{-T\leq t\leq 0} \|\b{u}(t,x)\|_\infty =  \sup_{-T\leq t\leq 0} \|\b{u}(t,x)\|_\infty
     \end{equation*}
     for all $x\in \Omega$, as well as
     \begin{equation*}
         \sup_{-T\leq t\leq 0} \sup_{x\in\Omega}\|\b{u}(t,x)\|_\infty  = \sup_{x\in\Omega} \sup_{-T\leq t\leq 0}\|\b{u}(t,x)\|_\infty. 
     \end{equation*}
     This is analogous to \cite{WP_FRAC}, Lemma 1.
     The rest of the proof is then analogous to \cite{WP_FRAC}, Theorem 2, obtaining a unique solution using a Picard iteration argument.
\end{proof}
Let $\iota: \Linftyptw(\Omega)^d \rightarrow L^\infty(\Omega)^d$ denote the canonical surjection, we then easily obtain
\begin{equation}\label{eqn:corresp}
    \iota \circ F^{\text{ptw}} = F \circ {\iota}_* 
\end{equation}
on $C^0([-T,0], \Linftyptw(\Omega)^d)$, where ${\iota}_*$ is the post-composition map $\iota_*: C^0([-T, 0], \Linftyptw(\Omega)^d) \rightarrow C^0([-T, 0], L^\infty(\Omega)^d),\ \b{a} \mapsto \iota \circ \b{a}$. This map is also linear and bounded.
Combining these identities, this implies that the solution $\b{u}_{\mathrm{ptw}} \in C^2([0,T]; L^{\infty}_{\mathrm{ptw}}(\Omega)^d)$ of (\ref{eqn:conteqptw}) represents the original solution $\b{u} \in C^2([0,T]; L^{\infty}(\Omega)^d)$, for every $t\in [0,T]$. That is, $\b{u}_{\mathrm{ptw}}(t) \in \Meas(\Omega, \mathbb{R}^d)$ is a representation of $\b{u}(t)\in L^\infty(\Omega)^d$ for all $t\in [0,T]$. \par
We will now study the properties of the solution $\b{u}_{\text{ptw}}$, working towards Proposition \ref{prop:contcontsol} below. First, we obtain for fixed $x, \hat{x} \in \Omega$,
\begin{align*}
    &\b{u}_{\mathrm{ptw}}(t,x) - \b{u}_{\mathrm{ptw}}(t,\hat{x})\\
    &= u_0(x) - u_0(\hat{x})+ \int_{0}^t v_0(x) - v_0(\hat{x})\\
    &\phantom{= u_0(x) - u_0(\hat{x})+ \int_{0}^t}+\int_{0}^s F^{\text{ptw}}( (\b{u}_{\mathrm{ptw}})_\tau )( x) - F^{\text{ptw}}( (\b{u}_{\mathrm{ptw}})_\tau )( \hat{x}) + \b{b}(\tau, x) - \b{b}(\tau, \hat{x}) \dd \tau \dd s.
\end{align*}
Similarly to the discrete case, we also derive
\begin{enumerate}
    \item a bound on differences of the $\ca{S}$, as given by
    \begin{align}\label{eqn:bndcontcontS}
        |\ca{S}(u, x',x)-\ca{S}(u, x',\hat{x})| \leq \begin{cases}
            \tfrac{\sqrt{d}}{\|x'-x\|_2}\Big( \|u(x)-u(\hat{x})\|_\infty + (\ca{S}(u, x',\hat{x})+2)\|x-\hat{x}\|_\infty \Big)\\
            \tfrac{\sqrt{d}}{\|x'-\hat{x}\|_2}\Big( \|u(x)-u(\hat{x})\|_\infty + (\ca{S}(u, x',x)+2)\|x-\hat{x}\|_\infty \Big)
        \end{cases}
    \end{align} 
    \item a bound on the differences of $g$,
    \begin{align}\label{eqn:bndcontcontg}
        &|g(\ca{S}(u,x',x))-g(\ca{S}(u,x',\hat{x}))|\nonumber \\
        &\leq\begin{cases}
            0 & \ca{S}(u,x',x) >S^+_1 \cap \ca{S}(u,x',\hat{x}) > S^+_1\\
            \tfrac{\ell\sqrt{d}}{\|x'-x\|_2}\Big( \|u(x)-u(\hat{x})\|_\infty + (S^+_1+2)\|x-\hat{x}\|_\infty \Big) & \ca{S}(u,x',\hat{x}) \leq S^+_1\\
            \tfrac{\ell\sqrt{d}}{\|x'-\hat{x}\|_2}\Big( \|u(x)-u(\hat{x})\|_\infty + (S^+_1+2)\|x-\hat{x}\|_\infty \Big) &  \ca{S}(u,x',{x}) \leq S^+_1
        \end{cases}
    \end{align}
    \item a bound on the differences of $\mu$, 
    \begin{align}\label{eqn:bndcontcontMu}
        &|\underbrace{\mu(\sup_{-T\leq t \leq 0}\ca{S}(\b{u}(t),x',x))}_{=:A}-\underbrace{\mu(\sup_{-T\leq t \leq 0}\ca{S}(\b{u}(t),x',\hat{x}))}_{=:B}|\nonumber \\
        &\leq\begin{cases}
            0 & A=B=0\\
            \tfrac{\ell\sqrt{d}}{\|x'-x\|_2}\Big(\sup_{-T\leq t\leq 0}\|\b{u}(t,x)-\b{u}(t,\hat{x})\|_\infty + (S^+_1+2)\|x-\hat{x}\|_\infty \Big) & A>0\\
            \tfrac{\ell\sqrt{d}}{\|x'-\hat{x}\|_2}\Big(\sup_{-T\leq t\leq 0} \|\b{u}(t,x)-\b{u}(t,\hat{x})\|_\infty + (S^+_1+2)\|x-\hat{x}\|_\infty \Big) &  B>0
        \end{cases}
    \end{align}
    \item a bound on differences of $e$, assuming $x'-x$, $x'-\hat{x}$ are non-degenerate bonds for $u$ in $x$, given by \begin{align}\label{eqn:bndcontconte}
        |e(u, x', x) -e(u, x', \hat{x})| 
        \leq \begin{cases}
            \tfrac{2\sqrt{d}}{(\ca{S}(u, x',x)+1)\|x'-x\|_2} (\|{u}(x)-{u}(\hat{x})\|_\infty + \|x-\hat{x}\|_\infty) \\
            \tfrac{2\sqrt{d}}{(\ca{S}(u, x',\hat{x})+1)\|x'-\hat{x}\|_2} (\|{u}(x)-{u}(\hat{x})\|_\infty + \|x-\hat{x}\|_\infty).
        \end{cases}
    \end{align}
\end{enumerate}
We also need an analogue of Lemma \ref{lemm:unifbndinteg} for different base points $x$ and $\hat{x}$.

\begin{lemma}[Uniform Bound of Semi-Translated Kernel Integral]\label{lemm:unifbndintegcont} Let $x,\hat{x} \in \Omega$, then
    \begin{align}\label{eqn:unifbndcntintegtransl}
        0 \leq \int_{\Omega} \frac{\omega_\delta(\|x'-x\|_2)}{\|x'-\hat{x}\|_2}\dd x' \leq \boundkerinteg
    \end{align}
\end{lemma}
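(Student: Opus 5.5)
The plan is to reduce the statement to the Hardy--Littlewood rearrangement inequality, or, equivalently and more elementarily, to a layer-cake argument combined with a ``bathtub'' comparison of balls. Nonnegativity is immediate, since $\omega_\delta\geq 0$. For the upper bound, I first enlarge the domain of integration from $\Omega$ to $\mathbb{R}^d$; the integrand is nonnegative, so this only increases the integral. It then suffices to show
\begin{equation*}
    \int_{\mathbb{R}^d} \frac{\omega_\delta(\|x'-x\|_2)}{\|x'-\hat{x}\|_2}\dd x' \leq \int_{\mathbb{R}^d} \frac{\omega_\delta(\|x'-x\|_2)}{\|x'-x\|_2}\dd x' = \boundkerinteg .
\end{equation*}
The equality on the right is just the translation $\zeta = x'-x$ in the definition of $\boundkerinteg$ from Assumptions \ref{ass:contexptw}.

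For the inequality, I write $\omega_\delta(\|x'-x\|_2)$ in layer-cake form, $\omega_\delta(\|x'-x\|_2) = \int_0^\infty \chi_{\{\omega_\delta(\|\cdot - x\|_2) > \lambda\}}(x') \dd \lambda$. Because $\omega_\delta$ is non-increasing, each superlevel set $\{x' : \omega_\delta(\|x'-x\|_2)>\lambda\}$ is a ball $B_{r(\lambda)}(x)$, open or closed, possibly empty or, for small $\lambda$, a ball minus its center. It is contained in $\overline{B_\delta(x)}$ since $\omega_\delta$ vanishes beyond $\delta$. The difference between the open and closed versions is a null set, so it does not matter. Tonelli then lets me exchange the $\lambda$- and $x'$-integrals.

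The key step is the bathtub estimate: for every $r\geq 0$,
\begin{equation*}
    \int_{B_r(x)} \frac{\dd x'}{\|x'-\hat{x}\|_2} \leq \int_{B_r(\hat{x})} \frac{\dd x'}{\|x'-\hat{x}\|_2}.
\end{equation*}
To prove it, I compare the sets $A := B_r(x)\setminus B_r(\hat x)$ and $A' := B_r(\hat x)\setminus B_r(x)$, which have equal measure. On $A$ we have $\|x'-\hat{x}\|_2\geq r$, so the integrand is at most $1/r$. On $A'$ we have $\|x'-\hat{x}\|_2 < r$, so the integrand exceeds $1/r$. The common part $B_r(x)\cap B_r(\hat x)$ cancels. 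Integrating this estimate over $\lambda$ and reversing the layer cake, now with the balls centered at $\hat x$ and therefore reproducing $\omega_\delta(\|x'-\hat x\|_2)$, gives
\begin{equation*}
    \int \frac{\omega_\delta(\|x'-\hat x\|_2)}{\|x'-\hat{x}\|_2}\dd x' = \boundkerinteg,
\end{equation*}
which is the claim.

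I expect the main care point to be bookkeeping rather than anything deep. The issues are the measurability of the level sets, handled by the monotonicity of $\omega_\delta$ and by working up to null sets; the possibly infinite values near $\lambda \to \sup \omega_\delta$ when $\omega_\delta$ is unbounded at $0$, which Tonelli handles with nonnegative integrands; and checking that the rewriting is legitimate even though $\omega_\delta(0)$ is undefined, which concerns only a single point. Alternatively, I could simply cite the Hardy--Littlewood rearrangement inequality $\int fg \leq \int f^*g^*$. Here $f = \omega_\delta(\|\cdot-x\|_2)$ and $g = \|\cdot-\hat x\|_2^{-1}$ are both nonnegative and vanish at infinity, and their symmetric decreasing rearrangements are $\omega_\delta(\|\cdot\|_2)$ and $\|\cdot\|_2^{-1}$. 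This gives the bound in one line.
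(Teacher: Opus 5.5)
Your proposal is correct and follows essentially the paper's own route. The paper also decomposes $\omega_\delta(\|x'-x\|_2)$ by the layer-cake formula into balls centered at $x$, uses that the ball of equal volume centered at the singularity $\hat{x}$ maximizes $\int 1/\|x'-\hat{x}\|_2 \, dx'$, and reverses the layer cake to recover $\mathcal{L}$. The difference is that you actually prove the ball comparison (via the equal-measure sets $A$ and $A'$) and handle the null-set and Tonelli bookkeeping, whereas the paper only asserts the corresponding bathtub property, for arbitrary measurable sets of volume at most that of $B_r(0)$.
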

\begin{proof}
    Intuitively speaking, the integral in (\ref{eqn:unifbndcntintegtransl})  should behave more nicely than the integral in (\ref{eqn:boundcond}), as $\omega_\delta(z)$  just like $1/z$ is non-increasing for $z>0$, meaning that the largest values multiply in the integrand of (\ref{eqn:boundcond}), but are offset in (\ref{eqn:unifbndcntintegtransl}). To make this thought rigorous, we note that for any measurable set $A\subset\mathbb{R}^d$ and $\vol(A)\leq \vol(B_r(0))$ for some $r>0$ we have
    \begin{equation*}
        \int_{A} \frac{1}{\|\zeta\|_2} \dd \zeta \leq \int_{B_r(0)} \frac{1}{\|\zeta\|_2}\dd \zeta.
    \end{equation*}
    That is, the ball around the origin, which is where the singularity is, maximizes the value of the integral over all sets with a given volume $\vol(B_r(0))$. This formalizes the idea of largest values combining if applied to level sets $A=\{x' : \omega_\delta(\|\cdot - x\|_2)>s\}$ for any $s\geq 0$. To use this explicitly, we use the layer cake representation of (\ref{eqn:unifbndcntintegtransl}),  
\begin{align}\label{eqn:ineqhelpunifbndcnttr}
        \int_{\Omega} \frac{\omega_\delta(\|x'-x\|_2)}{\|x'-\hat{x}\|_2}\dd x' \leq \int_0^\infty\int_{\{x'\in\mathbb{R}^d : \omega_\delta(\|x'-x\|_2) > s\}} \frac{1}{\|x'-\hat{x}\|_2}\dd x' \dd s
    \end{align}
   We see that $\vol(\{x'\in\mathbb{R}^d : \omega_\delta(\|x'-x\|_2) > s\}) = \vol(\{x'\in\mathbb{R}^d : \omega_\delta(\|x'-\hat{x}\|_2) > s\})$. Further, the level sets $\{x'\in\mathbb{R}^d : \omega_\delta(\|x'-\hat{x}\|_2) > s\}$ are balls $B_{r_s}(0)$ for some $r_s\geq 0$ dependent on $s\geq 0$, due to the monotonicity and radial symmetry of $\omega_\delta$. With a small translation to allow the assumption $\hat{x}=0$ without loss of generality, we obtain with (\ref{eqn:ineqhelpunifbndcnttr}),
    \begin{equation*}
        \int_0^\infty\int_{\{x'\in\mathbb{R}^d : \omega_\delta(\|x'-x\|_2) > s\}} \frac{1}{\|x'-\hat{x}\|_2}\dd x' \dd s\\
        \leq \int_0^\infty\int_{\{x'\in\mathbb{R}^d : \omega_\delta(\|x'-\hat{x}\|_2) > s\}} \frac{1}{\|x'-\hat{x}\|_2}\dd x' \dd s\\
        =\mathcal{L}.
    \end{equation*}
    The last equality follows from reversing the layer cake representation again. 
\end{proof}

We use the same trick as in the proof of Proposition \ref{prop:discrrhserr} and \ref{prop:LScontdiscrrhs} to deal with degenerate or sub-critical bonds. Defining,
\begin{align*}
    e_x(u,x',\hat{x}) := \begin{cases}
        e(u,x',\hat{x}) & \text{if } \ca{S}(u,x',\hat{x}) \geq S^-_1 \\
        e(u,x',x) & \text{else.}
    \end{cases}
\end{align*}
alongside with, for $\hat{x} \in \Omega$,
\begin{align*}
    F^{\text{ptw}}_x(\b{a})(\hat{x}) = \int_\Omega \omega_\delta(\|x'-\hat{x}\|_2)\scalarlaw(\ca{S}(\b{a}(0),x',\hat{x}))\mu(\esssup_{-T\leq s\leq 0} \ca{S}(\b{a}(s),x',\hat{x})) e_x(\b{a}(0), x',\hat{x}) \dd x' 
\end{align*}
for $x\in\Omega$ and $\b{a} \in C^0([-T,0], L^\infty(\Omega)^d)$.\par
This allows us to apply the inequalities (\ref{eqn:bndcontcontS}), (\ref{eqn:bndcontcontMu}), (\ref{eqn:bndcontcontg}) and (\ref{eqn:bndcontconte}), choosing the right branch as we did in Proposition \ref{prop:LScontdiscrrhs} and \ref{prop:discrrhserr} alongside with Lemma \ref{lemm:unifbndintegcont} to derive a bound for differences of $F$ as follows,
\begin{align*}
    &\|F^{\text{ptw}}( (\b{u}_{\mathrm{ptw}})_t )(x) - F^{\text{ptw}}( (\b{u}_{\mathrm{ptw}})_t )(\hat{x})\|_\infty = \|F^{\text{ptw}}( (\b{u}_{\mathrm{ptw}})_t )(x) - F^{\text{ptw}}_x( (\b{u}_{\mathrm{ptw}})_t )(\hat{x})\|_\infty \\
    &\leq \scalarlaw^+\int_{\Omega} |\omega_\delta(\|x'-x\|_2)-\omega_\delta(\|x'-\hat{x}\|_2)| \dd x' \\
    &\quad+2\sqrt{d}\ell \boundkerinteg( \|\b{u}_{\mathrm{ptw}}(t,x)-\b{u}_{\mathrm{ptw}}(t,\hat{x})\|_\infty + (S^+_1+ 2)\|x-\hat{x}\|_\infty)\\
    &\quad+2\sqrt{d}\ell \scalarlaw^+  \boundkerinteg (\max_{0\leq s\leq t}\|\b{u}_{\mathrm{ptw}}(s,x)-\b{u}_{\mathrm{ptw}}(s,\hat{x})\|_\infty+  (S^+_1+ 2)\|x-\hat{x}\|_\infty) \\
    &\quad+ 4\sqrt{d}\tfrac{\scalarlaw^+}{(S^-_1 +1)} \boundkerinteg(\|\b{u}_{\mathrm{ptw}}(t,x)-\b{u}_{\mathrm{ptw}}(t,\hat{x})\|_\infty+ \|x-\hat{x}\|_\infty).
\end{align*}
For simplicity, define
\begin{equation}\label{eqn:defCICII}
\begin{aligned}
    C_I&:=  4\sqrt{d}\tfrac{\scalarlaw^+}{S^-_1+1}+2\sqrt{d}\ell(S^+_1+2) + 2\sqrt{d}\ell \scalarlaw^+(S^+_1+2), \\
    C_{II}&:= 4\sqrt{d}\tfrac{\scalarlaw^+}{S^-_1+1}+2\sqrt{d}\ell + 2\sqrt{d}\ell \scalarlaw^+
\end{aligned}
\end{equation}
This leads to bounds for the solution $\b{u}_{\text{ptw}}$ for constants $0<C_I, C_{II}<\infty$ independent of $t, x, \hat{x}$ and $x'$,
\begin{align*}
    &\|\b{u}_{\mathrm{ptw}}(t,x) - \b{u}_{\mathrm{ptw}}(t,\hat{x}) \|_\infty \\
    &\leq t\int_0^t\|\b{b}(s,x) - \b{b}(s,\hat{x})\|_{\infty} \dd s + t \|v_0(x) - v_0 (\hat{x})\|_\infty+ \|u_0(x) - u_0 (\hat{x})\|_\infty\\
    &\quad+\tfrac{1}{2}t^2\scalarlaw^+\int_{\Omega} |\omega_\delta(\|x'-x\|_2)-\omega_\delta(\|x'-\hat{x}\|_2)| \dd x'  + \tfrac{1}{2}t^2 C_I\boundkerinteg \|x-\hat{x}\|_\infty\\
    &\quad+ \int_0^t (t-s)C_{II} \boundkerinteg\max_{0\leq \tau\leq s} \|\b{u}_{\mathrm{ptw}}(\tau,x)-\b{u}_{\mathrm{ptw}}(\tau,\hat{x})\|_\infty \dd s
\end{align*}
where in the last integral we used Fubini to rewrite the double integral, allowing us to replace the inner integral over $\tau$ with a factor $t-s$.
With the generalized Gr\"onwall inequality from \cite{YE20071075}, Corollary 2 with $\beta =2$, we obtain the following growth bounds, 
\begin{equation}\label{eqn:fingroenregcontsol}
\begin{aligned}
    &\|\b{u}_{\mathrm{ptw}}(t,x) - \b{u}_{\mathrm{ptw}}(t,\hat{x}) \|  \\
    &\leq\Big(t\int_0^t\|b(s,x) - b(s,\hat{x})\|_{\infty} \dd s + t \|v_0(x) - v_0 (\hat{x})\|_\infty + \|u_0(x) - u_0 (\hat{x})\|_\infty\\
    &\quad +\tfrac{1}{2}t^2\scalarlaw^+\int_{\Omega} |\omega_\delta(\|x'-x\|)-\omega_\delta(\|x'-\hat{x}\|)| \dd x' +\tfrac{1}{2}t^2 C_{I} \boundkerinteg\|x-\hat{x}\|_\infty)\Big) \\
    &\qquad\times\cosh(t\sqrt{C_{II}\boundkerinteg}).
\end{aligned}
\end{equation}

This implies the final proposition.

\begin{proposition}[Continuity of the Continuous Solution] \label{prop:contcontsol}
    Let Assumptions \ref{ass:contexptw} hold. There is a representation $\b{u}_{\mathrm{ptw}}\in C^0([0,T], L^\infty_{\mathrm{ptw}}(\Omega;\mathbb{R}^d))$ of the solution $\b{u} \in C^0([0,T], L^\infty(\Omega)^d)$ of (\ref{eqn:conteq}), characterized by $\b{u}_{\mathrm{ptw}}(t) \in L^\infty_{\text{ptw}}(\Omega; \mathbb{R}^d)$ being a representation of $\b{u}(t) \in L^\infty(\Omega)^d$ for all $t\in [0,T]$, such that 
    \begin{enumerate}
        \item If $u_0, v_0 \in L_{\text{ptw}}^\infty(\Omega;\mathbb{R}^d)$ and $\b{b}\in C^0([0,T], L_{\text{ptw}}^\infty(\Omega;\mathbb{R}^d))$ are continuous in a given $x_0\in \Omega$, the latter for all $t\in [0,T]$, then $\b{u}_{\mathrm{ptw}}(t)$ is continuous in $x_0$ for all $t\in [0,T]$. 
        \item Furthermore, if $v_0, u_0\in L_{\text{ptw}}^\infty(\Omega;\mathbb{R}^d)$, $\b{b}(t) \in L_{\text{ptw}}^\infty(\Omega;\mathbb{R}^d)$ are locally Lipschitz continuous in a given $x_0\in\Omega$, uniform with respect to $t\in [0,T]$, then $\b{u}_{\mathrm{ptw}}(t)$ is locally Lipschitz continuous in $x_0$ as well, with a Lipschitz constant independent of $t\in [0,T]$. If the Lipschitz continuity is global in the input data, then $\b{u}_{\text{ptw}}$ is globally Lipschitz continuous.
    \end{enumerate}
\end{proposition}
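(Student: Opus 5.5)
The plan is to read both claims directly off the growth bound (\ref{eqn:fingroenregcontsol}). The representation $\b{u}_{\mathrm{ptw}}$ is the unique solution of (\ref{eqn:conteqptw}) from the pointwise well-posedness lemma. By (\ref{eqn:corresp}) and uniqueness in $C^2([0,T],L^\infty(\Omega)^d)$, $\iota_*\b{u}_{\mathrm{ptw}}$ solves (\ref{eqn:conteq}). Hence $\b{u}_{\mathrm{ptw}}(t)$ represents $\b{u}(t)$ for every $t$, which settles the existence part of the statement.

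Next I fix $x$ and $\hat x$ and take (\ref{eqn:fingroenregcontsol}) as given. The factor $\cosh(t\sqrt{C_{II}\boundkerinteg})$ is bounded by $\cosh(T\sqrt{C_{II}\boundkerinteg})$, independently of $t$, $x$ and $\hat x$. So it suffices to show that each term in the bracket tends to $0$ as $\hat x\to x_0$ with $x=x_0$ for claim 1. For claim 2, each term should be $O(\|x-\hat x\|)$ for $x,\hat x$ in a neighborhood of $x_0$.

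The data terms are handled as follows.
\begin{itemize}
\item $\|u_0(x_0)-u_0(\hat x)\|$ and $t\|v_0(x_0)-v_0(\hat x)\|$ are immediate from the continuity or Lipschitz assumptions on $u_0,v_0$.
\item For $t\int_0^t\|\b{b}(s,x_0)-\b{b}(s,\hat x)\|\dd s$ in claim 1, the integrand tends to $0$ pointwise in $s$. It is dominated by $2\|\b{b}\|_{C^0([0,T],\Linftyptw)}$, so dominated convergence applies. For claim 2, the uniform-in-$t$ Lipschitz assumption gives the bound $T^2L_b\|x-\hat x\|$ directly.
\item The term $\tfrac12 t^2C_I\boundkerinteg\|x-\hat x\|_\infty$ is already Lipschitz.
\end{itemize}

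The main obstacle is the kernel-translation term $\int_\Omega|\omega_\delta(\|x'-x\|_2)-\omega_\delta(\|x'-\hat x\|_2)|\dd x'$. For continuity it suffices to bound it by the corresponding integral over $\mathbb{R}^d$ and invoke continuity of translations in $L^1(\mathbb{R}^d)$. This applies because $\omega_\delta(\|\cdot\|_2)\in L^1$, by Lemma \ref{lemm:shellbounds}(3) with $r=\delta$.

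For the Lipschitz rate I would mimic Lemma \ref{lemm:discrweighterrbnd}, setting $s:=\|x-\hat x\|_2$ and centering polar coordinates at $x$.
\begin{itemize}
\item On $\{\|x'-x\|_2\geq 2s\}$ we have $|\|x'-\hat x\|_2-\|x'-x\|_2|\le s$. Claims 1 and 2 of Lemma \ref{lemm:shellbounds} then give at most $3\cdot2^{d-1}s\boundkerinteg$.
\item On $\{\|x'-x\|_2<2s\}\subset B_{3s}(\hat x)$, I bound the integrand by the sum of the two kernels. Claim 3 of Lemma \ref{lemm:shellbounds} gives at most $2s\boundkerinteg+3s\boundkerinteg$.
\end{itemize}
Hence this term is at most $C s\boundkerinteg$ with $C$ depending only on $d$. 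Since $\boundkerinteg$ may scale badly in $\delta$, this is where the $\delta$-dependence of the Lipschitz constant enters.

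Combining these bounds, I obtain $\|\b{u}_{\mathrm{ptw}}(t,x)-\b{u}_{\mathrm{ptw}}(t,\hat x)\|\le \Lambda\|x-\hat x\|$ on a neighborhood $U\ni x_0$ on which the data are Lipschitz. Here $\Lambda$ depends on the data Lipschitz constants, $T$, $\boundkerinteg$, $C_I$ and $C_{II}$, but not on $t$, which gives claim 2. If the data are globally Lipschitz, the same computation holds for all $x,\hat x\in\Omega$, since none of the kernel estimates used locality; this gives global Lipschitz continuity. Equivalence of the norms $\|\cdot\|_\infty$ and $\|\cdot\|_2$ only changes constants.
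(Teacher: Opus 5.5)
Your proposal is correct and follows the paper's proof essentially step for step. Both read the claims off (\ref{eqn:fingroenregcontsol}) with the $\cosh$ factor bounded by its value at $t=T$, use dominated convergence for the $\b{b}$-integral, and control the kernel-translation term by the same split at $\|x'-x\|_2=2s$ via Lemma \ref{lemm:shellbounds}, giving (\ref{eqn:BndKernelLS}) with $C_d=3\cdot 2^{d-1}+5$. Your one deviation, $L^1$-continuity of translations for claim 1, tacitly uses that $\omega_\delta$ vanishes beyond $\delta$, which Assumptions \ref{ass:contexptw} do not restate (boundedness of $\Omega$ would also suffice). It is redundant anyway, since your own Lipschitz estimate of that term already yields continuity, which is how the paper treats both claims.
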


\begin{proof}
    We first attend to the weighting function $\omega_\delta$. Again, we will use Lemma \ref{lemm:shellbounds} made applicable by Assumptions \ref{ass:contexptw}, to bound the term
    \begin{align*}
         \int_{\mathbb{R}^d} |\omega_\delta(\|x-x'\|_2) - \omega_\delta(\|\hat{x}-x'\|_2)| \dd x',
    \end{align*}
    We again have for $s:= \|x-\hat{x}\|_2$, $r(x'):= \|x-x'\|_2$,  $l(x'):= \|\hat{x}-x'\|_2$, that
    \begin{align*}
        &\int_{\mathbb{R}^d} |\omega_\delta(\|x-x'\|_2) - \omega_\delta(\|\hat{x}-x'\|_2)| \dd x' \\
        &\leq \int_{\|x-x'\|\geq 2s} \omega_\delta(r(x') - s) - \omega_\delta(r(x') +s) \dd x' + \int_{\|x-x'\|\leq 2s} \omega_\delta(l(x')) + \omega_\delta(r(x')) \dd x' \\
        &\leq 3\cdot 2^{d-1} s \boundkerinteg + 3s \boundkerinteg + 2s \boundkerinteg,
    \end{align*}
    due to $r(x') \leq 2s$ implying $l(x') \leq 3s$. We then obtain using $s = \|x-\hat{x}\|_2$
\begin{align}\label{eqn:BndKernelLS}
        \int_{\mathbb{R}^d} |\omega_\delta(\|x-x'\|_2) - \omega_\delta(\|\hat{x}-x'\|_2)| \dd x' \leq C_d\boundkerinteg\|x-\hat{x}\|_2
    \end{align}
    with $C_d:= 3\cdot 2^{d-1} + 5$.\newline
    
    Now, for 1: The continuity follows from (\ref{eqn:fingroenregcontsol}), (\ref{eqn:BndKernelLS}) and the requirements in 1. We additionally apply the dominated convergence theorem to obtain
    \begin{equation*}
        \int_0^t \|\b{b}(s,x)-\b{b}(s,\hat{x})\|_\infty \dd s \rightarrow 0
    \end{equation*}
    as $\hat{x} \rightarrow x$.\newline
    For 2: The local Lipschitz continuity follows from (\ref{eqn:BndKernelLS}) and from the requirements in 2 applied to (\ref{eqn:fingroenregcontsol}). The existence of a Lipschitz constant $L^*$ such that $L_t \leq L^* <\infty$ for all $0\leq t \leq T$ follows from choosing $t=T$. 
\end{proof}

\begin{remark}[About the Convergence of the Discrete Schemes]\label{rm:DiscrConv} Proposition \ref{prop:contcontsol} explains the convergence result given in Theorem \ref{theo:convsemidiscr}, as all discontinuities present in the final solution have to be already present in the input data. This on the other hand requires the discontinuities in the input data to lie inside the closure of boundaries of all discrete elements $\Omega_i$ for all $\kappa_n$, $n\in \mathbb{N}$ in a given sequence of $\kappa_n \searrow 0$. This is enforced by the strict convergence requirements
\begin{equation*}
    \|\Kk\vec{\b{b}}_\kappa - \b{b}\|_{C^0([0,T], L^\infty(\Omega)^d)}\searrow 0,\  \|v_0 - \Kk \vec{v}_0^\kappa\|_{ L^\infty(\Omega)^d}\searrow 0,\  \|u_0 - \Kk \vec{u}_0^\kappa\|_{ L^\infty(\Omega)^d} \searrow 0.
\end{equation*}
Proposition \ref{prop:contcontsol} indicates that fracture within the bulk appears not as a true discontinuity but rather as a steep but continuous function. In this sense, the model can be understood as a regular approximation of a truly discontinuous fracture model. Note that the horizon $\delta>0$ affects the bounds (\ref{eqn:fingroenregcontsol}) directly, such that they diverge as $\delta \searrow 0$. This would allow the displacement to become steeper as $\delta \searrow 0$. 
We do not claim the convergence of the peridynamic model to a sharp discontinuity model as $\delta \searrow 0$, but merely state that the practically observed localization of damage as $\delta\searrow 0$, which necessarily implies a steeper displacement in the presence of continuity is consistent with the bounds given here. \par
This interpretation is consistent with other works drawing parallels with phase-field approaches, interpreting other peridynamic-type models as hybrid phase-field type models with length parameter $\delta>0$ and a damage field whose evolution follows from the constitutive law (e.g. \cite{COSKUN2025106302, bromley2026blendedapproachevolvingphase}). 
\end{remark}

\begin{remark}[On the Size and $\delta$-Dependence of the Continuity Bounds]\label{rm:bonddiscuss}
The constants in (\ref{eqn:fingroenregcontsol}) result from a worst-case Gr\"onwall argument and carry no quantitative information. For the parameters of Section \ref{sec:numerics} ($E = 100$, $\delta = 0.2$, $T = 0.1$), the argument of the hyperbolic cosine in (\ref{eqn:fingroenregcontsol}) is approximately $3\cdot 10^3$; it remains of order $10^2$ even if $\ell$, $g^+$ and $(S_1^- + 1)^{-1}$ are all set to one, compensating for the fact that these constants are potentially active only on small volumes in the horizon. 
Proposition \ref{prop:contcontsol} is therefore to be understood as a purely qualitative statement. Given a fixed $\delta > 0$, continuity and local Lipschitz continuity of the data are preserved, regardless of damage. In particular, the divergence of the bound as $\delta \searrow 0$ does not imply that solutions become steeper. Such divergence is, however, unavoidable if solutions for fixed data are to localize and develop jumps in the limit $\delta \searrow 0$. Whether and how the actual slopes grow as $\delta \searrow 0$ is beyond the present analysis, as is the resolution required to capture them at a given $\delta$. The latter is further studied numerically in Section \ref{sec:numerics}.
\end{remark}

\section{A Numerical Study of the Convergence}
\label{sec:numerics}
This section serves to illustrate the results of Theorem \ref{theo:fullconv} as well as Proposition \ref{prop:contcontsol} using numerical results. The aim is not to perform a rigorous numerical analysis, but to merely see how the theoretical results are visible in practice. Section \ref{sect:numex} will define the example model used for the numerical studies in (\ref{eqn:defker2d}) and (\ref{eqn:problset}), which will be reused in Section \ref{sect:numexconv}. 

\subsection{Visualization of the Continuity}\label{sect:numex}
We will now choose a specific model for $d=2$ in order to illustrate the theoretical results in Sections \ref{sec:ConvFullyDiscr} and \ref{sec:PropContSol}. First, set  $\Omega = (0,2)\times (0,1)$ and $\omega_\delta$ chosen as a  Lipschitz continuous cone function, given as in \cite{WP_FRAC, Bobaru2015Cracks} by 
\begin{equation}\label{eqn:defker2d}
    \omega_\delta(\|\zeta\|_2) = \begin{cases}
        \tfrac{36 E}{\delta^3\pi }(1 - \tfrac{\|\zeta\|_2}{\delta}) & \|\zeta\|_2\leq \delta \\
        0 & \text{else}
    \end{cases}
\end{equation}
where we choose a Young's modulus of $E=100$, and a horizon $\delta=0.2$. For the external body forces, we define
\begin{align}\label{eqn:problset}
    \b{b}((x,y)^\top,t) = \begin{cases}
        (0,  f\cdot x/2)^\top & y\in [1-\delta, 1] \\
        -(0, f\cdot x/2)^\top & y \in [0, \delta] \\
          0        & \text{else.}
    \end{cases}
\end{align}
with $f=125$. We further use the exact partial area weights defined in \cite{ImprovOnePointQuadAlgs}. Note that $\b{b}$ is constant with respect to time and has a non-trivial spatial discontinuity set $(0,2)\times \{\delta\} \cup (0,2)\times \{1-\delta\}$. To fulfill the Assumptions in \ref{ass:numconv}, it is sufficient to choose $\kappa^{-1} = \delta^{-1}k$ for some $k\in \mathbb{N}$ and $\vec{o}_\kappa = (\kappa/2,\kappa/2)^T$. We choose $\kappa^{-1} = 5 \cdot 64 = 320$, together with  $(\vec{\b{b}}_{\kappa})_i = \b{b}(x_i)$. Further, we set the continuous and discrete initial data to be $0$ everywhere. Finally, we set $G_0=1$ and $S^+_1=\sqrt{\tfrac{5\pi G_0}{9E\delta}}$ (\cite{Bobaru2015Cracks}, Equation 9), $S^+_0=0.98S^+_1$, $S_1^- = -0.999$, $S_0^- = -0.998$. The central difference scheme was run with $T=0.1$ and $N=8000$, leading to $h=1.25\cdot 10^{-5}$. We note the lower bound $S^-_0$ has never been crossed during execution. Figure \ref{fig:displconst} is obtained by drawing each element $\Omega_i$ of the domain in reference configuration displaced by $\vec{\b{u}}_i$ at different time steps.
The color of the elements indicates the presence of damage using the integrity function given by (\ref{eqn:damagefunc}). \par

To illustrate Remark \ref{rm:bonddiscuss}, we can explicitly construct the Lipschitz constants in $x$ where $\b{b}$ vanishes on a neighborhood of $x$, given through (\ref{eqn:fingroenregcontsol})
\begin{equation}\label{eqn:bnd2dexpl}
    L_{x,t}:=\tfrac{1}{2}t^2 \Big(C_d\scalarlaw^++C_{I}\Big)\boundkerinteg \cosh(t\sqrt{ C_{II}\boundkerinteg})
\end{equation}
where $C_{I}, C_{II}$ given via (\ref{eqn:defCICII}), are independent of $t$ and $x$. Then, for small $\delta>0$, $C_I$ and $C_{II}$ are of order $1/\delta$ and $1/\sqrt{\delta}$ respectively. Then we have rapid divergence of $L_{x,t} \rightarrow \infty$ for $\delta \rightarrow 0$ due to $\boundkerinteg = \tfrac{36E}{\delta^2}$ and the $\cosh$ term. As noted in Remark \ref{rm:bonddiscuss}, these bounds are enormous even for small parameters and can therefore not be used to measure quantitative behavior of the solutions. \newline
To illustrate that the slope of the displacement here is visible well below the theoretically conservative estimates, we plotted the numerical solution for the problem given by (\ref{eqn:defker2d}), (\ref{eqn:problset}) in Figure \ref{fig:displconst}. Note the emergence of what appears to be a sharp discontinuity where the input data $\b{b}$ is discontinuous, reflected in the numerical solution by the clean separation of the boundary layers on the top and bottom right on which we impose the external body forces, emphasizing that the results in Proposition \ref{prop:contcontsol} truly only apply locally at points where the input data was sufficiently regular. Further, the material separates in the middle as expected from the symmetrical loading, however the displacement along the center line does not jump once over a middle crack leaving a set of two halves separating, but is instead smoothed across, reflected in the numerical solution rendered here by elements $\Omega_i$ filling the space in between. This is consistent with the continuity implied by Proposition \ref{prop:contcontsol}.\par

\begin{figure}[!ht]
    \centering
    \includegraphics[width=0.49\linewidth, trim=0mm 10mm 10mm 10mm, clip]{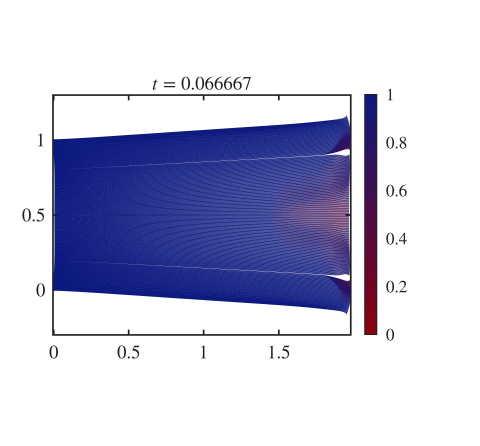}
    \includegraphics[width=0.49\linewidth, trim=0mm 10mm 10mm 10mm, clip]{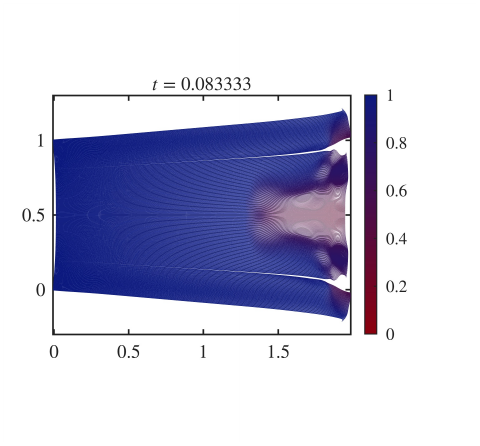}
    \includegraphics[width=0.55\linewidth, trim=0mm 10mm 10mm 10mm, clip]{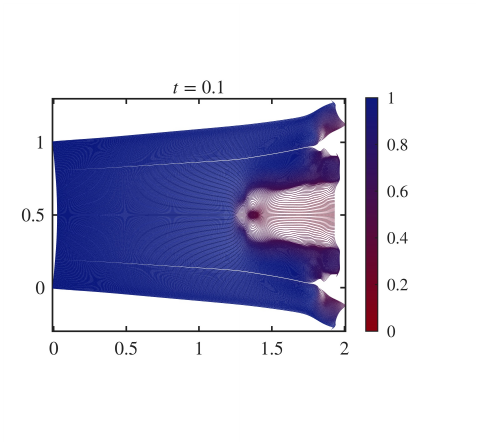}
    \caption{The slope of the displacement in the presence of large damage is visualized using a numerical solution by choosing a large horizon with a small Young's modulus, as such choices decrease the theoretical bounds (\ref{eqn:bnd2dexpl}). The elements $\Omega_i$ making up the reference configuration $\Omega$ are then drawn displaced by $\vec{\b{u}}^\kappa_i$ for different time steps, where $\vec{\b{u}}^\kappa$ denotes the solution to the problem described in (\ref{eqn:defker2d}), (\ref{eqn:problset}). The color of the elements indicates the damage present in the element as given by the discrete version of the integrity function (\ref{eqn:damagefuncdiscr}).}
    \label{fig:displconst}
\end{figure}

\FloatBarrier

\subsection{About the Convergence} \label{sect:numexconv}
We complement the convergence results of Theorem \ref{theo:fullconv} with a numerical study of the two dimensional problem already introduced above in Section \ref{sect:numex} with stronger body forces to increase damage, $f=150$. Note that as we vary the horizon $\delta$ below, we always update $S^+_1=\sqrt{\tfrac{5\pi G_0}{9E\delta}}$ accordingly.  We further chose the external body forces to scale with $\delta$ to keep the overall force times area load the same over all runs with different horizons, 
\begin{align}\label{eqn:problsetscaled}
    \b{b}((x,y)^\top,t) = \begin{cases}
        (0, (5\delta)^{-1} f x/2)^\top & y\in [1-\delta, 1] \\
        -(0,(5\delta)^{-1} f x/2)^\top & y \in [0, \delta] \\
          0        & \text{else.}
    \end{cases}
\end{align}
Throughout, we write
\begin{equation*}    
  m := \delta/\kappa
\end{equation*}
for the number of cells resolving the radius of the horizon. Since no exact solution is available, we compare successive iterates. For $\kappa_{n}>\kappa_{n+1}$ we evaluate
\begin{equation*}
  d_n^t := \big\|\Kkf{n}\vec{\b{u}}^{\kappa_n}(t)-\Kkf{n+1}\vec{\b{u}}^{\kappa_{n+1}}(t) \big\|_{L^p(\Omega)^d},  
\end{equation*}
for $p \in \{2, \infty\}$ using solutions of the fully discrete equations (\ref{eqn:discrprob}) for large $N=16000$, neglecting the error due to the discretization with respect to time. \newline
Assuming $\|u^{\kappa}-u\|\le C\kappa^{q}$, the triangle inequality gives $d_n\le C(\kappa_n^q+\kappa_{n+1}^q)$, so, for a geometric sequence $\kappa_{n+1}=\kappa_n/2$ the right hand side is $C(1+2^{-q})\kappa_n^{q}$. Using this, we report the experimental order of convergence using
\begin{equation}\label{eqn:eoc}
  \EOC_n^t := \frac{\log(d_{n}^t/d_{n+1}^t)}{\log(\kappa_{n}/\kappa_{n+1})},
\end{equation}
which recovers the order $q$ if the error $d_n$ behaves like $C'\kappa^q$.  \newline
We evaluate the experimental order of convergence at three distinct points in time, $t_1=T/3$, $t_2=2T/3$, $t_3=T$. 

\begin{table}[htbp]
  \centering
  \begin{tabular}{@{}l c c c c c @{}}
    \toprule
    \multicolumn{6}{@{}l}{$p=\infty$}\\
    $\delta$ & $\kappa^{-1}$ & $m$ & $t_1$ & $t_2$ & $t_3$\\
    \midrule
    $1/5$  & $40,\dots,320$ & $8$--$64$ & (0.768, 1.129) & (0.751, 0.896) & (0.887, 1.708)\\
    $1/10$ & $40,\dots,320$ & $4$--$32$ & (0.738, 0.944) & (1.120, 1.569) & (0.666, 0.008)\\
    $1/20$ & $80,\dots,640$ & $4$--$32$ & (0.685, 0.262) & (0.568, 1.082) & (0.656,-0.120)\\
    \bottomrule
    \noalign{\vspace{0.5cm}}
    \toprule
    \multicolumn{6}{@{}l}{$p=2$}\\
    $\delta$ & $\kappa^{-1}$ & $m$ & $t_1$ & $t_2$ & $t_3$\\
    \midrule
    $1/5$  & $40,\dots,320$ & $8$--$64$ & (0.998, 1.002) & (1.083, 1.003) & (1.249, 1.027)\\
    $1/10$ & $40,\dots,320$ & $4$--$32$ & (1.025, 1.002) & (1.535, 1.118) & (1.460, 1.218)\\
    $1/20$ & $80,\dots,640$ & $4$--$32$ & (1.079, 1.034) & (1.170, 1.152) & (1.891, 0.829)\\
    \bottomrule
  \end{tabular}
  \caption{Experimental order of convergence in the $L^\infty(\Omega)^d$ and $L^2(\Omega)^d$ norm, given in pairs $(\EOC_0^{t_k},\EOC_1^{t_k})$ using the formula (\ref{eqn:eoc}), evaluated at three different time points $t_1=T/3, t_2=2T/3, t_3=T=0.1$ and for different horizons $\delta$.}
  \label{tab:eoc}
\end{table}

\begin{figure}[!ht]
    \centering
    \includegraphics[width=\textwidth]{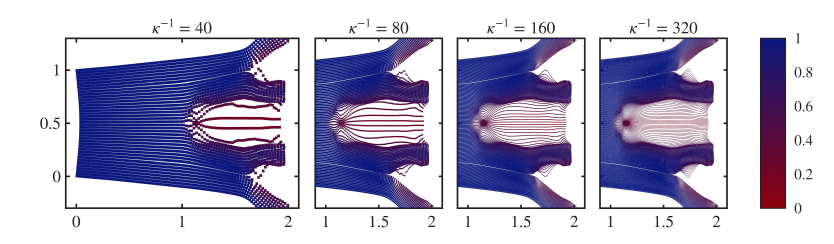}
    \includegraphics[width=\textwidth]{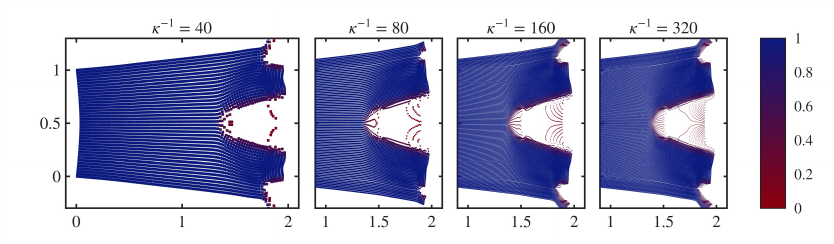}
    \includegraphics[width=\textwidth]{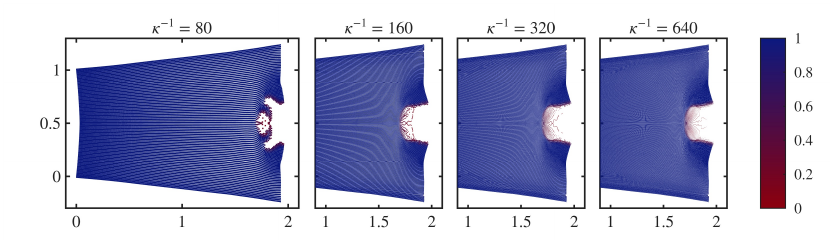}
    \caption{Plots of the visualized displacement for the different horizons and increasing spatial resolutions at time $t=T=0.1$. The horizon used in the top row is $\delta=1/5$, in the middle $\delta=1/10$, and on the bottom row $\delta=1/20$. The higher resolution plots have been cropped to allow better visual comparison of the area with damage. These plots in particular illustrate how the $L^\infty(\Omega)^2$ norm, unlike the $L^2(\Omega)^2$ norm, fails to capture the fact that the damage region itself remains fairly stable and the error localizes with increasing resolution. This explains the gap between the $p=2$ and $p=\infty$ rates in Table \ref{tab:eoc}. As expected, the $\delta = 1/20$ run required higher resolutions to reach the final damage shape as can be seen by comparing the lower left low spatial resolution plot with $\kappa^{-1}=80$ to the lower right high resolution plot with $\kappa^{-1}=640$. Once again, the elements filling in the gap produced by the crack as the resolution is increased are consistent with the continuity of the continuous solution as described by Proposition \ref{prop:contcontsol}. }
    \label{fig:convvisual}
\end{figure}

Table \ref{tab:eoc} for $p=\infty$ is consistent with what the constants derived in (\ref{eqn:bnd2dexpl}) suggest, the necessary resolution of the horizon $\delta$ given by $m$, required to capture the slope of the continuous solution in the discrete setting well enough to reach the asymptotic linear rate $O(\kappa +1/N)$ as proven in Theorem \ref{theo:fullconv}, increases with shrinking $\delta$ in the presence of damage. This is suggested by the declining $\EOC$ values at $t=t_3$ as $\delta$ shrinks ($\EOC_0$ for $\delta=1/5$ vs $\EOC_1$ for $\delta=1/10, 1/20$; different $\EOC$ indices were chosen to keep $m$ constant). We note that those numbers do not contradict the theorems, they merely suggest that the scheme is still pre-asymptotic. As such, the shape of the damage distribution might still be dependent on $\kappa$ or cracks might still appear as discontinuous with elements along the crack border showing spurious behavior. \newline
The table for $p=2$ reveals that in line with Remark \ref{rm:abouttheassumptionspt2}, using norms that factor in the volume on which an error occurs yield much better values for the $\EOC$. This indicates that most errors driving up the $L^\infty(\Omega)^d$ norm localize on small volumes. To make this visible, the solutions are rendered in Figure \ref{fig:convvisual}. 

The overall results still suggest that while the scheme is convergent, the model requires very large resolutions to resolve the displacement near damage in the $L^\infty$-norm for this example. Due to performance issues inherently implied by choosing large $m$, this makes adaptive schemes naturally relevant.

\FloatBarrier
\section{Homogeneous Dirichlet Volume Constraint}
\label{sec:HomogDirich}

To impose volume constraints on the equations, we chose a naive method here due to its simplicity. This method simply sets the displacement to $0$ on a set $\Omega_I$ adjacent but disjoint with $\Omega$. We note that issues with this approach include artificial boundary effects near the interface between $\Omega$ and $\Omega_I$. For details, see \cite{PrudTreatBnd, Le2018, novelBCinPD}. Alternative approaches are given, e.g. \cite{zhaoGenFNM, novelBCinPD}.

For the analysis, we will then rewrite the external body forces as a  force term $\b{b}^D$ equal to the force the constrained portion of $\Omega\cup \Omega_I$ exerts on $\Omega$. This is done to avoid the complications of inner and outer approximations caused by the $L^\infty(\Omega)^d$ norm. First, denote $\Omega_I$ as the open and bounded set disjoint to $\Omega$ on which we wish to impose the volume constraints. We then define $\b{b}^D: C^0([-T, 0], L^\infty(\Omega)^d) \rightarrow L^\infty(\Omega)^d$ as
\begin{align*}
    \b{b}^D(\b{u})(x) :=\int_{\Omega_I} \omega_\delta(\|x'-x\|_2)\scalarlaw(\mathcal{S}(\hat{\b{u}}(0),x',x))\mu\Big(\esssup_{-T\leq s\leq 0}\mathcal{S}(\hat{\b{u}}(s),x',x)\Big)e(\hat{\b{u}}(0),x',x) \dd x',
\end{align*}
where the hat is an extension operator,
\begin{align*}
    \textasciicircum: L^\infty(\Omega)^d \rightarrow L^\infty(\Omega\cup\Omega_I)^d, u \mapsto \hat{u} := \begin{cases}
        u(x)  & x\in \Omega \\
        0 & \text{else},
    \end{cases} 
\end{align*}
where we naturally extend the operators $\ca{S},e$ to include terms from $\Omega_I$. Note that the definition of $\b{b}^D(\b{u})$ allows the bonds crossing the interface between $\Omega_I$ and $\Omega$ to break. \par
If $\dist(x, \Omega_I) >\delta$, then $\b{b}^D(\b{u}_t)(x) = 0$ for all $\b{u} \in C^0([0,T], L^\infty(\Omega)^d)$ and $t \in [0,T]$. \par

This force term admits a natural discretization. First, we begin by defining $\hat{I}^\kappa$ as the set of indices ${{i}}$ with ${i}\in \mathbb{Z}^d$ such that $\hat{V}_{{i}}:=\vol(B_{{i}} \cap \Omega_I)\not=0$. 
As the displacement may occupy different values on $B_j \cap \Omega$ and $B_j \cap \Omega_I$ for the same index $j$, we define the total index set for the discrete displacement $I^\kappa \dot{\cup} \hat{I}^\kappa$ as the disjoint union of $I^\kappa$ and $\hat{I}^\kappa$. This is a set constructed to contain both $I^\kappa$ and $\hat{I}^\kappa$ as disjoint subsets. As a consequence, indices $j\in\mathbb{Z}^d$ with $\vol(\Omega \cap B_j)\not=0$ as well as $\vol(\Omega_I \cap B_j)\not= 0$ are in $I^\kappa \dot{\cup} \hat{I}^\kappa$ twice, once as an element of $I^\kappa$, written without a hat as usual, and once as the copy stemming from $j \in \hat{I}^\kappa$ denoted by $\hat{j} \in I^\kappa \dot\cup \hat{I}^\kappa$, the hat distinguishing between the instances. In particular for $j\in \hat{I}^\kappa\cup I^\kappa$, $j\not=\hat{j}$. In this sense, $\hat{}
:\hat{I}^\kappa\cap I^\kappa \rightarrow I^\kappa \dot \cup \hat{I}^\kappa$ can be understood as a mapping of an index in the intersection onto its copy in $I^\kappa \dot\cup \hat{I}^\kappa$ stemming from $\hat{I}^\kappa$. We then extend the discrete displacement $\vec{\b{u}}(t) \in (\mathbb{R}^d)^{I^\kappa}$ to $\vec{\hat{\b{u}}}(t) \in (\mathbb{R}^d)^{I^\kappa \dot{\cup} \hat{I}^\kappa}$ with $\vec{\hat{\b{u}}}_{\hat{j}}(t):= 0$ for all ${j}\in \hat{I}^\kappa$. This duplication naturally reflects the two different values the displacement might take, specifically, $\vec{\b{u}}_{j}^\kappa$ and $\vec{\b{u}}_{\hat{j}}^\kappa$. Note that the individual sets $I^\kappa$ and $\hat{I}^\kappa$ are left unaffected and are still subsets of $\mathbb{Z}^d$.\par
To construct the discretization, we define $\vec{\b{b}}^D_\kappa: \Linftyptw([-T,0],  (\mathbb{R}^d)^{I^\kappa}) \rightarrow (\mathbb{R}^d)^{I^\kappa}$ using
\begin{equation*}
    (\vec{\b{b}}^D_\kappa)_i(\vec{\b{u}}) := \sum_{\substack{{j} \in \hat{I}^\kappa \\ {j} \not= i}} \hat{V}_{{j}}\omega_\delta(\|x_{i}- x_{{j}}\|_2)\scalarlaw(\mathcal{S}^{i\hat{j}}(\vec{\hat{\b{u}}}(0)))\mu\Big(\sup_{-T\leq s\leq 0}\mathcal{S}^{i\hat{j}}(\vec{\hat{\b{u}}}(s))\Big)e^{i\hat{j}}(\vec{\hat{\b{u}}}(0)).
\end{equation*}
We also extend the discrete bond-stretch and direction of motion functions $\ca{S}^{i\hat{j}}$, $e^{i\hat{j}}$ to take indices $\hat{j}$, for $j \in \hat{I}^\kappa$.\par

We note here that $\hat{V}_{{j}} \not = V_j$ for $j\in I^\kappa \cap \hat{I}^\kappa$. This is such that elements $B_i$ intersecting both $\Omega_I$ and $\Omega$ contribute to the equations in (\ref{eqn:semidiscrprobldirich}) twice, once as $V_j$ in $F$ and once as $\hat{V}_{{j}}$ in $\vec{\b{b}}^D_\kappa$. With $\hat{\Omega}_i := \Omega_I \cap B_i$, we then have $\hat{\Omega}_i \dot{\cup} \Omega_i = B_i \cap (\Omega_I \dot\cup \Omega)$, implying $\hat{V}_i+V_i = \vol(B_i \cap (\Omega_I\dot\cup \Omega))$. 

The continuous equations (\ref{eqn:conteq}) then become
\begin{equation}
\label{eqn:conteqdirich}
    \begin{rcases}
        \rho\ddot{\b{u}}(t) = F(\b{u}_t) + \b{b}(t) + \b{b}^D(\b{u}_t)\\ 
        \b{u}(0) = u_0 \\
        \dot{\b{u}}(0) = v_0
    \end{rcases}.
\end{equation}
Similarly, the semi-discrete equations (\ref{eqn:semdiscrprobl}) become
\begin{equation}
\label{eqn:semidiscrprobldirich}
    \begin{rcases}
        \tfrac{d^2}{dt^2}\vec{\b{u}}^\kappa(t) = \vec{F}_\kappa((\vec{\b{u}}^\kappa)_t) + \vec{\b{b}}_\kappa(t) + \vec{\b{b}}^D_\kappa((\vec{\b{u}}^\kappa)_t) \\
        \ \ \vec{\b{u}}^\kappa(0) = \vec{u}_0^\kappa\\
        \tfrac{d}{dt}\vec{\b{u}}^\kappa(0) = \vec{v}_0^\kappa
    \end{rcases}.
\end{equation}
The fully discrete system (\ref{eqn:discrprob}) then naturally becomes
\begin{equation}
\label{eqn:discrprobdirich}
    \begin{aligned}
        \vec{\b{u}}^{-1,N}_\kappa& = \vec{u}_{0}^\kappa- h\vec{v}_{0}^\kappa + \tfrac{1}{2}h^2(\vec{F}_\kappa(\IP_1(\vec{\b{u}}^{0,N}_\kappa))+ \vec{\b{b}}_\kappa(0)+\vec{\b{b}}^D_\kappa(\IP_1(\vec{\b{u}}^{0,N}_\kappa))) \\
        \vec{\b{u}}^{0,N}_\kappa& = \vec{u}_{0}^\kappa \\
        \vec{{\b{u}}}^{n,N}_\kappa &= 2\vec{\b{u}}^{n-1,N}_\kappa - \vec{\b{u}}^{n-2,N}_\kappa +h^2(\vec{F}_\kappa(\IP_{n}(\vec{\b{u}}^{...n-1,N}_\kappa))+ \vec{\b{b}}_\kappa(t_{n-1})+  \vec{\b{b}}^D_\kappa(\IP_{n}(\vec{\b{u}}^{...n-1,N}_\kappa))).
    \end{aligned}
\end{equation}

 To obtain the same convergence results as without the boundary conditions, we now verify the relevant properties of this new force term. Since it is  structurally derived from $F$, it is no surprise that similar results follow for it and the proofs carry over. 
 
\begin{proposition}[Lipschitz Bounds for the Dirichlet Force]
    For $\b{u},\b{v} \in C^0([-T, 0], L^\infty(\Omega)^d)$
    \begin{equation*}
        \|\b{b}^D(\b{u}) - \b{b}^D(\b{v})\|_{L^\infty(\Omega)^d} \leq L_D \|\b{u}-\b{v}\|_{C^0([-T,0], L^\infty(\Omega)^d)}.
    \end{equation*}
Similarly, for $\vec{\b{u}},\vec{\b{v}} \in C^0([-T, 0], (\mathbb{R}^d)^{I^\kappa})$,
\begin{equation*}
    \|\vec{\b{b}}^D_\kappa(\vec{\b{u}}) - \vec{\b{b}}^D_\kappa(\vec{\b{v}})\|_{\infty} \leq L_{{D}} \|\vec{\b{u}}-\vec{\b{v}}\|_{C^0([-T,0], (\mathbb{R}^d)^{I^\kappa})},
\end{equation*}
for some $L_D>0$ independent of $0 < \kappa \leq \delta/(2\sqrt{d})$.
\end{proposition}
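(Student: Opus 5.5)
The plan is to mimic the proofs of \cite{WP_FRAC}, Lemma 1 and of Proposition \ref{prop:LScontdiscrrhs}, treating $\b{b}^D$ as the peridynamic operator $F$ restricted to bonds $x'-x$ with $x\in\Omega$, $x'\in\Omega_I$. The key simplification is that $\hat{\b{u}}(s)(x')=\hat{\b{v}}(s)(x')=0$ for $x'\in\Omega_I$. Consequently, for every such bond the differences of the bond stretches and directions only involve $\b{u}(s)(x)-\b{v}(s)(x)$. Thus
\begin{equation*}
|\ca{S}(\hat{\b{u}}(s),x',x)-\ca{S}(\hat{\b{v}}(s),x',x)|\leq \tfrac{\sqrt{d}}{\|x'-x\|_2}\|\b{u}(s)(x)-\b{v}(s)(x)\|_\infty ,
\end{equation*}
and the analogue of (\ref{eqn:bndonsemdiscre}) holds with the factor $\tfrac{2\sqrt{d}}{(\ca{S}(\hat{\b{u}}(0),x',x)+1)\|x'-x\|_2}$. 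For the essential supremum I use the jointly measurable representative $\Gamma$ of Remark \ref{rm:jointmesrepr}, extended by zero to $\Omega_I$. As in \cite{WP_FRAC}, this gives $|\esssup_s \ca{S}(\cdot)-\esssup_s\ca{S}(\cdot)|\leq \esssup_s|\ca{S}(\cdot)-\ca{S}(\cdot)|$ for a.e.\ $(x',x)$, and the exchange of essential suprema yields the bound by $\|\b{u}-\b{v}\|_{C^0([-T,0],L^\infty(\Omega)^d)}$.

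Next I handle sub-critical and degenerate bonds with the same device used before. I replace $e(\hat{\b{v}}(0),x',x)$ by $e(\hat{\b{u}}(0),x',x)$ whenever $\ca{S}(\hat{\b{v}}(0),x',x)<S^-_1$; this does not change $\b{b}^D(\b{v})$ because $\scalarlaw$ vanishes there. The difference $\b{b}^D(\b{u})(x)-\b{b}^D(\b{v})(x)$ then splits into three terms: one from $\scalarlaw$, one from $\mu$, and one from $e$. Using the Lipschitz constant $\ell$, the bound $\scalarlaw^+$ and $\ca{S}+1\geq S^-_1+1$ on the active set, each term is bounded by a constant times
\begin{equation*}
\int_{\Omega_I}\frac{\omega_\delta(\|x'-x\|_2)}{\|x'-x\|_2}\dd x'\;\|\b{u}-\b{v}\|_{C^0([-T,0],L^\infty(\Omega)^d)}\leq \boundkerinteg\,\|\b{u}-\b{v}\|_{C^0([-T,0],L^\infty(\Omega)^d)} .
\end{equation*}
This gives $L_D=2\sqrt{d}(\ell+\ell\scalarlaw^++\tfrac{2\scalarlaw^+}{S^-_1+1})\boundkerinteg$.

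The discrete statement is proved in exactly the same way. Since $\vec{\hat{\b{u}}}_{\hat j}=\vec{\hat{\b{v}}}_{\hat j}=0$, only the $i$-component of the difference enters. What remains is a uniform bound
\begin{equation*}
\sum_{j\in\hat I^\kappa, j\neq i}\hat V_j\,\frac{\omega_\delta(\|x_j-x_i\|_2)}{\|x_j-x_i\|_2}\leq \boundkerdiscrinteg .
\end{equation*}
I expect this to be the one step needing care, because $\hat V_j$ and $V_j$ may refer to the same cube $B_j$. I would obtain it by repeating the proof of Lemma \ref{lemm:unifbndinteg}: use $\hat V_j\leq\kappa^d=\vol(B_j)$, the estimate (\ref{eqn:bndnorm}) and the monotonicity of $\omega_\delta$ to compare the sum with $(\sqrt{d}+1)^d\ca{L}$ over all cubes $B_j$, $j\neq i$. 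This comparison does not depend on $\Omega$, so the constant is independent of $\kappa\leq\delta/(2\sqrt d)$.

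The term $j=i$ is excluded, and this requires no special treatment. Finally, taking the larger of the continuous and discrete constants gives a common $L_D$ for both estimates.
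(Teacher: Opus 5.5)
Your proposal is correct and takes the same route as the paper, whose proof only states that the argument is structurally identical to Proposition~\ref{prop:LScontdiscrrhs} and \cite{WP_FRAC}, Lemma~1. Your explicit uniform bound on $\sum_{j\in\hat I^\kappa,\, j\neq i}\hat V_j\,\omega_\delta(\|x_j-x_i\|_2)/\|x_j-x_i\|_2$, obtained from $\hat V_j\leq\kappa^d$ and the argument of Lemma~\ref{lemm:unifbndinteg}, supplies a detail that the paper leaves implicit.
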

\begin{proof}
These proofs are structurally identical to Proposition \ref{prop:LScontdiscrrhs} and \cite{WP_FRAC}, Lemma 1. 
\end{proof}
Both of these properties imply the well-posedness of (\ref{eqn:conteqdirich}) and (\ref{eqn:semidiscrprobldirich}). Note that the well-posedness of (\ref{eqn:conteqdirich}) follows already from \cite{WP_FRAC}. We will however need this bound explicitly later.

\begin{proposition}[Convergence of the Discretization of the Dirichlet Force]\newline
Let $\b{u}\in C^0([0,T], L^\infty(\Omega)^d)$ and $\vec{\b{u}} \in C^0([0,T], (\mathbb{R}^d)^{I^\kappa})$, then, for all $t\in [0,T]$,
\begin{equation*}
    \|\b{b}^D(\b{u}_t) - \Kk \vec{\b{b}}^D_\kappa((\vec{\b{u}})_t) \|_{L^\infty(\Omega)^d} \leq \beta^D \Big(\max_{0\leq s \leq t}\|\b{u}(s) - \Kk\vec{\b{u}}(s)\|_{L^\infty(\Omega)^d}\Big) + \alpha^D_\kappa
\end{equation*}
for some $\alpha_\kappa^D, \beta^D \geq 0$, and \begin{equation*}
    O(\kappa) \ni \alpha_\kappa^D \xrightarrow[]{\kappa \searrow 0} 0,\ \ \ \beta^D < \infty
\end{equation*}
for $\kappa \searrow 0$ and $\beta^D$ independent of $0<\kappa\leq \delta/(2\sqrt{d})$.
\end{proposition}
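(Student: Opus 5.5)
The plan is to repeat the proof of Proposition \ref{prop:discrrhserr} almost verbatim, with $\Omega$ replaced by $\Omega_I$ as the domain of the $x'$-integration. The discrete index set $I^\kappa$ in the $j$-sum is replaced by $\hat{I}^\kappa$, and the volumes $V_j$ by $\hat{V}_j$.

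Fix $i\in I^\kappa$ and $x\in\Omega_i$. First treat degenerate and sub-critical bonds as before: define $e^{i\hat j}_{\b{u}}$, which equals $e^{i\hat j}$ if $\ca{S}^{i\hat j}\geq S^-_1$ and $e(\hat{\b{u}}(t),x',x)$ otherwise. This gives a representation $\Kk\vec{\b{b}}^D_\kappa = \int_{\Omega_I}\sum_{j\in\hat I^\kappa, j\neq i}\chi_{\hat\Omega_j}(x')(\dots)\dd x'$ that is pointwise comparable with the integrand of $\b{b}^D(\b{u}_t)(x)$. The continuous integral over $\Omega_I$ then splits into the part over $\hat\Omega_i$ and the parts over $\hat\Omega_j$, $j\neq i$. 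The self-cell part $\hat\Omega_i\subset B_{\sqrt d\kappa}(x)$ is bounded by $\scalarlaw^+\|\omega_\delta(\|\cdot\|_2)\|_{L^1(B_{\sqrt d\kappa}(0))}\leq \scalarlaw^+\sqrt d\kappa\boundkerinteg$ by Lemma \ref{lemm:shellbounds}, claim 3. This term is $O(\kappa)$ and becomes part of $\alpha^D_\kappa$.

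For $j\neq i$, split the difference of integrands telescopically into four terms, exactly as in the proof of Proposition \ref{prop:discrrhserr}.
\begin{enumerate}
\item The kernel difference $\omega_\delta(\|x_j-x_i\|_2)-\omega_\delta(\|x'-x\|_2)$ is bounded, after summing and integrating over the $\hat\Omega_j$, by $D_\kappa$. Lemma \ref{lemm:discrweighterrbnd} applies because its proof only integrates over $\mathbb{R}^d\setminus B_i$ and never uses $x'\in\Omega$.
\item The $\scalarlaw$ difference is handled by (\ref{eqn:BndResultscalarlaw}).
\item The $e$ difference is handled by (\ref{eqn:bndresesem}).
\item The $\mu$ difference is handled by (\ref{eqn:BndResultMu}).
\end{enumerate}
The auxiliary bounds of Lemma \ref{lemm:auxbnd} are applied to the extended displacements $\hat{\b{u}}$ and $\vec{\hat{\b{u}}}$. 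The key observation is that on $\hat\Omega_j$ both extensions vanish identically, so $\|\hat{\b{u}}(s)(x')-\vec{\hat{\b{u}}}_{\hat j}(s)\|=0$. At the base point $x\in\Omega_i$ the difference is $\|\b{u}(s)(x)-(\Kk\vec{\b{u}}(s))(x)\|$. Hence every occurrence of $2\|\Kk\vec u-u\|$ in Lemma \ref{lemm:auxbnd} is bounded by $2\max_{0\leq s\leq t}\|\b{u}(s)-\Kk\vec{\b{u}}(s)\|_{L^\infty(\Omega)^d}$. The geometric term $\|x_j-x_i-x'+x\|_\infty\leq\kappa$ contributes to $\alpha^D_\kappa$.

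After these estimates, each remaining term carries the factor $\omega_\delta(\|x'-x\|_2)/\|x'-x\|_2$, integrated over $\Omega_I\subset\mathbb{R}^d$. By (\ref{eqn:boundcond}) this integral is bounded by $\boundkerinteg$ independently of $x$. This yields $\beta^D$ with the same constant as $\beta$ in Proposition \ref{prop:discrrhserr}, and $\alpha^D_\kappa=\scalarlaw^+D_\kappa+\scalarlaw^+\sqrt d\kappa\boundkerinteg+C\kappa\in O(\kappa)$, uniformly for $\kappa\leq\delta/(2\sqrt d)$.

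The main obstacle is the bookkeeping of the duplicated indices: cells with $j\in I^\kappa\cap\hat I^\kappa$, and especially $\hat j$ with $j=i$, where $B_i$ meets both $\Omega$ and $\Omega_I$. The sum in $\vec{\b{b}}^D_\kappa$ excludes $j=i$, so the continuous contribution from $\hat\Omega_i$ has no discrete counterpart. It must therefore be absorbed by the self-cell estimate above, and it is important to check that this estimate only uses $\hat\Omega_i\subset B_i$. A second point is that the bound (\ref{eqn:bndonSdiff}) needs $\|x'-x\|_2\leq(\sqrt d+1)\|x_j-x_i\|_2$ for $j\neq i$. This holds because the cells $B_j$ form the same lattice, so the argument of Lemma \ref{lemm:unifbndinteg} carries over unchanged.
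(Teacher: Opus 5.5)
Your proposal is correct and follows the same route as the paper. The paper only states that the proof is structurally identical to that of Proposition~\ref{prop:discrrhserr}, and your transfer supplies the omitted details: you absorb the self-cell $\hat\Omega_i$ into $\alpha^D_\kappa$ via Lemma~\ref{lemm:shellbounds}, use that $\hat{\b{u}}$ and $\vec{\hat{\b{u}}}$ both vanish on $\Omega_I$, and reuse Lemma~\ref{lemm:auxbnd}. Your argument also relies implicitly on Lemma~\ref{lemm:unifbndinteg}, invoked inside the proof of Lemma~\ref{lemm:discrweighterrbnd}, remaining valid with $\hat V_j$ and $\hat\Omega_j$ in place of $V_j$ and $\Omega_j$; this does hold, because that proof only uses $\hat\Omega_j\subset B_j$ and $x\in B_i$.
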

\begin{proof} The proof is structurally the same as Proposition \ref{prop:discrrhserr}.
\end{proof}

From this we obtain the convergence of the solutions of the semi-discrete schemes (\ref{eqn:semidiscrprobldirich}) to the solution of (\ref{eqn:conteqdirich}) just like we did in Theorem \ref{theo:convsemidiscr}. \par

\begin{corollary}[Convergence of the Semi-Discrete Systems with Homogeneous Dirichlet Boundary Conditions] Let Assumptions \ref{ass:numconv} hold and $\Omega_I$ be an open and bounded set disjoint with $\Omega$. Let $\kappa_n \searrow 0$ and $\vec{\b{u}}^{\kappa_n}\in C^2([0,T], (\mathbb{R}^d)^{I^{\kappa_n}})$ be the solutions to the semi-discrete problem (\ref{eqn:semidiscrprobldirich}), as well as $\b{u}\in C^2([0,T], L^\infty(\Omega)^d)$ the solution to the continuous problem (\ref{eqn:conteqdirich}), then
    \begin{equation*}
        \|\b{u} - \Kkf{n}\vec{\b{u}}^{\kappa_n}\|_{C^1([0,T], L^\infty(\Omega)^d)} \xrightarrow[]{n \rightarrow \infty} 0. 
    \end{equation*}
    If further the rates in (\ref{eqn:asssemidiscrconvinputdata}) hold, we have
    \begin{equation}
        \|\b{u} - \Kkf{n}\vec{\b{u}}^{\kappa_n}\|_{C^1([0,T], L^\infty(\Omega)^d)} \in O(\kappa)
    \end{equation}
    \qedwhite

\end{corollary}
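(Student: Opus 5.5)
The plan is to repeat the proof of Theorem \ref{theo:convsemidiscr} verbatim, replacing the operator $F$ by the augmented operator $F+\b{b}^D$ and $\vec{F}_\kappa$ by $\vec{F}_\kappa+\vec{\b{b}}^D_\kappa$.

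First, the semi-discrete problem (\ref{eqn:semidiscrprobldirich}) has to be well-posed. Reduce it to first order exactly as in (\ref{eqn:semdiscreqred}), with
\begin{equation*}
\vec{G}^D_\kappa((\vec{\phi},\vec{\psi}),t)=\big(\vec{\psi}(0),\ \vec{F}_\kappa(\vec{\phi})+\vec{\b{b}}^D_\kappa(\vec{\phi})+\vec{\b{b}}_\kappa(t)\big).
\end{equation*}
By Proposition \ref{prop:LScontdiscrrhs} and the Lipschitz bound for the Dirichlet force, $\vec{G}^D_\kappa$ is Lipschitz with constant $\max(1,L_{\vec{F}}+L_D)$, uniformly in $\kappa$. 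The Picard argument of Proposition \ref{prop:WPsemicont} then gives a unique solution in $C^1([0,T],(\mathbb{R}^d)^{I^\kappa}\times(\mathbb{R}^d)^{I^\kappa})$, hence $\vec{\b{u}}^\kappa\in C^2$. The continuous problem (\ref{eqn:conteqdirich}) is well-posed by \cite{WP_FRAC}, or by the same argument using $L_D$.

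Next, transport the semi-discrete solution to $L^\infty(\Omega)^{2d}$ via Lemma \ref{lemm:transport}, as in the proof of Theorem \ref{theo:convsemidiscr}. Write the error of the reduced variables as the initial-data error plus the time integral of the difference of right-hand sides. The second component of that integrand now splits into three parts:
\begin{equation*}
F(\b{u}_s)-\Kk\vec{F}_\kappa((\vec{\b{u}}^\kappa)_s),\qquad
\b{b}(s)-\Kk\vec{\b{b}}_\kappa(s),\qquad
\b{b}^D(\b{u}_s)-\Kk\vec{\b{b}}^D_\kappa((\vec{\b{u}}^\kappa)_s).
\end{equation*}
The first is bounded by Proposition \ref{prop:discrrhserr}, the third by the convergence proposition for the discretized Dirichlet force, and the second by Assumptions \ref{ass:numconv}. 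Together they give
\begin{equation*}
E(t)\le \alpha_2^{\kappa}+t(\alpha_\kappa+\alpha^D_\kappa)+\int_0^t \max(1,\beta+\beta^D)\max_{0\le\tau\le s}E(\tau)\,\mathrm{d}s.
\end{equation*}
Passing to $\Delta(t)=\max_{s\le t}E(s)$ and applying Gr\"onwall yields $\Delta(T)\le A^\kappa(T)e^{TB}$ with $B=\max(1,\beta+\beta^D)$. Since $A^\kappa\to0$, this proves convergence. Under (\ref{eqn:asssemidiscrconvinputdata}), $\alpha_2^\kappa$, $\alpha_\kappa$ and $\alpha_\kappa^D$ are all $O(\kappa)$, which gives the rate.

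The only point needing care is that the Dirichlet-force propositions are stated with proofs referred to as structurally identical. Their use here needs the constants $L_D$, $\beta^D$ and the rate of $\alpha^D_\kappa$ to be uniform in $\kappa$ for $\kappa<\delta/(2\sqrt{d})$, which those propositions assert. I would check that Lemmas \ref{lemm:unifbndinteg} and \ref{lemm:discrweighterrbnd} still apply when the sum runs over cells $\hat{\Omega}_j$ with weights $\hat V_j$. They do: those bounds only use $\|x'-x\|_2\le(\sqrt d+1)\|x_j-x_i\|_2$ and monotonicity of $\omega_\delta$, both independent of whether the cell lies in $\Omega$ or $\Omega_I$.

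One further subtlety is the term $j=i$ with $i\in I^\kappa\cap\hat I^\kappa$, which the discrete sum excludes. As with the diagonal cell in Proposition \ref{prop:discrrhserr}, its continuous contribution is bounded by $\scalarlaw^+\|\omega_\delta\|_{L^1(B_{\sqrt d\kappa})}\in O(\kappa)$ via Lemma \ref{lemm:shellbounds}, so it is absorbed into $\alpha^D_\kappa$.
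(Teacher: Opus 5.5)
Your proposal is correct and follows the paper's route exactly. The paper gives no separate proof beyond saying the result follows as in Theorem~\ref{theo:convsemidiscr} once the Lipschitz bound and the discretization-error bound for the Dirichlet force are available, and that is precisely the Gr\"onwall argument you run with $B=\max(1,\beta+\beta^D)$ and the extra consistency term $\alpha^D_\kappa$. Your additional checks fill in details the paper leaves implicit under ``structurally identical'': that Lemmas~\ref{lemm:unifbndinteg} and \ref{lemm:discrweighterrbnd} carry over to the cells $\hat\Omega_j$, and that the excluded diagonal cell $\hat\Omega_i$ contributes only $O(\kappa)$.
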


The convergence of the fully discrete scheme as in Theorem \ref{theo:fullconv} then follows again by treating the $\b{b}^D$ terms like a second $F$ term.
First,
${\hat{G}}_h^D: C^0([-T,0], (\mathbb{R}^d)^{I^\kappa} \times (\mathbb{R}^d)^{I^\kappa}) \times [0,T] \rightarrow (\mathbb{R}^d)^{I^\kappa} \times (\mathbb{R}^d)^{I^\kappa} $ with 
\begin{align*}
    {\hat{G}}_h^D((\phi,\psi)^\top, t) = \begin{pmatrix}
    \psi(0)\\
    \vec{F}_\kappa(\tau_{-h}\phi) + \vec{\b{b}}_\kappa^D(\tau_{-h}\phi) + \vec{\b{b}}_\kappa(t)
\end{pmatrix}.
\end{align*}
Let $\vec{\b{u}}_{Red}^\kappa$ be the unique solution to the reduced system of (\ref{eqn:semidiscrprobldirich}). We then define the {consistency residual} $\hat{K}^{N,n}$ as
\begin{align}\label{eqn:defstabresdirich}
    \hat{K}^{N,n}_\kappa := \tfrac{1}{h}( \vec{\b{u}}_{Red}^\kappa(t_{n-1})-\vec{\b{u}}_{Red}^\kappa(t_{n}) )+ {\hat{G}}_h^D((\vec{\b{u}}^\kappa_{Red})_{t_n}, t_{n-1}).
\end{align}

Lemma \ref{lemm:stabineq} then becomes the following.

\begin{lemma}[Stability Inequality with the Dirichlet Term]\label{lemm:stabineqdirich}
    We have for all $N\in\mathbb{N}$, $1\leq n \leq N$ and all $\delta/\sqrt{d}> \kappa >0$, that
    \begin{align*}
        (1-h)\|e_n\|_\infty& \leq h\|\hat{K}^{N,n}_\kappa\|_\infty+ \|e_{n-1}\|_\infty \\
       &\quad + h(L_{\vec{F}}+L_{{D}})\|\IP_{n}((\hat{\b{u}}^{0,...n-1,N}_\kappa)_1)) - ((\vec{\b{u}}_{Red}^\kappa)_1)_{t_{n-1}}\|_{\Linftyptw([-T,0], (\mathbb{R}^d)^{I^\kappa})},
    \end{align*}
    where $\vec{\b{u}}_{Red}^\kappa$ is the unique solution to (\ref{eqn:semidiscrprobldirich}) in reduced form, and $\hat{\b{u}}^{n,N}_\kappa$ again derived from the solutions $\vec{\b{u}}^{n,N}_\kappa$ from (\ref{eqn:discrprobdirich}), just like we did in (\ref{eqn:redfulldiscrsol}).\qedwhite
\end{lemma}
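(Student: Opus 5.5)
The proof is a direct adaptation of the proof of Lemma \ref{lemm:stabineq}, in which the Dirichlet term $\vec{\b{b}}^D_\kappa$ is handled exactly like a second copy of $\vec{F}_\kappa$. First we write the fully discrete scheme (\ref{eqn:discrprobdirich}) in reduced form, analogously to (\ref{eqn:redfulldiscr}), using $\hat{\b{u}}^{n,N}_\kappa$ from (\ref{eqn:redfulldiscrsol}):
\begin{equation*}
0 = \hat{\b{u}}_\kappa^{n-1,N} - \hat{\b{u}}_\kappa^{n,N} + h\begin{pmatrix} (\hat{\b{u}}_\kappa^{n,N})_2 \\ \vec{F}_\kappa(\IP_n((\hat{\b{u}}^{...n-1,N}_\kappa)_1)) + \vec{\b{b}}^D_\kappa(\IP_n((\hat{\b{u}}^{...n-1,N}_\kappa)_1)) + \vec{\b{b}}_\kappa(t_{n-1}) \end{pmatrix}.
\end{equation*}
This equivalence is the same algebraic rearrangement as in the case without boundary conditions. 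The Dirichlet term only adds a summand in the second component, and it is evaluated on the same interpolant $\IP_n$ as $\vec{F}_\kappa$.

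Next we subtract this identity from the definition (\ref{eqn:defstabresdirich}) of $\hat{K}^{N,n}_\kappa$. This requires $\tau_{-h}$ applied to $((\vec{\b{u}}^\kappa_{Red})_1)_{t_n}$ to coincide with $((\vec{\b{u}}^\kappa_{Red})_1)_{t_{n-1}}$ on $[-T,0]$, which is the identity already used implicitly in Lemma \ref{lemm:stabineq}. The body forces $\vec{\b{b}}_\kappa(t_{n-1})$ cancel, and we obtain
\begin{equation*}
h\hat{K}^{N,n}_\kappa = e_{n-1} - e_n - h\begin{pmatrix} (\hat{\b{u}}_\kappa^{n,N})_2 - (\vec{\b{u}}^\kappa_{Red})_2(t_n) \\ \Delta F + \Delta b^D \end{pmatrix}.
\end{equation*}
Here $\Delta F$ is the difference of $\vec{F}_\kappa$ evaluated at $\IP_n((\hat{\b{u}}^{...n-1,N}_\kappa)_1)$ and at $((\vec{\b{u}}^\kappa_{Red})_1)_{t_{n-1}}$, and $\Delta b^D$ is the analogous difference for $\vec{\b{b}}^D_\kappa$.

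We then solve for $e_n$ and take $\|\cdot\|_\infty$. The first component of the bracket is bounded by $\|e_n\|_\infty$. Bringing $h\|e_n\|_\infty$ to the left-hand side produces the factor $(1-h)$.

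For the second component we apply Proposition \ref{prop:LScontdiscrrhs} to $\Delta F$ and the Lipschitz bound for the Dirichlet force to $\Delta b^D$, and add the two estimates. This yields the factor $L_{\vec{F}} + L_D$ in front of the $\Linftyptw$ distance between the interpolant and the shifted semi-discrete solution.

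The only point needing care is that the Lipschitz proposition for $\vec{\b{b}}^D_\kappa$ is stated on $C^0([-T,0],(\mathbb{R}^d)^{I^\kappa})$, whereas $\IP_n$ produces piecewise constant functions in $\Linftyptw$. We will remark that its proof, being structurally identical to that of Proposition \ref{prop:LScontdiscrrhs}, uses only pointwise suprema in time. It therefore holds verbatim on $\Linftyptw([-T,0],(\mathbb{R}^d)^{I^\kappa})$, with the extended displacement fixed to $0$ on the $\hat{j}$ indices for both arguments, so those entries contribute nothing to the difference. With this observation in hand, the remaining steps are routine.
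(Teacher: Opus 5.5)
Your proposal is correct and follows the route the paper intends. The paper omits this proof and refers back to Lemma \ref{lemm:stabineq}: subtract the reduced scheme from the definition of $\hat{K}^{N,n}_\kappa$, absorb the velocity component into the factor $(1-h)$, and bound the force component with the Lipschitz estimates for $\vec{F}_\kappa$ and $\vec{\b{b}}^D_\kappa$. Your two explicit checks are details the paper leaves implicit: that $\tau_{-h}$ turns the history at $t_n$ into the history at $t_{n-1}$ for $1\le n\le N$, and that the Dirichlet Lipschitz bound is needed (and holds) on $\Linftyptw([-T,0],(\mathbb{R}^d)^{I^\kappa})$ because $\IP_n$ is only piecewise constant.
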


The rest of the proof then goes similarly as in Theorem \ref{theo:fullconv}.

\begin{corollary}[Convergence of the Fully Discrete Scheme with Boundary Conditions] \label{corol:fullconvdirich}
    Let Assumptions \ref{ass:numconv} hold. Let $\Omega_I$ be open, bounded and disjoint with $\Omega$. Further, let $(N_k)_{k\in \mathbb{N}}\subset \mathbb{N}$ be such that $N_k \xrightarrow[]{k\rightarrow \infty} \infty$ and $0<\kappa_k \searrow 0$. Denote the unique solution to (\ref{eqn:conteqdirich}) in reduced form as $\b{u}_{Red} \in C^1([0,T], L^\infty(\Omega)^d\times  L^\infty(\Omega)^d)$, and the solution constructed using (\ref{eqn:discrprobdirich}) expressed like in (\ref{eqn:redfulldiscrsol}) as $(\hat{\b{u}}_{\kappa_k}^{n, N_k})_{0\leq n \leq N_k}$. Then, we have
    \begin{equation*}
        \lim_{k\rightarrow \infty} \max_{0\leq n \leq N_k} \| \b{u}_{Red}(t_n^{N_k}) - \Kkf{k}^2\hat{\b{u}}_{\kappa_k}^{n, N_k}\|_{L^\infty(\Omega)^d\times L^\infty(\Omega)^d} = 0.
    \end{equation*}
    Further, if $\b{b}$ is Lipschitz continuous in $t$, as well as 
    \begin{align}
        &\|\Kkf{k}\vec{\b{b}}_{\kappa_k} - \b{b}\|_{C^0([0,T], L^\infty(\Omega)^d)} \in O(\kappa_k),\ \|u_0 - \Kkf{k}\vec{u}_0\|_{L^\infty(\Omega)^d} \in O(\kappa_k),\\
        &\hspace{3.1cm}\|v_0 - \Kkf{k}\vec{v}_0\|_{L^\infty(\Omega)^d} \in O(\kappa_k)
    \end{align}
    then we obtain
    \begin{equation*}
       \max_{0\leq n \leq N_k} \| \b{u}_{Red}(t_n^{N_k}) - \Kkf{k}^2\hat{\b{u}}_{\kappa_k}^{n, N_k}\|_{L^\infty(\Omega)^d\times L^\infty(\Omega)^d} \in O(1/N_k+ \kappa_k).
    \end{equation*} \qedwhite
\end{corollary}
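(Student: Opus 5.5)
The plan is to repeat the proof of Theorem \ref{theo:fullconv} verbatim, with $\vec{F}_\kappa$ replaced throughout by the combined operator $\vec{F}_\kappa + \vec{\b{b}}^D_\kappa$. By the Lipschitz Bounds for the Dirichlet Force, this operator is Lipschitz in $\Linftyptw([-T,0],(\mathbb{R}^d)^{I^\kappa})$ with constant $L_{\vec{F}}+L_D$, uniformly in $\kappa$. The pointwise-in-time version of that bound follows exactly as in Proposition \ref{prop:LScontdiscrrhs}, since only finite suprema over $s$ enter.

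\textbf{Semi-discrete part.} The continuous-to-semi-discrete error $\|\b{u}_{Red}-\Kk^2\vec{\b{u}}^{\kappa_k}_{Red}\|_{C^0([0,T],L^\infty(\Omega)^{2d})}$ is handled by the preceding Corollary on semi-discrete convergence with Dirichlet conditions. That corollary gives convergence in general, and $O(\kappa)$ under the rate assumptions.

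\textbf{A priori bounds.} I redo Lemma \ref{lemm:aprioriboundssemidiscrete} for the reduced Dirichlet system. The right-hand side is $\vec{G}^D_\kappa((\phi,\psi),t)=(\psi(0),\vec{F}_\kappa(\phi)+\vec{\b{b}}^D_\kappa(\phi)+\vec{\b{b}}_\kappa(t))$. Both $\vec{F}_\kappa(0)$ and $\vec{\b{b}}^D_\kappa(0)$ are bounded uniformly in $\kappa$: by $\scalarlaw^+$ times the discrete kernel sums of Lemma \ref{lemm:unifbndinteg}, including the $\hat{V}_j$ sum, which is bounded in the same way since cells of $\Omega_I$ obey the same geometry estimate (\ref{eqn:bndnorm}). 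Hence $\|\vec{G}^D_\kappa(\Phi,t)\|\le (L_{\vec F}+L_D+1)\|\Phi\|+R'$. Gr\"onwall then gives uniform bounds $M_0,M_1$, so the semi-discrete solution is Lipschitz in time uniformly in $\kappa$.

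\textbf{Consistency residual and initial error.} For $t\in[t_{n-1},t_n]$ I compare $\vec{G}^D_\kappa((\vec{\b{u}}^\kappa_{Red})_t,t)$ with $\hat{G}^D_h((\vec{\b{u}}^\kappa_{Red})_{t_n},t_{n-1})$, using $(\tau_{-h}\phi)_{t_n}=\phi_{t_{n-1}}$. This bounds $\|\hat{K}^{N,n}_\kappa\|_\infty$ by $M_1h+(L_{\vec F}+L_D)M_1h+\epsilon$, where $\epsilon$ comes from the modulus of continuity of $\vec{\b{b}}_\kappa$ as in (\ref{eqn:inputappr}). This is $O(h+\kappa)$ when $\b{b}$ is Lipschitz and the input rates hold, via (\ref{eqn:sharperboundsinputdata}). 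The initial error satisfies $\|e_0\|\le \tfrac12 h\bigl((L_{\vec F}+L_D)M_0+R'\bigr)$. The interpolation estimate (\ref{eqn:IPred}) carries over unchanged, with $M^N_\kappa\in O(h)$.

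\textbf{Recursion and conclusion.} Lemma \ref{lemm:stabineqdirich}, which follows by subtracting the reduced form of (\ref{eqn:discrprobdirich}) from (\ref{eqn:defstabresdirich}), feeds the same recursion (\ref{eqn:defan}) with $L_{\vec F}$ replaced by $L_{\vec F}+L_D$. Solving it explicitly and using $\bigl((1+hL)/(1-h)\bigr)^{N}\to e^{T(1+L)}$ gives $\max_n\|\vec{\b{u}}^{\kappa_k}_{Red}(t_n)-\hat{\b{u}}^{n,N_k}_{\kappa_k}\|_\infty\to0$, and $O(\kappa+h)$ under the rate assumptions. The triangle inequality with the semi-discrete estimate, together with $\Kk$ being an isometry, yields both claims.

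\textbf{Main obstacle.} The only nonroutine point is the uniformity in $\kappa$ of $L_D$ and of $\|\vec{\b{b}}^D_\kappa(0)\|$. This requires a Lemma \ref{lemm:unifbndinteg}-type bound for sums over $\hat{I}^\kappa$ including $j$ with $\hat{j}$ in the same cell as $i$. Such $j$ are excluded by $j\neq i$, except for duplicated indices. If necessary, I would treat the single self-cell term $\hat{V}_i\omega_\delta(0^+)$ separately. It cannot be bounded by $\|x_j-x_i\|^{-1}$, but it is not present in the sum as written, since the index restriction is ${j}\neq i$, so no singular term arises.
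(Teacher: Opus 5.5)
Your proposal is correct and follows the paper's route: the paper also treats $\vec{\b{b}}^D_\kappa$ as a second $\vec{F}_\kappa$-type term. It uses the uniform Lipschitz constant $L_D$ and the semi-discrete Dirichlet corollary, and replaces $L_{\vec{F}}$ by $L_{\vec{F}}+L_D$ in the stability inequality (Lemma \ref{lemm:stabineqdirich}) before rerunning the argument of Theorem \ref{theo:fullconv}; your a priori, consistency-residual and initial-error steps just spell out what the paper leaves implicit, and your a priori bound can be simplified by noting $\vec{F}_\kappa(0)=\vec{\b{b}}^D_\kappa(0)=0$, since all bond stretches vanish at zero displacement and $\scalarlaw(0)=0$.
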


\section{Acknowledgments}
This research was funded by the Deutsche Forschungsgemeinschaft (DFG, German Research Foundation) - 377472739/GRK 2423/2-2023. The authors are very grateful for this support. 

The authors declare that they have no conflict of interest.

\bibliographystyle{plainurl}
\addcontentsline{toc}{section}{References}
\bibliography{biblio}

@article{SillRef,
  author  = "S.A. Silling",
  title   = "Reformulation of elasticity theory for
discontinuities and long-range forces",
  journal = "Journal of the Mechanics and Physics of Solids",
  year    = 2000,
  volume  = "48",
  pages   = "175--209"
}

@Article{Silling2007,
author={Silling, S. A.
and Epton, M.
and Weckner, O.
and Xu, J.
and Askari, E.},
title={Peridynamic States and Constitutive Modeling},
journal={Journal of Elasticity},
year={2007},
month={Aug},
day={01},
volume={88},
number={2},
pages={151-184},
issn={1573-2681},
doi={10.1007/s10659-007-9125-1},
url={https://doi.org/10.1007/s10659-007-9125-1}
}

@article{SillMeshFree,
title = {A meshfree method based on the peridynamic model of solid mechanics},
journal = {Computers \& Structures},
volume = {83},
number = {17},
pages = {1526-1535},
year = {2005},
note = {Advances in Meshfree Methods},
issn = {0045-7949},
doi = {https://doi.org/10.1016/j.compstruc.2004.11.026},
url = {https://www.sciencedirect.com/science/article/pii/S0045794905000805},
author = {S.A. Silling and E. Askari}
}

@article{CPDSOURCE,
title = {Continuum-kinematics-inspired peridynamics. {M}echanical problems},
journal = {Journal of the Mechanics and Physics of Solids},
volume = {131},
pages = {125-146},
year = {2019},
issn = {0022-5096},
doi = {https://doi.org/10.1016/j.jmps.2019.06.016},
url = {https://www.sciencedirect.com/science/article/pii/S0022509619300456},
author = {A. Javili and A.T. McBride and P. Steinmann}
}

@Article{CPDFRAC,
title = {Dynamic fracture with continuum-kinematics-based peridynamics},
journal = {AIMS Materials Science},
volume = {9},
number = {6},
pages = {791-807},
year = {2022},
issn = {2372-0484},
doi = {10.3934/matersci.2022049},
url = {https://www.aimspress.com/article/doi/10.3934/matersci.2022049},
author = {Kai Friebertshäuser and Christian Wieners and Kerstin Weinberg},
}

@Article{Partmann2023,
author={Partmann, Kai
and Wieners, Christian
and Weinberg, Kerstin},
title={Continuum-kinematics-based peridynamics and phase-field approximation of non-local dynamic fracture},
journal={International Journal of Fracture},
year={2023},
month={Dec},
day={01},
volume={244},
number={1},
pages={187-200},
issn={1573-2673},
doi={10.1007/s10704-023-00726-7},
url={https://doi.org/10.1007/s10704-023-00726-7}
}

@Article{lipton2015cohesive,
author={Lipton, Robert},
title={Cohesive Dynamics and Brittle Fracture},
journal={Journal of Elasticity},
year={2016},
month={Aug},
day={01},
volume={124},
number={2},
pages={143-191},
issn={1573-2681},
doi={10.1007/s10659-015-9564-z},
url={https://doi.org/10.1007/s10659-015-9564-z}
}

@Article{lipton2020nonlocal,
author={Lipton, Robert P.
and Jha, Prashant K.},
title={Nonlocal elastodynamics and fracture},
journal={Nonlinear Differential Equations and Applications NoDEA},
year={2021},
month={Mar},
day={17},
volume={28},
number={3},
pages={23},
issn={1420-9004},
doi={10.1007/s00030-021-00683-x},
url={https://doi.org/10.1007/s00030-021-00683-x}
}

@Inbook{Lipton2019DynamicBF,
author="Lipton, Robert
and Said, Eyad
and Jha, Prashant K.",
title="Dynamic Brittle Fracture from Nonlocal Double-Well Potentials: A State-Based Model",
bookTitle="Handbook of Nonlocal Continuum Mechanics for Materials and Structures",
year="2019",
publisher="Springer International Publishing",
address="Cham",
pages="1265--1291",
isbn="978-3-319-58729-5",
doi="10.1007/978-3-319-58729-5_33",
url="https://doi.org/10.1007/978-3-319-58729-5_33"
}

@article{JAVILI2025104563,
title = {A nonlinear correspondence model for three-dimensional continuum-kinematics-inspired peridynamics},
journal = {Mechanics Research Communications},
volume = {150},
pages = {104563},
year = {2025},
issn = {0093-6413},
doi = {https://doi.org/10.1016/j.mechrescom.2025.104563},
url = {https://www.sciencedirect.com/science/article/pii/S009364132500196X},
author = {A. Javili and E. Ekiz and P. Steinmann}
}

@Article{Javili2025,
author={Javili, Ali
and Ekiz, Ekim
and Steinmann, Paul},
title={A Geometrically Nonlinear Correspondence Model for Continuum-Kinematics-Inspired Peridynamics},
journal={Journal of Peridynamics and Nonlocal Modeling},
year={2025},
month={Mar},
day={19},
volume={7},
number={1},
pages={6},
issn={2522-8978},
doi={10.1007/s42102-025-00129-3},
url={https://doi.org/10.1007/s42102-025-00129-3}
}

@Article{Ha2010,
author={Ha, Youn Doh
and Bobaru, Florin},
title={Studies of dynamic crack propagation and crack branching with peridynamics},
journal={International Journal of Fracture},
year={2010},
month={Mar},
day={01},
volume={162},
number={1},
pages={229-244},
issn={1573-2673},
doi={10.1007/s10704-010-9442-4},
url={https://doi.org/10.1007/s10704-010-9442-4}
}

@article{KHOSRAVANI2022103899,
title = {On the use of peridynamics in fracture of ultra-high performance concrete},
journal = {Mechanics Research Communications},
volume = {123},
pages = {103899},
year = {2022},
issn = {0093-6413},
doi = {https://doi.org/10.1016/j.mechrescom.2022.103899},
url = {https://www.sciencedirect.com/science/article/pii/S0093641322000532},
author = {Mohammad Reza Khosravani and Kai Friebertshäuser and Kerstin Weinberg}
}

@Article{Partmann2024,
author={Partmann, Kai
and Dienst, Manuel
and Weinberg, Kerstin},
title={Peridynamic computations of wave propagation and reflection at material interfaces},
journal={Archive of Applied Mechanics},
year={2024},
month={Sep},
day={01},
volume={94},
number={9},
pages={2405-2416},
issn={1432-0681},
doi={10.1007/s00419-024-02646-x},
url={https://doi.org/10.1007/s00419-024-02646-x}
}

@Article{Bobaru2015Cracks,
author={Bobaru, Florin
and Zhang, Guanfeng},
title={Why do cracks branch? {A} peridynamic investigation of dynamic brittle fracture},
journal={International Journal of Fracture},
year={2015},
month={Nov},
day={01},
volume={196},
number={1},
pages={59-98},
issn={1573-2673},
doi={10.1007/s10704-015-0056-8},
url={https://doi.org/10.1007/s10704-015-0056-8}
}

@Article{Willberg2023,
author={Willberg, Christian
and Hesse, Jan-Timo
and Heinecke, Falk},
title={Peridynamic Simulation of a Mixed-Mode Fracture Experiment in PMMA Utilizing an Adaptive-Time Stepping for an Explicit Solver},
journal={Journal of Peridynamics and Nonlocal Modeling},
year={2023},
month={Jun},
day={01},
volume={5},
number={2},
pages={205-228},
issn={2522-8978},
doi={10.1007/s42102-021-00079-6},
url={https://doi.org/10.1007/s42102-021-00079-6}
}

@Article{Diehl2019,
author={Diehl, Patrick
and Prudhomme, Serge
and L{\'e}vesque, Martin},
title={A Review of Benchmark Experiments for the Validation of Peridynamics Models},
journal={Journal of Peridynamics and Nonlocal Modeling},
year={2019},
month={Apr},
day={01},
volume={1},
number={1},
pages={14-35},
issn={2522-8978},
doi={10.1007/s42102-018-0004-x},
url={https://doi.org/10.1007/s42102-018-0004-x}
}

@article{BellidoWellposed,
    author = {Bellido, Jos\'{e} C. and Mora-Corral, Carlos},
    title = {Existence for Nonlocal Variational Problems in Peridynamics},
    journal = {SIAM Journal on Mathematical Analysis},
    volume = {46},
    number = {1},
    pages = {890-916},
    year = {2014},
    doi = {10.1137/130911548},
    URL = {https://doi.org/10.1137/130911548 }
}

@article{CocliteWellpsdNONLINPD,
title = "Wellposedness of a nonlinear peridynamic model",
author = "Coclite, {Giuseppe Maria} and Serena Dipierro and Francesco Maddalena and Enrico Valdinoci",
year = "2019",
month = jan,
doi = "10.1088/1361-6544/aae71b",
language = "English",
volume = "32",
pages = "1--21",
journal = "Nonlinearity",
issn = "0951-7715",
publisher = "IOP Publishing",
number = "1",
}

@Article{WP_FRAC,
author={Du, Qiang
and Tao, Yunzhe
and Tian, Xiaochuan},
title={A Peridynamic Model of Fracture Mechanics with Bond-Breaking},
journal={Journal of Elasticity},
year={2018},
month={Aug},
day={01},
volume={132},
number={2},
pages={197-218},
issn={1573-2681},
doi={10.1007/s10659-017-9661-2},
url={https://doi.org/10.1007/s10659-017-9661-2}
}

@article{Mengesha2014,
author={Mengesha, Tadele
and Du, Qiang},
title={Nonlocal Constrained Value Problems for a Linear Peridynamic {N}avier Equation},
journal={Journal of Elasticity},
year={2014},
month={Jun},
day={01},
volume={116},
number={1},
pages={27-51},
issn={1573-2681},
doi={10.1007/s10659-013-9456-z},
url={https://doi.org/10.1007/s10659-013-9456-z}
}

@article{Meng2014BB, title={The bond-based peridynamic system with {D}irichlet-type volume constraint}, volume={144}, DOI={10.1017/S0308210512001436}, number={1}, journal={Proceedings of the Royal Society of Edinburgh: Section A Mathematics}, author={Mengesha, Tadele and Du, Qiang}, year={2014}, pages={161–186}}

@Article{Emmrich2016,
author={Emmrich, Etienne
and Puhst, Dimitri},
title={A Short Note on Modeling Damage in Peridynamics},
journal={Journal of Elasticity},
year={2016},
month={Apr},
day={01},
volume={123},
number={2},
pages={245-252},
issn={1573-2681},
doi={10.1007/s10659-015-9550-5},
url={https://doi.org/10.1007/s10659-015-9550-5}
}

@article{OnWellPsdLinPD,
author = {Etienne Emmrich and Olaf Weckner},
title = {{On the well-posedness of the linear peridynamic model and its convergence towards the Navier equation of linear elasticity}},
volume = {5},
journal = {Communications in Mathematical Sciences},
number = {4},
publisher = {International Press of Boston},
pages = {851 -- 864},
year = {2007},
}

@article{nonlocveccalclong,
    author = {Du, Qiang and Gunzburger, Max and Lehoucq, R. B. and Zhou, Kun},
    title = {A NONLOCAL VECTOR CALCULUS, NONLOCAL VOLUME-CONSTRAINED PROBLEMS, AND NONLOCAL BALANCE LAWS},
    journal = {Mathematical Models and Methods in Applied Sciences},
    volume = {23},
    number = {03},
    pages = {493-540},
    year = {2013},
    doi = {10.1142/S0218202512500546},
    URL = {https://doi.org/10.1142/S0218202512500546}
}

@article{DuConv,
 ISSN = {00361429},
 URL = {http://www.jstor.org/stable/41149024},
 author = {Kun Zhou and Qiang Du},
 journal = {SIAM Journal on Numerical Analysis},
 number = {5},
 pages = {1759--1780},
 publisher = {Society for Industrial and Applied Mathematics},
 title = {MATHEMATICAL AND NUMERICAL ANALYSIS OF LINEAR PERIDYNAMIC MODELS WITH NONLOCAL BOUNDARY CONDITIONS},
 urldate = {2025-12-10},
 volume = {48},
 year = {2010}
}

@article{Emmrich2013WellposednessOT,
  title={Well-posedness of the peridynamic model with {L}ipschitz continuous pairwise force function},
  author={Etienne Emmrich and Dimitri Puhst},
  journal={Communications in Mathematical Sciences},
  year={2013},
  volume={11},
  pages={1039-1049},
  url={https://api.semanticscholar.org/CorpusID:56261348}
}

@article{EmmWeakMeasu,
    author = {Emmrich, Etienne and Puhst, Dimitri},    
    doi = {10.1088/0951-7715/28/1/285},
    url = {https://dx.doi.org/10.1088/0951-7715/28/1/285},
    year = {2014},
    month = {dec},
    publisher = {IOP Publishing},
    volume = {28},
    number = {1},
    pages = {285},
    title = {Measure-valued and weak solutions to the nonlinear peridynamic model in nonlocal elastodynamics},
    journal = {Nonlinearity},
}

@article{bromley2026blendedapproachevolvingphase,
title = {A blended approach for evolving phase fields using peridynamics: Cyclic loading in quasi-brittle fracture},
journal = {Journal of the Mechanics and Physics of Solids},
volume = {216},
pages = {106771},
year = {2026},
issn = {0022-5096},
doi = {https://doi.org/10.1016/j.jmps.2026.106771},
url = {https://www.sciencedirect.com/science/article/pii/S0022509626002723},
author = {Hayden Bromley and Robert Lipton}
}

@article{ConvStudiesInMeshFree,
title = {Convergence studies in meshfree peridynamic simulations},
journal = {Computers \& Mathematics with Applications},
volume = {71},
number = {11},
pages = {2432-2448},
year = {2016},
note = {Proceedings of the conference on Advances in Scientific Computing and Applied Mathematics. A special issue in honor of Max Gunzburger’s 70th birthday},
issn = {0898-1221},
doi = {https://doi.org/10.1016/j.camwa.2015.12.021},
url = {https://www.sciencedirect.com/science/article/pii/S0898122115005891},
author = {Pablo Seleson and David J. Littlewood}
}

@article{TianDuAnalysisApprox,
author = {Tian, Xiaochuan and Du, Qiang},
title = {Analysis and Comparison of Different Approximations to Nonlocal Diffusion and Linear Peridynamic Equations},
journal = {SIAM Journal on Numerical Analysis},
volume = {51},
number = {6},
pages = {3458-3482},
year = {2013},
doi = {10.1137/13091631X},
URL = {  https://doi.org/10.1137/13091631X}
}

@article{CHEN20111237,
title = {Continuous and discontinuous finite element methods for a peridynamics model of mechanics},
journal = {Computer Methods in Applied Mechanics and Engineering},
volume = {200},
number = {9},
pages = {1237-1250},
year = {2011},
issn = {0045-7825},
doi = {https://doi.org/10.1016/j.cma.2010.10.014},
url = {https://www.sciencedirect.com/science/article/pii/S0045782510002926},
author = {X. Chen and Max Gunzburger}
}

@article{FinElemApproxNonLoc,
title = {Finite element approximation of nonlocal dynamic fracture models},
journal = {Discrete and Continuous Dynamical Systems - B},
volume = {26},
number = {3},
pages = {1675-1710},
year = {2021},
issn = {1531-3492},
doi = {10.3934/dcdsb.2020178},
url = {https://www.aimsciences.org/article/id/7e1359b9-fe44-49f6-be2f-f88830a1746d},
author = {P. K. Jha and R. Lipton}
}

@Article{SillConv,
author={Silling, S. A.
and Lehoucq, R. B.},
title={Convergence of Peridynamics to Classical Elasticity Theory},
journal={Journal of Elasticity},
year={2008},
month={Oct},
day={01},
volume={93},
number={1},
pages={13-37},
issn={1573-2681},
doi={10.1007/s10659-008-9163-3},
url={https://doi.org/10.1007/s10659-008-9163-3}
}

@Article{Bellido2015,
author={Bellido, Jos{\'e} C.
and Mora-Corral, Carlos
and Pedregal, Pablo},
title={Hyperelasticity as a {$\Gamma$}-limit of peridynamics when the horizon goes to zero},
journal={Calculus of Variations and Partial Differential Equations},
year={2015},
month={Oct},
day={01},
volume={54},
number={2},
pages={1643-1670},
issn={1432-0835},
doi={10.1007/s00526-015-0839-9},
url={https://doi.org/10.1007/s00526-015-0839-9}
}

@article{Bellido2020BondBasedPD,
  title={Bond-Based Peridynamics Does Not Converge to Hyperelasticity as the Horizon Goes to Zero},
  author={Jos{\'e} Carlos Bellido and Javier Cueto and Carlos Mora‐Corral},
  journal={Journal of Elasticity},
  year={2020},
  volume={141},
  pages={273 - 289},
  url={https://api.semanticscholar.org/CorpusID:210116629}
}

@article{ImprovOnePointQuadAlgs,
title = {Improved one-point quadrature algorithms for two-dimensional peridynamic models based on analytical calculations},
journal = {Computer Methods in Applied Mechanics and Engineering},
volume = {282},
pages = {184-217},
year = {2014},
issn = {0045-7825},
doi = {https://doi.org/10.1016/j.cma.2014.06.016},
url = {https://www.sciencedirect.com/science/article/pii/S0045782514002023},
author = {Pablo Seleson}
}

@article{liphabbcohesivenumerical,
 author = {Jha, Prashant K and Lipton, Robert},
 journal = {SIAM Journal on Numerical Analysis},
 number = {2},
 pages = {906--941},
 publisher = {SIAM},
 title = {Numerical analysis of nonlocal fracture models in {H}{\"o}lder space},
 volume = {56},
 year = {2018}
}

@article{COSKUN2025106302,
title = {Quasi-brittle fracture: The blended approach},
journal = {Journal of the Mechanics and Physics of Solids},
volume = {205},
pages = {106302},
year = {2025},
issn = {0022-5096},
doi = {https://doi.org/10.1016/j.jmps.2025.106302},
url = {https://www.sciencedirect.com/science/article/pii/S0022509625002789},
author = {Semsi Coskun and Davood Damircheli and Robert Lipton}
}

@article{BobarFlorConv1DPD,
author = {Bobaru, Florin and Yang, Mijia and Alves, Leonardo Frota and Silling, Stewart A. and Askari, Ebrahim and Xu, Jifeng},
title = {Convergence, adaptive refinement, and scaling in {1D} peridynamics},
journal = {International Journal for Numerical Methods in Engineering},
volume = {77},
number = {6},
pages = {852-877},
doi = {https://doi.org/10.1002/nme.2439},
url = {https://onlinelibrary.wiley.com/doi/abs/10.1002/nme.2439},
year = {2009}
}

@article{XiaoQiangAsymptCompatRobDiscr,
author = {Tian, Xiaochuan and Du, Qiang},
title = {Asymptotically Compatible Schemes for Robust Discretization of Parametrized Problems with Applications to Nonlocal Models},
journal = {SIAM Review},
volume = {62},
number = {1},
pages = {199-227},
year = {2020},
doi = {10.1137/19M1296720},
URL = {https://doi.org/10.1137/19M1296720 }}

@misc{pflug2026convergencediscretizedlinearstatic,
      title={On The Convergence of the Discretized Linear Static State-Based Peridynamic Equations}, 
      author={Lukas Pflug and Michael Stingl and Max Zetzmann},
      year={2026},
      eprint={2603.02916},
      archivePrefix={arXiv},
      primaryClass={math.NA},
      url={https://arxiv.org/abs/2603.02916}, 
}

@Inbook{Seleson2018,
author="Seleson, Pablo
and Littlewood, David J.",
title="Numerical Tools for Improved Convergence of Meshfree Peridynamic Discretizations",
bookTitle="Handbook of Nonlocal Continuum Mechanics for Materials and Structures",
year="2018",
publisher="Springer International Publishing",
address="Cham",
pages="1--27",
isbn="978-3-319-22977-5",
doi="10.1007/978-3-319-22977-5_39-1",
url="https://doi.org/10.1007/978-3-319-22977-5_39-1"
}

@article{MACEK20071169,
title = {Peridynamics via finite element analysis},
journal = {Finite Elements in Analysis and Design},
volume = {43},
number = {15},
pages = {1169-1178},
year = {2007},
issn = {0168-874X},
doi = {https://doi.org/10.1016/j.finel.2007.08.012},
url = {https://www.sciencedirect.com/science/article/pii/S0168874X07001035},
author = {Richard W. Macek and Stewart A. Silling}
}

@article{PARKS2008777,
title = {Implementing peridynamics within a molecular dynamics code},
journal = {Computer Physics Communications},
volume = {179},
number = {11},
pages = {777-783},
year = {2008},
issn = {0010-4655},
doi = {https://doi.org/10.1016/j.cpc.2008.06.011},
url = {https://www.sciencedirect.com/science/article/pii/S0010465508002348},
author = {Michael L. Parks and Richard B. Lehoucq and Steven J. Plimpton and Stewart A. Silling}
}

@article{TRASK2019151,
title = {An asymptotically compatible meshfree quadrature rule for nonlocal problems with applications to peridynamics},
journal = {Computer Methods in Applied Mechanics and Engineering},
volume = {343},
pages = {151-165},
year = {2019},
issn = {0045-7825},
doi = {https://doi.org/10.1016/j.cma.2018.08.016},
url = {https://www.sciencedirect.com/science/article/pii/S004578251830402X},
author = {Nathaniel Trask and Huaiqian You and Yue Yu and Michael L. Parks}
}

@article{NIAZI2021102855,
title = {Crack nucleation in brittle and quasi-brittle materials: A peridynamic analysis},
journal = {Theoretical and Applied Fracture Mechanics},
volume = {112},
pages = {102855},
year = {2021},
issn = {0167-8442},
doi = {https://doi.org/10.1016/j.tafmec.2020.102855},
url = {https://www.sciencedirect.com/science/article/pii/S0167844220304316},
author = {Sina Niazi and Ziguang Chen and Florin Bobaru}
}

@article{JhaNumericalCO,
title = {Numerical convergence of finite difference approximations for state based peridynamic fracture models},
journal = {Computer Methods in Applied Mechanics and Engineering},
volume = {351},
pages = {184-225},
year = {2019},
issn = {0045-7825},
doi = {https://doi.org/10.1016/j.cma.2019.03.024},
url = {https://www.sciencedirect.com/science/article/pii/S0045782519301537},
author = {Prashant K. Jha and Robert Lipton}
}

@book{anainbanachsp,
    author = {Tuomas Hytönen and Jan van Neerven and Mark Veraar and Lutz Weis},
    title = {Analysis in Banach Spaces. Volume I: Martingales and Littlewood-Paley Theory},
    publisher = {Springer},
    address = {Cham},
    year = {2016},
    url = {https://doi.org/10.1007/978-3-319-48520-1}
}

@Article{Scabbia2023,
author={Scabbia, Francesco
and Zaccariotto, Mirco
and Galvanetto, Ugo},
title={Accurate computation of partial volumes in 	{3D} peridynamics},
journal={Engineering with Computers},
year={2023},
month={Feb},
day={01},
volume={39},
number={1},
pages={959-991},
issn={1435-5663},
doi={10.1007/s00366-022-01725-3},
url={https://doi.org/10.1007/s00366-022-01725-3}
}

@article{LIU2024109115,
title = {On the one-point quadrature discretization in peridynamics: A novel perspective from {M}onte {C}arlo integration},
journal = {Computer Physics Communications},
volume = {298},
pages = {109115},
year = {2024},
issn = {0010-4655},
doi = {https://doi.org/10.1016/j.cpc.2024.109115},
url = {https://www.sciencedirect.com/science/article/pii/S0010465524000389},
author = {Hengjie Liu and Ziguang Chen}
}

@article{PrudTreatBnd,
title = {On the treatment of boundary conditions for bond-based peridynamic models},
journal = {Computer Methods in Applied Mechanics and Engineering},
volume = {372},
pages = {113391},
year = {2020},
issn = {0045-7825},
doi = {https://doi.org/10.1016/j.cma.2020.113391},
url = {https://www.sciencedirect.com/science/article/pii/S0045782520305764},
author = {Serge Prudhomme and Patrick Diehl},
}

@article{Le2018,
author={Le, Q. V.
and Bobaru, F.},
title={Surface corrections for peridynamic models in elasticity and fracture},
journal={Computational Mechanics},
year={2018},
month={Apr},
day={01},
volume={61},
number={4},
pages={499-518},
issn={1432-0924},
doi={10.1007/s00466-017-1469-1},
url={https://doi.org/10.1007/s00466-017-1469-1}
}

@article{zhaoGenFNM,
    author = {Jiangming Zhao and Siavash Jafarzadeh and Ziguang Chen and Florin Bobaru},
    title = {Enforcing local boundary conditions in peridynamic models of diffusion with singularities and on arbitrary domains},
    journal = {Engineering with Computers},
    year = 2024,
    doi={10.1007/s00366-024-01995-z}
}

@article{novelBCinPD,
author = {Scabbia, Francesco and Zaccariotto, Mirco and Galvanetto, Ugo},
title = {A novel and effective way to impose boundary conditions and to mitigate the surface effect in state-based Peridynamics},
journal = {International Journal for Numerical Methods in Engineering},
volume = {122},
number = {20},
pages = {5773-5811},
doi = {https://doi.org/10.1002/nme.6773},
url = {https://onlinelibrary.wiley.com/doi/abs/10.1002/nme.6773},
year = {2021}
}

@article{TupekRadovitzky2014,
title = {An extended constitutive correspondence formulation of peridynamics based on nonlinear bond-strain measures},
journal = {Journal of the Mechanics and Physics of Solids},
volume = {65},
pages = {82-92},
year = {2014},
issn = {0022-5096},
doi = {https://doi.org/10.1016/j.jmps.2013.12.012},
url = {https://www.sciencedirect.com/science/article/pii/S0022509614000027},
author = {M.R. Tupek and R. Radovitzky}
}

@article{GuanGunzNonlocStabAcc,
author = {Guan, Qingguang and Gunzburger, Max},
title = {Stability and accuracy of time-stepping schemes and dispersion relations for a nonlocal wave equation},
journal = {Numerical Methods for Partial Differential Equations},
volume = {31},
number = {2},
pages = {500-516},
doi = {https://doi.org/10.1002/num.21931},
url = {https://onlinelibrary.wiley.com/doi/abs/10.1002/num.21931},
year = {2015}
}

@Article{Lipton2018,
author={Lipton, Robert
and Said, Eyad
and Jha, Prashant},
title={Free Damage Propagation with Memory},
journal={Journal of Elasticity},
year={2018},
month={Dec},
day={01},
volume={133},
number={2},
pages={129-153},
issn={1573-2681},
doi={10.1007/s10659-018-9672-7},
url={https://doi.org/10.1007/s10659-018-9672-7}
}

@article{YE20071075,
title = {A generalized {G}ronwall inequality and its application to a fractional differential equation},
journal = {Journal of Mathematical Analysis and Applications},
volume = {328},
number = {2},
pages = {1075-1081},
year = {2007},
issn = {0022-247X},
doi = {https://doi.org/10.1016/j.jmaa.2006.05.061},
url = {https://www.sciencedirect.com/science/article/pii/S0022247X06005956},
author = {Haiping Ye and Jianming Gao and Yongsheng Ding}
}

\appendix
\section{Appendix}
\subsection{Proofs for Section \ref{sec:SemiDiscrSystem}}
\begin{lemma}[Uniform Bounds on Discrete Integrals with Partial Area Weights]\label{lemm:alt34weights}
For all $i\in I^\kappa$ and $x\in \Omega_i$,
\begin{align*}
        &0\leq \sum_{j\not=i} \int_{\Omega_j}\frac{\omega_\delta^{ij}}{\|x'-x\|_2} \dd x' \leq \boundkerdiscrinteg,\\
        &0\leq \sum_{j\not=i} V_j \frac{\omega_\delta^{ij}}{\|x_j-x_i\|_2}\leq \boundkerdiscrinteg,
\end{align*}
for a constant $\boundkerdiscrinteg>0$ independent of $\delta/\sqrt{d}> \kappa>0$.
\end{lemma}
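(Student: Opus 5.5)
The plan is to reduce the weighted case to the unweighted estimate of Lemma \ref{lemm:unifbndinteg}, replacing the cutoff $\chi_{[0,\delta]}$ by a slightly enlarged one. Write $\omega^{ij}_\delta = w_{ij}\bar{\omega}(\|x_i-x_j\|_2)$ with $\bar{\omega}$ non-increasing and $\omega_\delta = \bar\omega$ on $(0,\delta]$. By Assumptions \ref{ass:quadweights} (items 1 and 3), we have $0\leq w_{ij}\leq \chi_{[0,\delta+\sqrt{d}\kappa/2)}(\|x_i-x_j\|_2)$. Since $\kappa<\delta/\sqrt{d}$, this gives
\begin{equation*}
0\leq \omega^{ij}_\delta \leq \tilde{\omega}(\|x_i-x_j\|_2), \qquad \tilde{\omega} := \bar{\omega}\,\chi_{(0,\tfrac{3}{2}\delta]}.
\end{equation*}
The function $\tilde\omega$ is non-increasing, measurable, and supported in $(0,\tfrac32\delta]$. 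Nonnegativity of both sums is immediate from $w_{ij}\geq 0$.

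Next, I would control $\tilde{\mathcal{L}} := \int_{B_{3\delta/2}(0)} \tilde\omega(\|\zeta\|_2)/\|\zeta\|_2 \dd\zeta$ by a multiple of $\boundkerinteg$. On $\|\zeta\|_2\leq\delta$ the integrand equals that of $\boundkerinteg$. On the shell $\delta<\|\zeta\|_2\leq \tfrac32\delta$, monotonicity gives $\bar\omega(\|\zeta\|_2)\leq \bar\omega(\delta)=\omega_\delta(\delta)$. In turn, $\omega_\delta(\delta)$ is bounded by the average of $\omega_\delta$ over the annulus $\delta/2\leq r\leq\delta$, and hence by $C_d\,\delta^{-(d-1)}\boundkerinteg$ using polar coordinates. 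The shell contributes at most $C\delta^{d-1}\omega_\delta(\delta)$. Hence $\tilde{\mathcal L}\leq C_d'\boundkerinteg$ with $C_d'$ depending only on $d$.

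This bound on $\bar\omega$ just outside $\delta$ is the one delicate point, because the weights allow bonds slightly beyond the horizon and $\omega_\delta$ itself vanishes there. If the convention is instead that $\bar\omega$ is merely some non-increasing extension, I would require or note that $\bar\omega$ is locally integrable against $1/\|\zeta\|_2$ on $B_{3\delta/2}(0)$. This finiteness is the actual input, and the constant may then depend on $\bar\omega$ but not on $\kappa$.

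With $\tilde\omega$ in hand, I would repeat the proof of Lemma \ref{lemm:unifbndinteg} verbatim with $\omega_\delta$ replaced by $\tilde\omega$ and $\delta$ by $\tfrac32\delta$. Using (\ref{eqn:bndnorm}), i.e. $\|x'-x\|_2\leq(\sqrt d+1)\|x_i-x_j\|_2$ for $i\neq j$, the monotonicity of $\tilde\omega$, and the change of variables, we obtain
\begin{equation*}
\sum_{j\neq i}V_j\frac{\omega^{ij}_\delta}{\|x_j-x_i\|_2}\leq(\sqrt d+1)\sum_{j\neq i}\int_{\Omega_j}\frac{\tilde\omega(\|x_j-x_i\|_2)}{\|x'-x\|_2}\dd x'\leq(\sqrt d+1)^d\tilde{\mathcal L}.
\end{equation*}
The first claimed inequality is the middle expression, so it is bounded by the same quantity. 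This gives both bounds with $\boundkerdiscrinteg:=(\sqrt d+1)^{d+1}C_d'\boundkerinteg$, independent of $\kappa$.
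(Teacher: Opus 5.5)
Your proof is correct, but it takes a different route from the paper. The paper argues perturbatively. By Assumptions \ref{ass:quadweights}, $\omega^{ij}_\delta$ and $\omega_\delta(\|x_j-x_i\|_2)$ can differ only on cells $\Omega_j$ lying in the thin shell $B_{\delta+\sqrt{d}\kappa}(x_i)\setminus B_{\delta-\sqrt{d}\kappa}(x_i)$. The paper therefore bounds the difference between the weighted and unweighted sums by $\omega_\delta(\delta-\sqrt{d}\kappa)\,\vol(B_{\delta+\sqrt{d}\kappa}(0)\setminus B_{\delta-\sqrt{d}\kappa}(0))/(\delta-\sqrt{d}\kappa)$, which vanishes as $\kappa\searrow 0$. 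The triangle inequality with Lemma \ref{lemm:unifbndinteg} then finishes. You instead dominate $\omega^{ij}_\delta$ pointwise by the enlarged non-increasing kernel $\tilde\omega=\bar\omega\chi_{(0,3\delta/2]}$, control the extra shell via $\bar\omega\leq\omega_\delta(\delta)\leq C_d\delta^{1-d}\boundkerinteg$, and rerun the proof of Lemma \ref{lemm:unifbndinteg}.

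Both arguments rest on the same input: monotonicity of $\bar\omega$ past $\delta$, together with $\bar\omega\equiv\omega_\delta$ on $(0,\delta]$. The paper uses this implicitly when it bounds $\omega^{ij}_\delta$ on the shell by $\omega_\delta(\delta-\sqrt{d}\kappa)$. So your fallback caveat is unnecessary under the stated convention.

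Your route gives a constant, proportional to $\boundkerinteg$, that is genuinely uniform on the whole range $0<\kappa<\delta/\sqrt{d}$. The paper's shell bound degenerates as $\kappa\nearrow\delta/\sqrt{d}$, since $\omega_\delta(\delta-\sqrt{d}\kappa)$ and $1/(\delta-\sqrt{d}\kappa)$ may both diverge. Moreover, the lower bound $\|x'-x\|_2\geq\delta-\sqrt{d}\kappa$ used there should read $\delta-\tfrac{3}{2}\sqrt{d}\kappa$ for $x\in\Omega_i$, which is not positive for all admissible $\kappa$. So the paper's argument really delivers uniformity only for small $\kappa$, which is all that is needed downstream. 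In exchange, the paper's route shows that the partial-area weights perturb the sums by an amount tending to zero with $\kappa$. This is the same shell mechanism reused in Lemma \ref{lemm:alt39weights}.
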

\begin{proof}
We deal with these proofs by using the Assumptions \ref{ass:quadweights}, more precisely, the fact that $\omega^{ij}_\delta$ and $\omega_\delta(\|x_j-x_i\|_2)$ can only differ on the volume $B_{\delta + \sqrt{d}\kappa}(x_i)\backslash B_{\delta - \sqrt{d}\kappa}(x_i)$. This follows as the only $\Omega_j$ on which the values differ must fulfill $ \delta + \sqrt{d}/2\kappa \geq \|x_i-x_j\|_2 \geq \delta - \sqrt{d}/2\kappa$, therefore $\Omega_j \subset B_{\delta + \sqrt{d}\kappa}(x_i)\backslash B_{\delta - \sqrt{d}\kappa}(x_i)$ due to $\diam(B_j) = \sqrt{d}\kappa$. We then obtain
\begin{align*}
    &\Bigg|\sum_{j\not=i} \int_{\Omega_j}\frac{\omega_\delta^{ij}}{\|x'-x\|_2} \dd x' - \sum_{j\not=i} \int_{\Omega_j}\frac{\omega_\delta(\|x_i-x_j\|_2)}{\|x'-x\|_2} \dd x'\Bigg| \\
    &\leq \omega_\delta(\delta - 
\sqrt{d}\kappa) \vol(B_{\delta + \sqrt{d}\kappa}(0)\backslash B_{\delta - \sqrt{d}\kappa}(0)) \frac{1}{\delta - \sqrt{d}\kappa} \xrightarrow[]{\kappa \searrow 0} 0,
\end{align*}
independently of $i\in I^\kappa$.
The convergence follows from the fact that $\omega_\delta$ is non-increasing and finite on $(0,\delta]$. Using the triangle inequality and Lemma \ref{lemm:unifbndinteg}, yields the claim. \par
The other integral follows similarly, as again, (2) of Assumptions \ref{ass:quadweights} implies that if $w^{ij}$ and $\omega_\delta(\|x_i-x_j\|_2)$ differ, then $\|x_i-x_j\|\geq \delta - \sqrt{d}\kappa/2$.
\end{proof}

\begin{lemma}[Error Bound on the Discrete Weighting Function with Partial Area Weights] \label{lemm:alt39weights}
Let $i\in I^\kappa$, then, for all $x\in \Omega_i$,
\begin{equation*}
   \sum_{i\not=j}\int_{\Omega_j} |\omega_\delta^{ij}-\omega_\delta(\|x'-x\|_2)| \dd x' \leq D_\kappa,
\end{equation*}
with $0<D_\kappa< \infty$ independent of $x$ and $i$ as well as $ O(\kappa) \ni D_\kappa \searrow 0$ as $\kappa \searrow 0$, with $0< \kappa < \delta/\sqrt{d}$.\end{lemma}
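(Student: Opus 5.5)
The plan is to reduce to Lemma \ref{lemm:discrweighterrbnd} by the triangle inequality, inserting the unweighted discrete kernel:
\begin{equation*}
|\omega_\delta^{ij}-\omega_\delta(\|x'-x\|_2)| \leq |\omega_\delta^{ij}-\omega_\delta(\|x_j-x_i\|_2)| + |\omega_\delta(\|x_j-x_i\|_2)-\omega_\delta(\|x'-x\|_2)|.
\end{equation*}
After summing over $j\neq i$ and integrating over $\Omega_j$, the second term is bounded by the $D_\kappa\in O(\kappa)$ of Lemma \ref{lemm:discrweighterrbnd}. For $\kappa<\delta/(2\sqrt{d})$ that lemma applies directly. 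For $\delta/(2\sqrt{d})\leq\kappa<\delta/\sqrt{d}$ it is only needed to show the sum is finite, and this follows from the crude bound $2\boundkerdiscrinteg\delta$, using $\omega_\delta(r)\leq \delta\,\omega_\delta(r)/r$ on $(0,\delta]$ together with Lemma \ref{lemm:unifbndinteg}.

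The main work is the first term, and the argument mirrors the proof of Lemma \ref{lemm:alt34weights}. I write $\omega_\delta(\|x_j-x_i\|_2)=\chi_{[0,\delta]}(\|x_j-x_i\|_2)\bar\omega(\|x_j-x_i\|_2)$ and $\omega^{ij}_\delta = w_{ij}\bar\omega(\|x_j-x_i\|_2)$, so the difference is $|w_{ij}-\chi_{[0,\delta]}(\|x_j-x_i\|_2)|\,\bar\omega(\|x_j-x_i\|_2)$. By items 1 and 2 of Assumptions \ref{ass:quadweights}, this vanishes unless
\begin{equation*}
\delta-\sqrt{d}\kappa/2<\|x_j-x_i\|_2<\delta+\sqrt{d}\kappa/2 .
\end{equation*}
By item 3 it is bounded by $\bar\omega(\|x_j-x_i\|_2)\leq\bar\omega(\delta-\sqrt{d}\kappa/2)$, using that $\bar\omega$ is non-increasing. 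Since $x\in\Omega_i$ and $\diam B_j=\sqrt d\kappa$, every contributing $\Omega_j$ lies in the shell $B_{\delta+\sqrt d\kappa}(x)\setminus B_{\delta-\sqrt d\kappa}(x)$. The first term is therefore at most
\begin{equation*}
\bar\omega(\delta-\sqrt{d}\kappa/2)\,\vol\big(B_{\delta+\sqrt d\kappa}(0)\setminus B_{\delta-\sqrt d\kappa}(0)\big).
\end{equation*}
The shell volume is $O(\kappa)$, being bounded by $c_d\,2\sqrt d\kappa(\delta+\sqrt d\kappa)^{d-1}$. This bound is independent of $x$ and $i$.

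The expected obstacle is the factor $\bar\omega(\delta-\sqrt d\kappa/2)$, which must stay bounded uniformly as $\kappa\searrow0$. Monotonicity gives $\bar\omega(\delta-\sqrt d\kappa/2)\leq\bar\omega(\delta/2)$ as soon as $\sqrt{d}\kappa\leq\delta$. Finiteness of $\bar\omega(\delta/2)$ follows from the integrability condition (\ref{eqn:boundcond}): if $\bar\omega(\delta/2)$ were infinite, then $\omega_\delta$ would be infinite on $(0,\delta/2]$, contradicting $\boundkerinteg<\infty$. More quantitatively, a non-increasing $\omega_\delta$ satisfies
\begin{equation*}
\bar\omega(\delta/2)\,\frac{\vol(B_{\delta/2}(0))}{\delta/2}\leq\boundkerinteg .
\end{equation*}
This is exactly where the standing range $\kappa<\delta/\sqrt d$ is used. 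Summing the two contributions gives $D_\kappa\in O(\kappa)$, positive and independent of $x$ and $i$; if necessary I replace it by its supremum over smaller $\kappa$ to make it monotone, with $D_\kappa\searrow0$.
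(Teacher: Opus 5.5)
Your proof is correct and follows the paper's argument: you use the triangle inequality through $\omega_\delta(\|x_j-x_i\|_2)$, apply Lemma \ref{lemm:discrweighterrbnd} to one term, and bound the other by a shell volume times the kernel value at the inner radius; you are in fact slightly more careful than the paper about the range $\delta/(2\sqrt d)\leq\kappa<\delta/\sqrt d$ and about the uniform boundedness of that kernel value. One small slip: the contributing $\Omega_j$ lie in the shell $B_{\delta+\sqrt d\kappa}(x_i)\setminus B_{\delta-\sqrt d\kappa}(x_i)$ centred at $x_i$, not at $x$ (around $x$ the radii would be $\delta\pm 3\sqrt d\kappa/2$), but since the integrand is constant on each cell only the shell's volume enters, so your bound is unaffected.
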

\begin{proof} We use a similar argument as in Lemma \ref{lemm:alt34weights}, reducing the argument to bounding the error term 
    \begin{align*}
        &\sum_{i\not=j}\int_{\Omega_j} |\omega_\delta^{ij}-\omega_\delta(\|x_j-x_i\|_2)| \dd x' \\
        &\leq \omega_\delta(\delta - 
\sqrt{d}\kappa) \vol(B_{\delta + \sqrt{d}\kappa}(0)\backslash B_{\delta - \sqrt{d}\kappa}(0))  \in O(\kappa)
    \end{align*}
    The claim now follows from the triangle inequality and Lemma \ref{lemm:discrweighterrbnd}.
\end{proof}

\end{document}